\documentclass[11pt]{amsart}
\usepackage{graphicx}
\usepackage{stmaryrd}
\usepackage{amssymb}
\usepackage{amsthm}
\usepackage{dsfont}
\usepackage{hyperref}
\usepackage{mathrsfs}
\usepackage{stmaryrd}
\usepackage[lite,initials,msc-links]{amsrefs}
\usepackage{amsmath}
\usepackage{centernot}
\usepackage{enumerate}
\usepackage{verbatim}
\usepackage{dsfont}
\usepackage{bm}
\usepackage{geometry}
\usepackage{faktor}
\usepackage{relsize}
\usepackage{centernot}
\usepackage{enumerate}
\usepackage{verbatim}
\usepackage{mathtools}
\usepackage{color,soul}
\usepackage{centernot}
\usepackage[mathscr]{euscript}
\usepackage{caption}

\geometry{dvips,a4paper,margin=1.5in}

\newtheorem{theorem}{Theorem}[section]
\newtheorem{lemma}[theorem]{Lemma}
\newtheorem{proposition}[theorem]{Proposition}
\newtheorem{corollary}[theorem]{Corollary}
\numberwithin{equation}{section}

\theoremstyle{definition}

\theoremstyle{remark}
\newtheorem{remark}{Remark} 
\theoremstyle{question}

\newcommand{\n}{\mathbf{n}}

\newcommand{\s}{\textnormal{s}}

\newcommand{\con}[1]{ \xleftrightarrow{#1}}
\newcommand{\ncon}[1]{ \centernot{\xleftrightarrow{#1}}}

\title[Spins, percolation and height functions]{Spins, percolation and height functions}

\setcounter{tocdepth}{4}
\setcounter{secnumdepth}{4}

\date{\today}

\author{Marcin Lis}{
\address{Faculty of Mathematics\\ University of Vienna \\
Oskar-Morgenstern-Platz 1\\
1090 Wien}
\email{marcin.lis@univie.ac.at}}

\begin{document}

\begin{abstract} 
To highlight certain similarities in combinatorial representations of several well known two-dimensional models of statistical mechanics, we introduce and study a new family of models which specializes to these cases after a proper tuning of the parameters.

To be precise, our model consists of two independent standard Potts models, with possibly different numbers of spins and different coupling constants (the four parameters of the model), 
defined jointly on a graph embedded in a surface and its dual graph, and conditioned
on the event that the primal and dual interfaces between spins of different value do not intersect. We also introduce naturally related height function and bond percolation models, and
we discuss their basic properties and mutual relationship.

As special cases we recover the standard Potts and random cluster model, the six-vertex model and loop $O(n)$ model,
the random current, double random current and XOR-Ising model. 

\end{abstract}

\maketitle
\section*{Introduction}

Combinatorial expansions are ubiquitous in statistical mechanics and a thorough understanding of their interplay often leads to a transfer of information between different models.
In an attempt to capture some of their common features,
we introduce a new family of two-dimensional models with four parameters $(q,q',a,b) \in \{1,2,\ldots \}^2\times (0,1]^2$.
To be precise, for a graph embedded in a surface, we consider three jointly coupled models (each of which comes in a primal and a dual version) in the form of
\begin{itemize}
\item spin models $(\sigma,\sigma')$ defined on the vertices and faces,
\item bond percolation models $(\omega,\omega')$ on the primal and dual edges, 
\item {height functions} $(h,h')$ defined on the vertices and faces. 
\end{itemize}
The starting point is the spin model which is given by a pair of independent primal and dual Potts models with $q$ and $q'$ spins, and coupling constants satisfying $a=e^{-J}$ and $b=e^{-J'}$ respectively, and conditioned on the event that their interfaces do not intersect.
The percolation model is then built on top of the spin model using additional randomness, and the height function is a deterministic function of the spin configuration.
We note that the relationship between the spin and percolation model 
is a generalization of the Edwards--Sokal coupling between Potts and Fortuin--Kasteleyn random cluster models~\cite{Potts,FK,EdwSok}.

Our model includes as special cases the
\begin{itemize}
\item $\textnormal{FK}(qq')$ random cluster model for $a+b=1$, 
\item staggered six-vertex model for $q=q'=2$,
\item loop $O(n)$ model for $q=n$, $q'=2$ and $b=1$,
\item random current model for $q=1$, $q'=2$ and $a^2+b^2=1$, 
\item double random current and XOR-Ising model for $q=q'=2$ and\\ {$a^2+b^2=1$}. 
\end{itemize}
As a result, we give a unified framework for some of the known relations between these models~\cite{Rys,AshTel,Weg,EdwSok,Nie,PfiVel,GriJan,Dub,BoudeT,LupWer,GlaPel,SpiRay}.

We also study the interplay between the three different instances of the model. For example we compare the variance of the height function evaluated at a face with the expected number of clusters of~the percolation model that surround that face. This is used, together with comparison arguments with the random cluster model, to prove that the height function on the square lattice is \emph{localized} 
meaning that its variance is uniformly bounded from above in the size of the system. We do this for two different regimes of parameters:
\begin{itemize}
\item when $a+b=1$, except when $q=q'=2$ and $a=b=1/2$. In the latter case it is shown that the height function \emph{delocalizes}, i.e., has unbounded variance as the system grows. 
This was first proved (for different boundary conditions) in~\cite{GlaPel},
\item when either $a$ or $b$ is small, i.e., $a< ({\sqrt{q'}+1})^{-1}$ or $b< ({\sqrt{q}+1})^{-1}$.
\end{itemize}
These considerations are motivated by and should be compared with the study of the behaviour of the height function of 
the six-vertex model on the square lattice~\cite{Pau,Lieb}. In this article we focus on the staggered version of the model since it fits more naturally into our framework. The staggered model and the standard one actually agree when $a=b$.
It has been proved that in this case the height function is localized for $a<1/2$~\cite{GlaPel}, and it is expected that it delocalizes for all $a\geq 1/2$. The only 
rigorously solved cases in this regime are so far $a=1/2$~\cite{GlaPel}, $a=\sqrt{2}/2$~\cite{Dub,Ken01} and its small neighbourhood~\cite{GMT}, and $a=1$~\cite{CPST}.

A related question in the setting of the more general model introduced here is to find parameters $(q,q',a,b)$ for which the height function on the square lattice delocalizes. A natural 
candidate seems to be (a subset of) the \emph{self-dual} line $q=q'$ and $a=b>1/2$. However, we were unable to prove any result to that effect. Some partial considerations are presented at the end of this article.
One of the major obstacles in the analysis is that the associated percolation model lacks positive association in this regime.

We note that in the special case $q=q'=2$ and $a=b\leq 1/2$
the coupled percolation models $(\omega,\omega')$ studied here were independently introduced by Ray and Spinka in the article \cite{SpiRay} which appeared during the preparation of this manuscript.
For $q=q'=2$, the laws of the marginals on $\omega$ and $\omega'$ are also present in the work of Glazman and Peled~\cite{GlaPel} on the six-vertex model,
and are closely related to the percolation models introduced by Pfister and Velenik~\cite{PfiVel}. 
We also note that the spin models $(\sigma,\sigma')$ are a variant of the interaction-round-a-face model studied in~\cite{BaxterBook,OwczBax}.

One of the novelties of our approach is to study the joint law of $(\omega,\omega',\sigma,\sigma')$ together with the associated height function. For instance, a useful feature of this coupling will be that for $a+b\geq 1$, the two percolation configurations $(\omega,\omega')$ are such that no edge and its dual edge are simultaneously open. This does not hold when $a+b\leq1$ which is the regime studied in~\cite{SpiRay}.

This article is organized as follows: 
\begin{itemize}
\item In Sect.~\ref{sec:model} we define the model and describe the basic relationship between the spin and percolation models.
\item
In Sec.~\ref{sec:other} we show how in special cases we recover other known models of statistical mechanics mentioned above.
\item
In Sect.~\ref{sec:unconstrained}, using duality arguments, we provide an alternative representation of the planar spin model as a classical unconditional spin model which turns out to be a special 
case of the $(N_\alpha,N_\beta)$ model of Domany and Riedel~\cite{DomRie}.
This relation is a generalization of the Ashkin-Teller model representation of the six-vertex model~\cite{Nie,Weg}.
\item
In Sect.~\ref{sec:posass} we review positive association of the spin and percolation model. We obtain it for $\omega$ and $\omega'$ when $a+b\leq 1$,
and for $\sigma$ and $\sigma'$ for all $a,b\in (0,1]$. These are generalizations of the results of~\cite{GlaPel,SpiRay}.
\item
In Sect.~\ref{sec:interplay} we study the interplay between the spin, percolation and height function model in more detail:
We prove the second Griffiths inequality for the spin model in the case when $a+b\leq 1$, and we compare the variance of the difference of the height function between two points with the expected number of clusters of the percolation model that disconnect these two points from each other. 
\item In Sect.~\ref{sec:square} we study the asymptotic behaviour the model defined on the square lattice.
\end{itemize}

{\bf Acknowledgements} I am grateful to Roland Bauerschmidt, Hugo Duminil-Copin and Aran Raoufi for the discussions on the double random current and XOR-Ising model that we had in 2017 at IHES, Bures-sur-Yvette, and that were the inspiration for this work. 
I also thank Alexander Glazman and Ron Peled for their very useful comments and suggestions, and Jacques H.H. Perk for bringing to my attention the model of Domany and Riedel~\cite{DomRie}. 

\section{The model} \label{sec:model}
The basis for our construction will be the Potts model. Let $Q$ be a finite set with $q$ elements.
Recall that for a finite graph $G=(V,E)$ and a coupling constant~$J$, the \emph{$q$-state Potts model}~\cite{Potts} is a probability measure on $Q^V$
given by
\begin{align} \label{def:Potts}
\mu(\s)= \frac1{Z_{G,q}} \exp\Big(-J \sum_{\{v_1,v_2\}\in E} \mathbf 1 \{\s(v_1)\neq \s(v_2)\} \Big), \quad \s \in Q^V,
\end{align}
where ${Z_{G,q}}$ is the \emph{partition function}. To denote the dependence on the parameter, we will write 
$Z_{G,q}=Z_{G,q}(x)$, where $x=e^{J}-1$.
We say that the model is \emph{ferromagnetic} if $J\geq 0$ (or equivalently $x\geq0$) and \emph{antiferromagnetic} if $J\leq 0$ ($-1\leq x\leq 0$).
We note that our definition is the standard one (see e.g.\ \cite{RC}) up to a rescaling of the weight in \eqref{def:Potts} by $e^{-J|E|}$. 

The $q$-state Potts model is directly related to the \emph{$\textnormal{FK}(q)$ random cluster model}~\cite{FK} by the classical \emph{Edwards--Sokal coupling}~\cite{EdwSok},
where for each edge $\{v_1,v_2\}$ satisfying $\s(v_1)=\s(v_2)$, one declares it open with probability $1-e^{-J}$ and independently of other edges. The resulting configuration of open edges $\zeta$
gives rise to a bond percolation model which is the random cluster model. Moreover, in this coupling, conditioned on~$\zeta$, the spins $\s$ can be recovered by choosing a uniform spin from $Q$ independently for each \emph{cluster} of $\zeta$, i.e., a connected component of $(V,\zeta)$, including isolated vertices.

We are now ready to define our model. 
\subsection{Spin model}
Let $M$ be a compact, orientable surface with no boundary, or the plane.
Let $\mathsf G=(\mathsf V, \mathsf E)$ be a finite connected graph embedded in $M$ in such a way that each face is a topological disc, and let $\mathsf G^*=(\mathsf U,\mathsf E^*)$ be its dual also embedded in $M$, where $\mathsf U$ is identified with the set of faces of $\mathsf G$.
For an edge $e\in \mathsf E\cup \mathsf E^*$, we write $e^*\in \mathsf E\cup \mathsf E^*$ for its dual edge. Similarly for $\omega \subseteq \mathsf E\cup \mathsf E^*$, we define $\omega^*=\{ e^*: e\in \omega\}$.

Fix $q,q'\in \{1,2,\ldots \}$ and choose two symmetric sets $ Q, Q' \subset \mathbb C$, i.e., satisfying $Q=-Q$ and $Q'=-Q'$, and such that $|Q|=q$ and $|Q'|=q'$. A \emph{spin configuration} on $\mathsf V$ (resp.\ $\mathsf U$) is any function $\sigma: \mathsf V \to Q$ (resp.\ $\sigma': \mathsf U \to Q'$).
We define the \emph{contour configurations} $\eta(\sigma) \subseteq \mathsf E^*$ of $\sigma$ to be the set of all dual edges $e^*$ such that the endpoints of the corresponding primal edge $e$ are assigned
different spin by~$\sigma$. We also define $\eta(\sigma')\subseteq \mathsf E$ in a dual fashion.
The configuration space of our \emph{(constrained) spin model} is
\begin{align} \label{def:sigma}
\Sigma&= \{(\sigma,\sigma') \in { Q}^{\mathsf V} \times { Q'}^{\mathsf U}: \eta(\sigma)^* \cap \eta(\sigma') =\emptyset \}. 
\end{align}
In other words, this is the set of all pairs of primal and dual spin configurations $(\sigma,\sigma')$ whose {interfaces}, interpreted as subsets of $M$, do not intersect.
Equivalently, 
\begin{align} \label{eq:divfree}
(\sigma(v_1)-\sigma(v_2))(\sigma'(u_1)-\sigma'(u_2))=0
\end{align}
for every pair of a primal edge $\{v_1,v_2\}$ and its dual $\{ u_1,u_2\}$.

We study a probability measure on $\Sigma$ given by
\begin{align} \label{eq:defspin}
\mathbf{P}(\sigma,\sigma') = \frac{1}{\mathcal Z}  a^{|\eta(\sigma')|} b^{|\eta(\sigma)|},
\end{align}
where $a,b\in(0,1]$ are parameters of the model, and $\mathcal Z=\mathcal Z(q,q',a,b)$ is the partition function. This measure is equivalent to a pair of independent primal and dual ferromagnetic Potts models 
with $q$ and $q'$ spins, with coupling constants $J=-\log b$ and $J'=-\log a$ respectively, and conditioned on~$\Sigma$.

From this definition we immediately get the following description of mutual conditional laws for $\sigma$ and $\sigma'$.

\begin{corollary} \label{cor:PP}
Conditioned on $\sigma'$, $\sigma$ is distributed like the $q$-state Potts model defined on the quotient graph $\mathsf {G}/{\eta(\sigma')}$ where each connected component of $\eta$ becomes a single vertex. By duality, the analogous statement holds true when the roles of $\sigma$ and $\sigma'$ are exchanged. 
\end{corollary}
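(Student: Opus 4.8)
The plan is to read the conditional law directly off the product form of the weight \eqref{eq:defspin}, using the characterization \eqref{eq:divfree} of the configuration space to decide which $\sigma$ are admissible.

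First I would fix a dual configuration $\sigma'$ and recall that $\eta(\sigma')\subseteq\mathsf E$. By \eqref{eq:divfree}, a primal configuration $\sigma$ satisfies $(\sigma,\sigma')\in\Sigma$ if and only if $\sigma(v_1)=\sigma(v_2)$ for every edge $\{v_1,v_2\}\in\eta(\sigma')$; equivalently, $\sigma$ is constant on each connected component of the spanning subgraph $(\mathsf V,\eta(\sigma'))$. Such $\sigma$ are in canonical bijection with arbitrary spin configurations $\bar\sigma\in Q^{\bar{\mathsf V}}$, where $\bar{\mathsf V}$ is the vertex set of the quotient graph $\mathsf G/\eta(\sigma')$ obtained by contracting every edge of $\eta(\sigma')$ (so that each connected component of $(\mathsf V,\eta(\sigma'))$ collapses to one vertex): the bijection assigns to $\sigma$ the value it takes on the component represented by a given vertex of $\mathsf G/\eta(\sigma')$.

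Next I would rewrite $|\eta(\sigma)|$ in terms of $\bar\sigma$. Each edge of $\mathsf G$ belonging to $\eta(\sigma')$ becomes a loop of $\mathsf G/\eta(\sigma')$ and, since $\sigma$ is monochromatic on the corresponding component, never lies in $\eta(\sigma)^*$; each of the remaining edges of $\mathsf G$ corresponds to a non-loop edge of $\mathsf G/\eta(\sigma')$, keeping multiplicities, and lies in $\eta(\sigma)^*$ exactly when $\bar\sigma$ differs at its two endpoints. Hence $|\eta(\sigma)|=\sum_{\{\bar v_1,\bar v_2\}}\mathbf 1\{\bar\sigma(\bar v_1)\neq\bar\sigma(\bar v_2)\}$, the sum running over the edges of $\mathsf G/\eta(\sigma')$.

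Finally, on the event that $\sigma'$ equals the given configuration, both $\mathcal Z$ and $a^{|\eta(\sigma')|}$ in \eqref{eq:defspin} are constants, so the conditional probability of $\sigma$ is proportional to $b^{|\eta(\sigma)|}=\exp\big(-J\sum_{\{\bar v_1,\bar v_2\}}\mathbf 1\{\bar\sigma(\bar v_1)\neq\bar\sigma(\bar v_2)\}\big)$ with $J=-\log b$; by \eqref{def:Potts} this is precisely the $q$-state Potts measure on $\mathsf G/\eta(\sigma')$, transported through the bijection above. The second assertion then follows from the first by the symmetry of \eqref{def:sigma} and \eqref{eq:defspin} under interchanging $(\mathsf G,q,b)$ with $(\mathsf G^*,q',a)$ and $\sigma$ with $\sigma'$, which is legitimate since $\mathsf G^*$ is again a graph embedded in $M$ whose faces are topological discs and whose dual is $\mathsf G$. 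I do not expect a genuine obstacle here; the only points needing care are the bookkeeping with loops and parallel edges in the quotient graph and spelling out that ``connected component of $\eta$'' means a component of the spanning subgraph $(\mathsf V,\eta(\sigma'))$.
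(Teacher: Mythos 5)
Your proof is correct and is exactly the direct-from-the-definition argument the paper has in mind; the paper gives no separate proof, stating the corollary as following immediately from \eqref{eq:defspin} and \eqref{eq:divfree}. One small imprecision in your bookkeeping: a remaining edge of $\mathsf G$ (one not in $\eta(\sigma')$) whose two endpoints lie in the same $\eta(\sigma')$-component also becomes a loop in $\mathsf G/\eta(\sigma')$, not a non-loop edge; since such a loop contributes nothing to $|\eta(\sigma)|$ nor to the Potts Hamiltonian on the quotient, the identity $|\eta(\sigma)|=\sum_{\{\bar v_1,\bar v_2\}}\mathbf 1\{\bar\sigma(\bar v_1)\neq\bar\sigma(\bar v_2)\}$ still holds and the conclusion is unaffected.
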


\begin{remark}
We will only consider homogeneous weights $a$ and $b$, but most of our considerations generalize to non-homogeneous situations with $a_e = e^{- J'_{e^*}}$ and $b_{e^*} = e^{- J_{e}}$,
where $J$ and $J'$ are arbitrary sets of positive coupling constants on the primal and dual edges respectively. 
\end{remark}

\subsection{The height function}
Assume that $M$ is of genus zero. We say that $\{v_1,u_1,v_2,u_2\}$ is a \emph{quad}, if $\{v_1,v_2\}\in \mathsf E$ and $\{v_1,v_2\}^*=\{u_1,u_2\}$. 
For $(\sigma,\sigma')\in \Sigma$, we will consider a \emph{height function} $ H: \mathsf V \cup \mathsf U \to \mathbb{R}$ defined up to a constant by the rule: 
If $u\in \mathsf U$ and $v\in \mathsf V$ belong to the same quad, then 
\begin{align} \label{eq:Hdef}
 H(u)-  H(v) = \sigma(v)\sigma'(u).
\end{align}
The constant can be chosen by fixing the value of the function at a particular vertex or face. 
That these relations are consistent follows from condition \eqref{eq:divfree}. Indeed, \eqref{eq:divfree} is equivalent to the fact that the sum of the gradients~\eqref{eq:Hdef} around
each quad is zero.
We will denote by $h$ and $h'$ the restriction of $ H$ to $\mathsf V$ and $\mathsf U$ respectively.
Note that if $\{ v_1,v_2\}$ and $\{u_1,u_2\}$ are mutually dual edges, then
\begin{align}
h(v_2)-h(v_1) &= \sigma'(u_1)(\sigma(v_2)-\sigma(v_1))=\sigma'(u_2)(\sigma(v_2)-\sigma(v_1)),  \\
h'(u_2)-h'(u_1) &= \sigma(v_1)(\sigma'(u_2)-\sigma'(u_1))=\sigma(v_2)(\sigma'(u_2)-\sigma'(u_1)). \label{eq:proph2}
\end{align}

It follows from the definition that $h$ is constant on the clusters of constant spin $\sigma$, and $h'$ is constant on the clusters of constant spin $\sigma'$.

\begin{remark}
For surfaces of higher genus one can define in the same way a height function on the universal cover of $M$. Equivalently, one can talk about the increment of the height function between two points taken
along a curve, up to the homotopy of the curve. We will use the latter definition applied to the torus in Theorem~\ref{thm:selfdual}.
\end{remark}

\begin{remark}
So far we did not use the assumption that the sets $Q$ and $Q'$ are symmetric. This will be relevant for the considerations in Sect.~\ref{sec:interplay}.
A more general framework for the height function would be to consider arbitrary $Q,Q'$, and functions $f:Q\to \mathbb C$, $f':Q'\to \mathbb C$ such that $f$ and $f'$ are symmetric random variables 
when elements of $Q$ and $Q'$ are chosen uniformly. Then the gradient of the height function as in \eqref{eq:Hdef} could be given by $H(u)-  H(v) = f(\sigma(v))f'(\sigma'(u))$.
The slightly less general definition studied here corresponds to $f$ and $f'$ being injective. 
\end{remark}

\subsection{Bond percolation}
We also augment the model with a bond percolation configuration using the following procedure: 
Given $(\sigma,\sigma')\in \Sigma$ sampled according to $\mathbf{P}$,
\begin{enumerate}
\item \label{step1} Declare each primal edge in $\eta(\sigma')$ and each dual edge in $  \eta(\sigma)$ open. This is to say that
an edge is open if the corresponding dual edge carries two {different spins} in the dual spin configuration.
\item \label{step2} For each pair of a primal and its dual edge $e$ and $e^*$ such that neither $e\in\eta(\sigma')$ nor $e^*\in \eta(\sigma)$, and independently of other such pairs, declare the state of the edges with the following probabilities chosen depending on the value of $a+b$:
\begin{center}
\begin{tabular}{ |c|c|c| } 
  \cline{2-3}
  \multicolumn{1}{c|}{}&
  \multicolumn{1}{c|}{$\quad a+b\leq 1\quad $} &
  \multicolumn{1}{c|}{$\quad a+b\geq 1 \quad $ }  \\ 
 \hline
  $e$ open, $e^*$ closed & $a$ & $1-b$ \\ 
  $e$ closed, $e^*$ open & $b$ & $1-a$ \\ 
 both $e$, $e^*$ open & $1-a-b$ & $0$\\
  both $e$, $e^*$ closed & $0$ & $a+b-1$\\
 \hline
\end{tabular}
\end{center}
Note that in both cases the probability of opening $e$ and $e^*$ is $1-b$ and $1-a$ respectively.

\end{enumerate}

We call the resulting set of all open primal and dual edges $\omega$ and $\omega'$ respectively.
Note that $\omega\setminus \eta(\sigma')$ is exactly the set of open edges from the Edwards--Sokal coupling mentioned above 
applied to the Potts model $\sigma$ on the quotient graph $\mathsf {G}/{\eta(\sigma')}$ \footnote{We thank Ron Peled for this observation}. By duality, the same is true for $\omega'\setminus \eta(\sigma)$ and $\sigma'$.

A \emph{cluster} of $\omega$, resp.\ $\omega'$, is a connected component of the graph $(\mathsf V, \omega)$, resp.\ $(\mathsf U,\omega')$, including the isolated vertices.
We define
\begin{align*}
\Omega\Sigma = \{(\omega,\omega',\sigma,\sigma'): & \  \sigma \textnormal{ constant on clusters of } \omega \textnormal{ and } \eta(\sigma) \subseteq \omega',\\
	&\   \sigma' \textnormal{ constant on clusters of } \omega' \textnormal{ and } \eta(\sigma') \subseteq \omega\},
\end{align*}
where $(\sigma,\sigma') \in \Sigma$, to
be the space of consistent configurations for the spin model augmented with the sets of open edges, and
we denote by $\mathbf{P}(\omega,\omega',\sigma,\sigma')$ the probability measure on $\Omega\Sigma$ given by the coupling above.

\begin{remark}
We note that the left-hand side of the table above for $q=q'=2$ and $a=b$ describes the process studied by Ray and Spinka~\cite{SpiRay}.
\end{remark}

For $\xi\subseteq \mathsf E$, we write $\xi^{\dagger}=\mathsf E^*\setminus \xi^*$. It follows from the definition that 
\begin{align*}
\omega^{\dagger} \subseteq \omega'  \text{ for }a+b\leq1,  \qquad \text{ and } \qquad \omega^{\dagger}\supseteq \omega'  \text{ for } a+b\geq1
\end{align*}
almost surely.
We define $\Omega\Sigma_{\leq 1} $ and $\Omega\Sigma_{\geq 1}$ respectively to be $\Omega\Sigma$ with these additional restrictions imposed on $\omega$ and $\omega'$.
Note that for the boundary case $a+b=1$, we have that $\omega^{\dagger} = \omega'$ almost surely.

By definition, the weight of each configuration $(\omega,\omega',\sigma,\sigma')\in \Omega\Sigma_{\leq1}$ is
\begin{align} \label{eq:weight1}
a^{|\omega^* \setminus \omega'|} b^{|\omega' \setminus \omega^*|}(1-a-b)^{|\omega^* \cap \omega'|},
\end{align}
and the weight of $(\omega,\omega',\sigma,\sigma')\in \Omega\Sigma_{\geq1}$ is
\begin{align} \label{eq:weight2}
a^{|\eta(\sigma')|}(1-b)^{|\omega \setminus \eta(\sigma')|} b^{|\eta(\sigma)|}(1-a)^{|\omega' \setminus \eta(\sigma)|}(a+b-1)^{\mathsf E^* \setminus (\omega^* \cup \omega')}.
\end{align}
Note that \eqref{eq:weight1} is independent of $(\sigma,\sigma')$.

It turns out that the clusters of $\omega$ encode geometrically the stochastic dependencies of the spin model $\sigma$.
This is manifested in the following Edwards--Sokal property of the coupling between $\sigma$ and $\omega$. Clearly, the same holds for $\omega'$ and $\sigma'$ by duality.
In the following, for $\xi\subseteq \mathsf E$, we denote by $V(\xi)$ the set of vertices incident on at least one edge in $\xi$.

\begin{proposition} \label{prop:coupling} Conditioned on $\omega$, 
\begin{enumerate}
\item $\sigma$ is distributed like an independent uniform assignment of a spin from $Q$ to each cluster of $\omega$. \label{eq:step1}
\item $\sigma'$ is distributed like the $q'$-state Potts model with coupling constant $J$ satisfying $e^{-J} = \tfrac{a}{1-b}$, and defined on the dual $(\mathsf V(\omega), \omega)^*$ of $(\mathsf V(\omega), \omega)$, i.e.,
the graph whose vertices are the faces of $\omega$ and where two faces are adjacent, if they share an edge in $\omega$. Note that multiple edges and loops are possible.
\label{eq:step2}
\item in particular, $\sigma$ and $\sigma'$ are independent.  \label{eq:step3}
\end{enumerate}
\end{proposition}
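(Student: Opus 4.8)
The plan is to analyze the coupling construction directly, conditioning step by step. Recall that $\omega$ is obtained from $(\sigma,\sigma')$ by first putting $\eta(\sigma') \subseteq \omega$ deterministically, and then, for each dual pair $(e,e^*)$ with $e \notin \eta(\sigma')$ and $e^* \notin \eta(\sigma)$, opening $e$ with marginal probability $1-b$ independently. As observed in the excerpt (footnote attributed to Peled), this means $\omega \setminus \eta(\sigma')$ is precisely the Edwards--Sokal bond set for the Potts model $\sigma$ on the quotient graph $\mathsf G / \eta(\sigma')$, with parameter $1-b = 1 - e^{-J}$ where $J = -\log b$. Hence the first move is: given $\sigma'$, by Corollary~\ref{cor:PP} the pair $(\sigma,\omega)$ restricted to this quotient graph is exactly an Edwards--Sokal coupling of a Potts model and its random cluster model. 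By the standard Edwards--Sokal property, conditioned on $\sigma'$ and on $\omega$ (equivalently on $\eta(\sigma')$ and the extra bonds), $\sigma$ is a uniform independent spin per cluster of $\omega$ on the quotient graph — but clusters of $\omega$ on $\mathsf G/\eta(\sigma')$ are the same as clusters of $\omega$ on $\mathsf G$ since $\eta(\sigma') \subseteq \omega$. This already gives part~\eqref{eq:step1}, provided we check that the conditional law of $\sigma$ given $\omega$ does not retain extra dependence on $\sigma'$; this follows because, given $\omega$, the only constraint linking $\sigma$ to $\sigma'$ is $\eta(\sigma') \subseteq \omega$, which constrains $\sigma'$, not $\sigma$, and the Edwards--Sokal spin-resampling is insensitive to which sub-bonds were ``forced.''

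Next I would compute the conditional law of $\sigma'$ given $\omega$. The cleanest route is to write down the joint weight of $(\omega,\omega',\sigma,\sigma')$ — e.g.\ \eqref{eq:weight2} in the regime $a+b \geq 1$, and the analogous product $a^{|\eta(\sigma')|} b^{|\eta(\sigma)|}(1-a-b)^{|\eta(\sigma)^{\mathrm{c}}|}\cdots$ bookkeeping in the regime $a+b\leq 1$ — sum out $\omega'$, and then sum out $\sigma$. Summing out $\sigma$ over uniform spins per cluster of $\omega$ contributes a factor $q^{k(\omega)}$ independent of $\sigma'$. What remains is a weight on $(\omega,\sigma')$ proportional to $a^{|\eta(\sigma')|}$ times a function of $\omega$ alone, subject to the constraint $\eta(\sigma') \subseteq \omega$. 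Now $\eta(\sigma') \subseteq \omega$ means: for every edge $e \in \mathsf E$, if $e \notin \omega$ then the two faces of $\mathsf G$ bordering $e$ receive equal $\sigma'$-spin; equivalently, $\sigma'$ is constant on the faces of $\omega$ (the vertices of the dual graph $(\mathsf V(\omega),\omega)^*$ described in the statement), and the only $e \in \omega$ that can lie in $\eta(\sigma')$ are those separating two distinct faces. So $\sigma'$ descends to a spin assignment on the faces-of-$\omega$ graph, and $|\eta(\sigma')|$ counts the edges of that graph (with multiplicity, including loops contributing zero) whose two endpoints disagree — i.e.\ $a^{|\eta(\sigma')|}$ is exactly the (rescaled) Potts weight $\exp(-J'' \sum \mathbf 1\{\sigma'(u_1)\neq\sigma'(u_2)\})$ on $(\mathsf V(\omega),\omega)^*$ with $e^{-J''}=a$. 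To get the claimed parameter $e^{-J} = a/(1-b)$ rather than $a$, I must be careful about the normalization: the $(1-b)$'s in \eqref{eq:weight2} attach to edges of $\omega \setminus \eta(\sigma')$, and reorganizing $a^{|\eta(\sigma')|}(1-b)^{|\omega\setminus\eta(\sigma')|}$ as $(1-b)^{|\omega|}(a/(1-b))^{|\eta(\sigma')|}$ absorbs the $\omega$-dependent prefactor into the part that does not involve $\sigma'$, leaving precisely the Potts weight with $e^{-J} = a/(1-b)$. (In the $a+b\leq 1$ regime the same rearrangement works with the roles of the $a,b,1-a-b$ weights; the bookkeeping differs but the conclusion is the same, and one should double-check both tables give the stated $a/(1-b)$.)

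Finally, part~\eqref{eq:step3} is immediate once \eqref{eq:step1} and \eqref{eq:step2} are established in the strong form ``conditionally on $\omega$, the law of $(\sigma,\sigma')$ is the product of (uniform-per-cluster) and (Potts on the dual-of-$\omega$ graph)'': I would phrase the computation so that the conditional joint weight of $(\sigma,\sigma')$ given $\omega$ literally factorizes as $[\text{function of }\sigma] \cdot [\text{function of }\sigma']$, which simultaneously proves all three claims. The main obstacle I anticipate is the bookkeeping in step~\eqref{eq:step2}: correctly tracking which power-of-$(1-b)$ (or of $a,b,1-a-b$) is ``$\omega$-only'' versus ``$\sigma'$-dependent'' across the two parameter regimes, and verifying that loops and multiple edges in $(\mathsf V(\omega),\omega)^*$ are handled consistently (loops contribute a trivially satisfied constraint, multi-edges contribute repeated factors, which is exactly what the Potts weight on a multigraph does). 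Everything else — the Edwards--Sokal input, summing out $\sigma$ to get $q^{k(\omega)}$, and the final factorization — is routine.
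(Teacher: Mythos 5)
Your proposal is correct and, at its core, follows the same route as the paper: both proofs compute the weight of a consistent triple $(\omega,\sigma,\sigma')$ after marginalizing out $\omega'$, observe that it factors as $a^{|\eta(\sigma')|}(1-b)^{|\omega\setminus\eta(\sigma')|}\,b^{|\omega^\dagger|}$ (equation \eqref{eq:condweight}), and read off all three claims from this factorization. Your part~(1) is phrased via the Edwards--Sokal observation in the footnote rather than the direct weight count, but that is the same calculation in different clothing, and the ``no residual $\sigma'$-dependence'' argument you sketch is exactly the correct logical step (the conditional law $\sigma\,|\,(\omega,\sigma')$ is the same for every compatible $\sigma'$, hence $\sigma\perp\sigma'\mid\omega$). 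One small simplification you left hanging: the worry about the ``two parameter regimes'' in part~(2) is a non-issue. Once $\omega'$ is summed out, each die-tossed edge contributes $(1-b)$ if $e\in\omega$ and $b$ if $e\notin\omega$ \emph{regardless} of whether $a+b\le1$ or $a+b\ge1$, because in both columns of the table the two rows with $e$ open sum to $1-b$ and the two rows with $e$ closed sum to $b$ --- the paper flags precisely this under the table. So there is a single formula \eqref{eq:condweight} for the marginal weight, and the rearrangement $a^{|\eta(\sigma')|}(1-b)^{|\omega\setminus\eta(\sigma')|}=(1-b)^{|\omega|}\bigl(\tfrac{a}{1-b}\bigr)^{|\eta(\sigma')|}$ you propose then gives the Potts weight on $(\mathsf V(\omega),\omega)^*$ with $e^{-J}=a/(1-b)$ without any case split.
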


\begin{proof}
We claim that for fixed $(\omega,\sigma')$ with $\eta(\sigma')\subseteq \omega$, the weight of each consistent configuration 
$(\omega,\sigma,\sigma')$, i.e., such that $\sigma$ is constant on the clusters of $\omega$, is equal to
\begin{align} \label{eq:condweight}
a^{|\eta(\sigma')|}(1-b)^{|\omega\setminus \eta(\sigma')|} b^{|\omega^{\dagger}|},
\end{align}
where $\omega^{\dagger}=\mathsf E^*\setminus \omega^*$,
and in particular is independent of $\sigma$.
Indeed this follows from the fact that each edge in 
\begin{itemize}
\item $\eta(\sigma) $ contributes weight $b$ by the definition of the spin model,
\item $\omega^{\dagger}\setminus \eta(\sigma)$ also contributes weight $b$ since this is the probability that a dual edge $\{ u_1,u_2\}$ with $\sigma'({u_1})= \sigma'({u_2})$ ends up in $\omega^{\dagger}$ in step \eqref{step2} of the definition of the edge percolation model.
\end{itemize}
This means that conditioned on $(\omega,\sigma')$, we have a uniform distribution on all spin configurations $\sigma$ such that $\eta(\sigma) \subseteq \omega^{\dagger} $. 
This is equivalent to choosing an independent uniform spin for each cluster of $\omega$ and we conclude property \eqref{eq:step1}.

Property \eqref{eq:step2} also follows from \eqref{eq:condweight}, the definition of the Potts model, and the fact that the only constraint on $\sigma'$ is that $\eta(\sigma')\subseteq \omega$.

Conditional independence of $\sigma$ and $\sigma'$ follows from the fact that \eqref{eq:condweight} does not depend on $\sigma$.
\end{proof}
We note that property ~\eqref{eq:step1} for $q=q'=2$ was first studied in~\cite{GlaPel, SpiRay}.

\begin{remark}
Since \eqref{eq:weight1} is independent of both $\sigma$ and $\sigma'$ for $a+b\leq1$, property \eqref{eq:step1} from the proposition above holds in this case 
simultaneously for $\sigma$ and $\sigma'$ when conditioned on $(\omega,\omega')$.
\end{remark}

The next result gives an explicit formula for the probability of $\omega$ in terms of an associated Potts model.
\begin{corollary} \label{cor:marginal}
The marginal distribution on $(\omega,\sigma')$ is given by
\begin{align} \label{eq:od1}
\mathbf P (\omega,\sigma') \propto q^{k(\omega)}  a^{|\eta(\sigma')|}(1-b)^{|\omega \setminus \eta(\sigma')|} b^{|\mathsf E\setminus \omega| }\mathbf{1}_{\{\eta(\sigma')\subseteq \omega\}}, \quad \sigma' \in Q'^{\mathsf U}, \omega\subseteq \mathsf E.
\end{align}
Summing over all $\sigma'$, we get
\begin{align} \label{eq:od2}
\mathbf P (\omega) \propto q^{k(\omega)} \big(\tfrac{1-b}b \big)^{|\omega|}Z_{ (\mathsf V({ \omega}),\omega)^*,q'}(\tfrac{1-a-b}{a}), \qquad \omega\subseteq \mathsf E.
\end{align}
\end{corollary}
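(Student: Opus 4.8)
The plan is to read off \eqref{eq:od1} from the conditional weight \eqref{eq:condweight} established in the proof of Proposition~\ref{prop:coupling}, and then to obtain \eqref{eq:od2} from \eqref{eq:od1} by summing over $\sigma'$ and recognizing the result as a Potts partition function on $(\mathsf V(\omega),\omega)^*$.

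For \eqref{eq:od1}: fix $\omega\subseteq\mathsf E$ and $\sigma'\in Q'^{\mathsf U}$. If $\eta(\sigma')\not\subseteq\omega$ then the weight vanishes, which accounts for the indicator. If $\eta(\sigma')\subseteq\omega$, then by the proof of Proposition~\ref{prop:coupling} the weight in $\mathbf P$ of a configuration $(\omega,\sigma,\sigma')$ (already marginalized over $\omega'$) is $a^{|\eta(\sigma')|}(1-b)^{|\omega\setminus\eta(\sigma')|}b^{|\omega^{\dagger}|}$ for every $\sigma$ that is constant on the clusters of $\omega$, and is $0$ otherwise. There are exactly $q^{k(\omega)}$ such $\sigma$, and each of them is automatically consistent with $\sigma'$: being constant on clusters of $\omega$ forces $\eta(\sigma)\subseteq\omega^{\dagger}$, hence $\eta(\sigma)^*\subseteq\mathsf E\setminus\omega$, which is disjoint from $\eta(\sigma')\subseteq\omega$. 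Summing the common weight over these $q^{k(\omega)}$ choices and using $|\omega^{\dagger}|=|\mathsf E^*|-|\omega^*|=|\mathsf E\setminus\omega|$ gives \eqref{eq:od1}.

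For \eqref{eq:od2}: sum \eqref{eq:od1} over $\sigma'$. Pull out the factors not depending on $\sigma'$ and write $(1-b)^{|\omega\setminus\eta(\sigma')|}=(1-b)^{|\omega|}(1-b)^{-|\eta(\sigma')|}$, so that the task reduces to evaluating $\sum_{\sigma'\colon\eta(\sigma')\subseteq\omega}(a/(1-b))^{|\eta(\sigma')|}$. Here one uses the planar-duality dictionary already underlying Proposition~\ref{prop:coupling}\eqref{eq:step2}: $\sigma'\colon\mathsf U\to Q'$ satisfies $\eta(\sigma')\subseteq\omega$ exactly when $\sigma'$ is constant on each connected component of $(\mathsf U,\omega^{\dagger})$, equivalently on each face of the embedded subgraph $(\mathsf V(\omega),\omega)$; such $\sigma'$ are therefore in bijection with spin assignments on the vertices of $(\mathsf V(\omega),\omega)^*$, and under this bijection $|\eta(\sigma')|$ equals the number of edges of $(\mathsf V(\omega),\omega)^*$ receiving unequal spins on their two endpoints. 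Hence, in the normalization \eqref{def:Potts}, the sum equals $Z_{(\mathsf V(\omega),\omega)^*,q'}(x')$ with $e^{-J'}=a/(1-b)$, i.e.\ $x'=e^{J'}-1=(1-a-b)/a$. Substituting this back into \eqref{eq:od1} summed over $\sigma'$, and absorbing the overall constant $b^{|\mathsf E|}$ into the proportionality, yields $\mathbf P(\omega)\propto q^{k(\omega)}\big(\tfrac{1-b}{b}\big)^{|\omega|}Z_{(\mathsf V(\omega),\omega)^*,q'}\big(\tfrac{1-a-b}{a}\big)$, which is \eqref{eq:od2}.

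The only step that is more than bookkeeping is the planar-duality identification used in the second paragraph: matching $\{\sigma'\colon\eta(\sigma')\subseteq\omega\}$ with spin configurations on $(\mathsf V(\omega),\omega)^*$, and $|\eta(\sigma')|$ with the number of discordant edges there. This needs a little care about isolated vertices of $(\mathsf V,\omega)$ (which do not alter the faces, all their incident edges being closed) and about edges that become loops in the contracted picture (which never belong to $\eta(\sigma')$, consistently with their being loops of $(\mathsf V(\omega),\omega)^*$); on a general surface $M$ the graph $(\mathsf V(\omega),\omega)^*$ is still the geometric dual of the embedded subgraph and the argument is unchanged. Since this dictionary is precisely what supports Proposition~\ref{prop:coupling}\eqref{eq:step2}, one may simply invoke it, and then \eqref{eq:od2} follows immediately.
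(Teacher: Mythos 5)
Your proof is correct and follows essentially the same route as the paper: deduce \eqref{eq:od1} from the conditional weight \eqref{eq:condweight} together with the count of $q^{k(\omega)}$ consistent $\sigma$'s, then obtain \eqref{eq:od2} by summing over $\sigma'$ and recognizing $\sum_{\sigma'\colon\eta(\sigma')\subseteq\omega}(a/(1-b))^{|\eta(\sigma')|}$ as the Potts partition function on $(\mathsf V(\omega),\omega)^*$ with $x'=(1-a-b)/a$. The extra paragraphs you add (verifying that any $\sigma$ constant on clusters of $\omega$ is automatically compatible with $\sigma'$, and spelling out the planar-duality dictionary underlying the identification of the sum with $Z_{(\mathsf V(\omega),\omega)^*,q'}$) are details the paper leaves implicit, but the argument is the same one.
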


\begin{proof}
The first equality follows directly from \eqref{eq:condweight}, and the fact that there are exactly $q^{k(\omega)}$ configurations of $\sigma$ which are constant on the clusters of $\omega$.
We get the second equality from the fact that
\[
\sum_{\sigma': \ \eta(\sigma') \subseteq  \omega }  \big ( \tfrac{a}{1-b} \big)^{|\eta(\sigma')|} =Z_{ (\mathsf V({ \omega}),\omega)^*,q'}\big(\tfrac{1-a-b}{a}\big). \qedhere
\]
\end{proof}
Note that the Potts model whose partition function appears in~\eqref{eq:od2} is ferromagnetic if and only if $a+b\leq 1$.

\section{Relationship with other models} \label{sec:other}
For special values of the parameters $a,b$ and $q,q'$, we recover various well known models of statistical mechanics. 
\subsection{$\text{FK}$-random cluster model for $a+b=1$}
Recall that in this case $\omega'= \omega^{\dagger}$ almost surely, and hence \eqref{eq:weight1} simplifies to
\begin{align} \label{eq:FKraw}
\mathbf{P}(\omega,\omega',\sigma, \sigma') = \frac{1}{\mathcal Z} a^{|\omega|} (1-a)^{|\mathsf E\setminus \omega|}, \quad (\omega,\omega',\sigma,\sigma')  \in \Omega\Sigma_{1},
\end{align}
where $\Omega\Sigma_{1}= \Omega\Sigma_{\geq 1} \cap \Omega\Sigma_{\leq 1}$.

We first consider the case when $M$ is of genus zero.
We can readily recognize the underlying Fortuin-Kasteleyn random cluster model~\cite{FK} (see e.g.\ \cite{RC} for an exposition on this classical subject).

\begin{proposition} \label{prop:RC}
Assume that $M$ is of genus zero, and $a+b=1$. Let
\[ 
\qquad p =\frac{q'}{q'+a^{-1}-1}.
\] 
Let $k(\omega)$ be the number of clusters of $\omega$. 
Then the marginal distribution of $\mathbf{P}$ on $\omega$ is given by
\begin{align*}
\mathbf{P}(\omega) \propto  (qq')^{k(\omega)}p^{|\omega|}(1-p)^{|\mathsf E\setminus \omega|}, 
\end{align*}
which is the $\textnormal{FK}( qq')$ random cluster model measure on $\mathsf G$ with free boundary conditions. 
\end{proposition}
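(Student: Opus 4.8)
The plan is to start from the explicit formula \eqref{eq:od2} for the marginal law of $\omega$, which was derived in Corollary~\ref{cor:marginal} for general parameters, and specialize it to $a+b=1$. In this regime the Potts partition function appearing there has argument $\tfrac{1-a-b}{a}=0$; the key observation is that $Z_{H,q'}(0)$ counts precisely the spin configurations with \emph{no} monochromatic edge disagreements allowed, i.e.\ $Z_{H,q'}(0)=\sum_{\sigma' :\ \eta(\sigma')=\emptyset}1 = (q')^{c(H)}$ where $c(H)$ is the number of connected components of the graph $H$ (each component must be monochromatic, and there are $q'$ choices per component). Here $H=(\mathsf V(\omega),\omega)^*$, the dual graph of $(\mathsf V(\omega),\omega)$.

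The next step is a purely topological/combinatorial identity: since $M$ has genus zero (is planar), the number of connected components of the dual graph $(\mathsf V(\omega),\omega)^*$ equals the number of faces of $\omega$, and by Euler's formula this is $f(\omega) = |\mathsf E| - |\omega| + k(\omega) - |\mathsf V| + 2$ when one is careful about isolated vertices and the outer face; more usefully, $c\big((\mathsf V(\omega),\omega)^*\big)$ can be rewritten in terms of $|\mathsf E\setminus\omega|$, $k(\omega)$, and constants depending only on $\mathsf G$. Concretely, the faces of the open subgraph are in bijection with the connected components of the closed dual edges $\omega^\dagger = \mathsf E^*\setminus\omega^*$, and a planar Euler count gives $c = |\mathsf E\setminus\omega| - k(\omega) + (\text{const})$, the constant being $1$ or something like $|\mathsf V| - |\mathsf U| + \ldots$ depending on bookkeeping conventions — in any case it does not depend on $\omega$ and so is absorbed into the normalization. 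Substituting this into \eqref{eq:od2} turns $(q')^{c(\omega)}$ into $(q')^{-k(\omega)}(q')^{|\mathsf E\setminus\omega|}\cdot(\text{const})$.

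With that substitution, \eqref{eq:od2} reads $\mathbf{P}(\omega)\propto q^{k(\omega)}\,(q')^{-k(\omega)}\,\big(\tfrac{1-b}{b}\big)^{|\omega|}(q')^{|\mathsf E\setminus\omega|}$ up to an $\omega$-independent factor. Using $b = 1-a$ this becomes $\mathbf{P}(\omega)\propto q^{k(\omega)}(q')^{-k(\omega)}\big(\tfrac{a}{1-a}\big)^{|\omega|}(q')^{|\mathsf E\setminus\omega|}$; then factor $(q')^{|\mathsf E\setminus\omega|}=(q')^{|\mathsf E|}(q')^{-|\omega|}$, absorb $(q')^{|\mathsf E|}$ into the constant, and one is left with $\mathbf{P}(\omega)\propto q^{k(\omega)}(q')^{-k(\omega)}\big(\tfrac{a}{(1-a)q'}\big)^{|\omega|}$. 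Finally I would check the algebraic identity that with $p = \tfrac{q'}{q'+a^{-1}-1}$ one has $\tfrac{p}{1-p} = \tfrac{a}{(1-a)q'}\cdot q'^{?}$ — more precisely, compute $\tfrac{p}{1-p}$ and match it against $\big(\tfrac{a}{(1-a)q'}\big)$ times whatever power of $q'$ is needed so that $q^{k}(q')^{-k}(\cdots)^{|\omega|}$ rewrites as $(qq')^{k(\omega)}p^{|\omega|}(1-p)^{|\mathsf E\setminus\omega|}$ up to constants; this is a short direct verification. Recognizing the resulting expression as the $\mathrm{FK}(qq')$ measure with free boundary conditions is then immediate from the standard definition of the random cluster model.

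The main obstacle I anticipate is getting the Euler-characteristic bookkeeping exactly right: one must be careful about isolated vertices of $\mathsf G$ (which contribute to $k(\omega)$ but carry no dual structure), about how the dual graph $(\mathsf V(\omega),\omega)^*$ is defined when $\omega$ has bridges or is disconnected, and about the outer face when $M$ is the sphere versus the plane — exactly the subtleties the excerpt already flags ("multiple edges and loops are possible"). The cleanest route is probably to avoid Euler's formula for the dual graph directly and instead use the planar duality identity $k(\omega) + c\big((\mathsf V(\omega),\omega)^*\big) = |\omega| + (\text{number of faces of } \mathsf G \text{ touched}) + \text{const}$, or equivalently to count the faces of the subgraph $(\mathsf V,\omega)$ of the full graph $\mathsf G$ and relate $c\big((\mathsf V(\omega),\omega)^*\big)$ to that face count. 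Once the correct $\omega$-independent constant is identified, the rest is routine algebra. I would also briefly note that the genus-zero hypothesis is used precisely here (the analogous statement on higher-genus surfaces would pick up extra topological terms), which is consistent with the paper treating the genus-zero case first.
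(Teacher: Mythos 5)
Your overall route — specialize \eqref{eq:od2} to $a+b=1$, observe that the Potts partition function there trivializes, and then use an Euler count — is a valid alternative to the paper's proof, which instead starts from the joint weight \eqref{eq:FKraw} on $\Omega\Sigma_1$ and directly counts the pairs $(\sigma,\sigma')$ compatible with a fixed $\omega$. However, your execution contains two concrete errors, the second of which is fatal.

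First, the identity $Z_{H,q'}(0)=(q')^{c(H)}$ is wrong. In the conventions of the paper $x=e^J-1$, so $x=0$ means $J=0$: the free (infinite-temperature) model, where every spin configuration gets weight $1$ and $Z_{H,q'}(0)=(q')^{|V(H)|}$. The formula $(q')^{c(H)}$ would arise from the opposite limit $x\to\infty$ (all disagreements infinitely penalized). What saves the approach is that $|V((\mathsf V(\omega),\omega)^*)|$ is by definition $f(\omega)$, the number of faces of $(\mathsf V(\omega),\omega)$, which equals $k(\omega^\dagger)$ — so the correct quantity to plug in is $(q')^{f(\omega)}$, and incidentally the direct count $\sum_{\sigma':\eta(\sigma')\subseteq\omega}1=(q')^{k(\omega^\dagger)}$ (spins constant on dual clusters) gives the same thing without going through the dual partition function at all.

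Second, and decisively, your Euler relation has the wrong sign on $k(\omega)$. From $|\mathsf V|-|\omega|+f(\omega)=1+k(\omega)$ one gets $f(\omega)=k(\omega)+|\omega|-|\mathsf V|+1=k(\omega)-|\mathsf E\setminus\omega|+(\textnormal{const})$, i.e.\ $f(\omega)$ grows with $k(\omega)$; your formula $c=|\mathsf E\setminus\omega|-k(\omega)+(\textnormal{const})$ has both signs flipped. Substituting your version produces $\mathbf P(\omega)\propto q^{k(\omega)}(q')^{-k(\omega)}(\cdots)^{|\omega|}=(q/q')^{k(\omega)}(\cdots)^{|\omega|}$, and no $|\omega|$-dependent fudge factor (such as powers of $p/(1-p)$) can convert $(q/q')^{k(\omega)}$ into $(qq')^{k(\omega)}$, since these differ by a factor depending on $k(\omega)$ rather than on $|\omega|$. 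You flagged that you were unsure of the additive constant in the Euler count, but the actual problem is the coefficient of $k(\omega)$, which is not absorbable. With the sign corrected, the computation closes: $\mathbf P(\omega)\propto q^{k(\omega)}(q')^{k(\omega)}\big(\tfrac{a}{1-a}\big)^{|\omega|}(q')^{|\omega|}=(qq')^{k(\omega)}\big(\tfrac{aq'}{1-a}\big)^{|\omega|}$, and $\tfrac{p}{1-p}=\tfrac{aq'}{1-a}$ gives the stated form.
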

\begin{proof}
Consider $\omega \subseteq \mathsf E$. Using \eqref{eq:FKraw}, it is enough to count how many pairs of spin configurations $(\sigma,\sigma')\in \Sigma$ are compatible with $\omega$, meaning that 
$(\omega,\omega^{\dagger},\sigma,\sigma')\in \Omega\Sigma_{1}$. By the definition of $\Omega\Sigma_1$, this is the same as requiring that
$\sigma$ is constant on the connected components of $\omega$, and $\sigma'$ is constant on the connected components of $\omega^{\dagger}$. Using Euler's formula 
\[
k(\omega^{\dagger}) = k(\omega) + |\omega|-|V|+1
\] 
we conclude that the total number of compatible pairs is $(q {q'})^{k(\omega)} {q'}^{|\omega|} \times const$, where $const$ is independent of~$\omega$. Plugging this into \eqref{eq:FKraw} we get
\[
\mathbf{P}(\omega) \propto  (qq')^{k(\omega)}(aq')^{|\omega|}(1-x)^{|\mathsf E\setminus \omega|} \propto   (qq')^{k(\omega)}p^{|\omega|}(1-p)^{|\mathsf E\setminus \omega|},
\]
which concludes the proof.
\end{proof}

\begin{remark} 
\label{rem:fkcrit} For $q=q'$, the point $a=b=1/2$ corresponds to $p=q/(q+1)$ which is the critical point of the $\textnormal{FK}(q^2)$ random cluster model on the square lattice~\cite{BefDum}.
\end{remark}

\begin{corollary}
Assume that $M$ is of genus zero.
Let $\omega$ be distributed according to the $\textnormal{FK}( qq')$ random cluster model with parameter $p$ as above. For each cluster of $\omega$, choose a spin $\sigma \in  Q$,
and for each cluster of~$\omega^{\dagger}$, choose a spin $\sigma' \in  Q'$ uniformly and independently of one another. Then $(\sigma,\sigma')$ has the distribution of the spin model \eqref{eq:defspin} with $a$ as above and $a+b=1$.
\end{corollary}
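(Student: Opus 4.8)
The plan is to recognize this corollary as the Edwards--Sokal-type reverse direction of Proposition~\ref{prop:RC}: we start from the random cluster model $\omega$ and the uniform spin assignments, and we must show the resulting $(\sigma,\sigma')$ has law $\mathbf{P}$ as in \eqref{eq:defspin}. First I would set up the joint measure: let $\omega$ be sampled from $\textnormal{FK}(qq')$ with parameter $p$, and then sample $\sigma$ uniformly among configurations constant on clusters of $\omega$, and $\sigma'$ uniformly among configurations constant on clusters of $\omega^\dagger$, all independently. The key observation is that for any such jointly sampled triple, automatically $\eta(\sigma)\subseteq \omega^\dagger{}^{*}\,^{*}$... more precisely $\eta(\sigma)^*\cap\eta(\sigma')=\emptyset$, so $(\sigma,\sigma')\in\Sigma$; indeed $\sigma$ changes only across edges \emph{not} in $\omega$ (edges where the two endpoints lie in different clusters may still carry equal spins, but never an edge inside a cluster carries different spins), i.e.\ $\eta(\sigma)\subseteq \mathsf E^*\setminus\omega^* = \omega^\dagger$ is not quite automatic — rather $\eta(\sigma')\subseteq$ (edges dual to $\omega^\dagger$) $=\omega$, and dually. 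Actually the clean statement: $\sigma$ constant on clusters of $\omega$ forces the primal edges where $\sigma$ disagrees to be a subset of $\mathsf E\setminus\omega$, so $\eta(\sigma)\subseteq(\mathsf E\setminus\omega)^* $, and similarly $\eta(\sigma')\subseteq(\mathsf E^*\setminus\omega^\dagger)^*=\omega^*$... so $\eta(\sigma)^*\subseteq \mathsf E\setminus\omega$ while $\eta(\sigma')\subseteq\omega^*{}^*=$ hmm. I would just invoke that $\omega$ and $\omega^\dagger$ are complementary in the sense of planar duality (Euler's formula as in the proof of Proposition~\ref{prop:RC}), so a primal edge where $\sigma$ disagrees has its dual edge in $\omega^\dagger$, a primal edge where $\sigma'$... wait $\sigma'$ lives on faces. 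In any case the non-crossing condition \eqref{eq:divfree} holds because on each quad, either the primal edge is in $\omega$ (then $\sigma$ agrees across it, so the primal gradient vanishes) or the primal edge is not in $\omega$, hence its dual is in $\omega^\dagger$, so $\sigma'$ agrees across the dual edge and the dual gradient vanishes. Thus $(\sigma,\sigma')\in\Sigma$ with probability one.

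Next I would compute the marginal law of $(\sigma,\sigma')$. By the standard partition-into-fibers argument, for a fixed $(\sigma,\sigma')\in\Sigma$,
\[
\mathbf{P}(\sigma,\sigma') = \sum_{\omega\ \text{compatible}} \mathbf{P}(\omega)\cdot q^{-k(\omega)}\cdot (q')^{-k(\omega^\dagger)},
\]
where ``$\omega$ compatible'' means $\sigma$ is constant on clusters of $\omega$ and $\sigma'$ on clusters of $\omega^\dagger$. Since $\mathbf{P}(\omega)\propto(qq')^{k(\omega)}p^{|\omega|}(1-p)^{|\mathsf E\setminus\omega|}$, the factors $q^{k(\omega)}(q')^{k(\omega)}$ cancel with $q^{-k(\omega)}(q')^{-k(\omega^\dagger)}$ only up to the discrepancy between $k(\omega^\dagger)$ and $k(\omega)$, which Euler's formula $k(\omega^\dagger)=k(\omega)+|\omega|-|\mathsf V|+1$ turns into a factor $(q')^{|\mathsf V|-1-|\omega|}$; absorbing the $\omega$-independent part into the normalization, the sum becomes proportional to $\sum_{\omega}(p/(q'(1-p)))^{|\omega|}(1-p)^{|\mathsf E|}\mathbf 1\{\omega\ \text{compatible}\}$ up to constants. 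Then I substitute $p = q'/(q'+a^{-1}-1)$, which gives $p/((1-p)q') = a$ and $1-p = (a^{-1}-1)/(q'+a^{-1}-1)$, so that each compatible $\omega$ contributes $a^{|\omega|}(1-a)^{|\mathsf E\setminus\omega|}$ up to an overall constant — this is exactly the weight \eqref{eq:FKraw} of $\Omega\Sigma_1$, and summing it over $\omega$ compatible with $(\sigma,\sigma')$ reproduces $\mathbf{P}(\sigma,\sigma')$ from \eqref{eq:defspin}, since \eqref{eq:FKraw} is the marginalization the other way.

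Actually the cleanest route avoids re-deriving the arithmetic: having established $(\sigma,\sigma')\in\Sigma$ a.s., I would note that the described sampling procedure is precisely the conditional law of $(\sigma,\sigma')$ given $\omega$ under the measure $\mathbf{P}$ on $\Omega\Sigma_1$ from \eqref{eq:FKraw} (each cluster of $\omega$ gets an independent uniform $Q$-spin, each cluster of $\omega^\dagger$ an independent uniform $Q'$-spin — this is exactly the content of \eqref{eq:FKraw} being independent of $(\sigma,\sigma')$ together with the definition of $\Omega\Sigma_1$), and that the marginal of this same $\mathbf{P}$ on $\omega$ is the $\textnormal{FK}(qq')$ model by Proposition~\ref{prop:RC}. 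Hence the joint law built in the corollary coincides with $\mathbf{P}$ on $\Omega\Sigma_1$, and its $(\sigma,\sigma')$-marginal is the spin model \eqref{eq:defspin} by construction of $\mathbf{P}$ (which was defined to have that marginal). So the proof is: (i) the sampled $(\sigma,\sigma')$ lies in $\Sigma$ a.s.; (ii) identify the procedure with the conditional law $\mathbf{P}(\,\cdot\mid\omega)$ on $\Omega\Sigma_1$; (iii) apply Proposition~\ref{prop:RC} to identify the $\omega$-marginal; (iv) conclude by uniqueness of the joint law. The main obstacle is step (i) — carefully checking via the quad condition \eqref{eq:divfree} and Euler's formula that uniform spins on clusters of $\omega$ and of $\omega^\dagger$ respectively are always consistent with $\Sigma$ — and making sure the clusters of $\omega^\dagger$ (a configuration on $\mathsf E^*$, i.e.\ on the dual graph whose vertices are faces of $\mathsf G$) are the right objects for $\sigma'$ to be constant on; this is where genus zero is used, so that planar duality and Euler's formula apply verbatim.
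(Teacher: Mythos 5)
Your ``cleanest route'' --- recognizing the described sampling as precisely the conditional law $\mathbf{P}(\,\cdot\mid\omega)$ on $\Omega\Sigma_1$, since the weight \eqref{eq:FKraw} is independent of $(\sigma,\sigma')$ and hence uniform over compatible pairs, and then invoking Proposition~\ref{prop:RC} for the $\omega$-marginal --- is exactly the paper's (terse) argument ``follows from the arguments in the proof above,'' and your step (i) checking $(\sigma,\sigma')\in\Sigma$ via the quad condition makes explicit what the paper leaves implicit. One small algebraic slip in your direct marginalization: $p/((1-p)q')=a/(1-a)$, not $a$; since $(1-a)^{-|\omega|}\propto(1-a)^{|\mathsf E\setminus\omega|}$ up to an $\omega$-independent constant, the final weight $a^{|\omega|}(1-a)^{|\mathsf E\setminus\omega|}$ that you arrive at is nonetheless correct.
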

\begin{proof}
This follows from the arguments in the proof above.
\end{proof}

\begin{remark}
\hspace{0cm}
\begin{itemize}
\item For $q'=1$ and $a$ arbitrary, there is no constraint on $\sigma$ and the coupling of $(\sigma,\omega)$ becomes the classical Edwards--Sokal coupling between the q-state Potts model and the $\textnormal{FK}(q)$ random cluster model~\cite{EdwSok}.
\item For $q=1$, $q'=2$ and $a+b=1$, $\eta(\sigma')$ is an \emph{even subgraph} of $\mathsf G$, meaning that the degree of every vertex in $(\mathsf V,\eta(\sigma'))$ is even, and $\omega$ is distributed like the $\textnormal{FK}(2)$ random cluster model. This coupling of $(\omega,\eta(\sigma'))$ is the same as in the work of
Grimmett and Janson~\cite{GriJan}.
\end{itemize}
\end{remark}

We now assume that $M$ is a torus. The necessary Euler's formula takes a slightly more complicated form in this case.
We follow the notation of~\cite[Section 4.3.2]{DCfermion}.
Define $\delta(\omega)\in\{0,1,2\}$ depending on the topology of $\omega$:
\begin{itemize}
\item if $\omega$ contains two non-contractible cycles of different homotopy, then ${\delta(\omega)=2}$;
\item if $\omega$ contains a non-contractible cycle and all such cycles are homotopic, then~$\delta(\omega)=1$;
\item if all connected components of $\omega$ are contractible, then~$\delta(\omega)=0$.
\end{itemize}
Note that $\delta(\omega)+\delta(\omega^{\dagger})=2$.
With this notation Euler's formula reads 
\begin{align} \label{eq:Euler1}
k(\omega^{\dagger})=k(\omega)+|\omega|-\delta(\omega)-|\mathsf V|+1.
\end{align}

Using the same arguments as above, we can prove the following result.

\begin{proposition} \label{prop:RCtorus}
Assume that $M$ is a torus. 
Then the marginal distribution of $\mathbf{P}$ on $\omega$ is given by
\begin{align*}
\mathbf{P}(\omega) \propto ( qq')^{k(\omega)}{q'}^{-\delta(\omega)}p^{|\omega|}(1-p)^{|\mathsf E\setminus \omega|},
\end{align*}
where $p$ is an in Proposition~\ref{prop:RC}.
\end{proposition}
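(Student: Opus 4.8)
The plan is to mirror the proof of Proposition~\ref{prop:RC}, replacing the genus-zero Euler formula by its toroidal counterpart \eqref{eq:Euler1} and tracking the extra topological factor. Starting from \eqref{eq:FKraw}, which still holds verbatim on the torus since $\omega'=\omega^\dagger$ almost surely when $a+b=1$, the task reduces to counting the pairs $(\sigma,\sigma')\in\Sigma$ compatible with a fixed $\omega\subseteq\mathsf E$, i.e.\ such that $(\omega,\omega^\dagger,\sigma,\sigma')\in\Omega\Sigma_1$. As before, compatibility means precisely that $\sigma$ is constant on the clusters of $\omega$ and $\sigma'$ is constant on the clusters of $\omega^\dagger$, and a priori there are no further constraints: on a surface of genus zero the condition $(\sigma,\sigma')\in\Sigma$ is automatic once the constancy conditions hold, and I would check that the same remains true on the torus (the interfaces $\eta(\sigma)$ and $\eta(\sigma')$ live on $\omega'=\omega^\dagger$ and $\eta(\sigma)^*\subseteq\omega$ respectively, so they are disjoint by construction).

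Granting this, the number of valid $\sigma$ is $q^{k(\omega)}$ and the number of valid $\sigma'$ is $q'^{k(\omega^\dagger)}$, so the count of compatible pairs is $q^{k(\omega)}q'^{k(\omega^\dagger)}$. Now I substitute \eqref{eq:Euler1}, which gives
\begin{align*}
q^{k(\omega)}q'^{k(\omega^\dagger)} = q^{k(\omega)}\, q'^{\,k(\omega)+|\omega|-\delta(\omega)-|\mathsf V|+1} = (qq')^{k(\omega)}\, q'^{-\delta(\omega)}\, q'^{\,|\omega|}\times q'^{\,1-|\mathsf V|},
\end{align*}
and the last factor $q'^{\,1-|\mathsf V|}$ is a constant independent of $\omega$. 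Plugging this into \eqref{eq:FKraw} yields
\begin{align*}
\mathbf P(\omega) \propto (qq')^{k(\omega)} q'^{-\delta(\omega)} (a q')^{|\omega|} (1-a)^{|\mathsf E\setminus\omega|} \propto (qq')^{k(\omega)} q'^{-\delta(\omega)} p^{|\omega|}(1-p)^{|\mathsf E\setminus\omega|},
\end{align*}
where the final reparametrization $p = q'/(q'+a^{-1}-1)$, exactly as in Proposition~\ref{prop:RC}, absorbs the factor $q'^{|\omega|}$ together with the $(1-a)^{|\mathsf E\setminus\omega|}$ normalization into $p^{|\omega|}(1-p)^{|\mathsf E\setminus\omega|}$ up to an $\omega$-independent constant. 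This is the claimed formula.

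The only genuinely non-routine point is the verification that the constancy conditions on $\sigma$ and $\sigma'$ are still \emph{equivalent} to $(\sigma,\sigma')\in\Sigma$ on the torus, i.e.\ that no global topological obstruction forces extra identifications between spin values on different clusters; this is where the argument differs from the planar case in spirit even though the conclusion is the same. I would argue that since $\eta(\sigma)\subseteq\omega^\dagger=\omega'$ and $\eta(\sigma')^*\subseteq\omega$ as sets of edges whenever $\sigma$, $\sigma'$ are constant on clusters of $\omega$, $\omega^\dagger$ respectively, the disjointness $\eta(\sigma)^*\cap\eta(\sigma')=\emptyset$ follows from $\omega^*\cap\omega^\dagger{}^*=\emptyset$, which is immediate from the definition $\omega^\dagger=\mathsf E^*\setminus\omega^*$. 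Hence the counting is unobstructed and the rest is the bookkeeping above; I expect no serious difficulty, only care with the constant factors and with the definition of $\delta$.
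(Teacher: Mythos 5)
Your proof is correct and takes essentially the same route the paper implicitly takes (the paper only says ``using the same arguments as above,'' i.e.\ the proof of Proposition~\ref{prop:RC} with the toroidal Euler formula~\eqref{eq:Euler1} substituted). Your extra check that the constancy conditions on $\sigma$ and $\sigma'$ already force $(\sigma,\sigma')\in\Sigma$ is sound --- since $\eta(\sigma)\subseteq\omega^{\dagger}$ gives $\eta(\sigma)^*\subseteq\mathsf E\setminus\omega$ while $\eta(\sigma')\subseteq\omega$, so the two interface sets are disjoint regardless of genus --- although the line ``$\omega^*\cap\omega^{\dagger*}=\emptyset$'' is a slightly odd way to say this, as those two sets live in $\mathsf E^*$ and $\mathsf E$ respectively; what you actually use is the disjointness $\eta(\sigma)^*\cap\eta(\sigma')=\emptyset$ just spelled out.
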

In the case $q=q'$ this distribution is that of the \emph{balanced random cluster model} with parameter $q^2$ as defined in~\cite{DCfermion}.
This model, unlike the standard random cluster model defined on a torus, exhibits duality meaning that if $\omega$ is distributed according to a balanced random cluster model measure,
then so is~$\omega^{\dagger}$. This is clear from the above result, as $\omega'=\omega^{\dagger}$.

\subsection{The staggered six-vertex model for $q=q'=2$} \label{sec:6V}
Let $\mathsf G^{\times}$ be the \emph{medial graph} of~$\mathsf G$ where a vertex is placed at the intersection of each primal edge and its dual, and where two vertices 
are adjacent if the corresponding pair of primal or dual edges share an endpoint. Note that the medial graph is $4$-regular, and its faces are in a natural correspondence with $\mathsf V\cup \mathsf U$ -- the vertices and faces of $\mathsf G$.
Moreover, the dual graph $(\mathsf G^{\times})^*$ is bipartite since the faces of $\mathsf G^{\times}$ corresponding to $\mathsf V$ can only be adjacent to the faces corresponding to~$\mathsf U$ and vice versa.
Also note that the same medial graph is obtained if we start with $\mathsf G^*$ instead of~$\mathsf G$.

Let $\mathcal{O}$ be the set of assignments to each edge of $\mathsf G^{\times}$ an orientation in such a way that there are exactly two incoming and two outgoing edges at each vertex of $\mathsf G^{\times}$.
We say that an element of $ \mathcal{O}$ is an \emph{arrow configuration}.
The \emph{(zero field) staggered} \emph{six-vertex model} with parameters $a,b>0$ (here we assume that the third parameter is $c=1$) is a probability measure on arrow configurations proportional to
\begin{align} \label{eq:def6V}
 a^{N_1} b^{N_2},
\end{align}
where $N_1$ and $N_2$ are the numbers of vertices of $\mathsf G^{\times}$ with the local arrow arrangements of type 1 and 2 respectively~\cite{Pau,Lieb,Lieb1}. 
The three types of local arrangements (each one has two subtypes) are pictured in Fig.~\ref{fig:6v}. 
\begin{figure}
\begin{center}
 \includegraphics[scale=0.9]{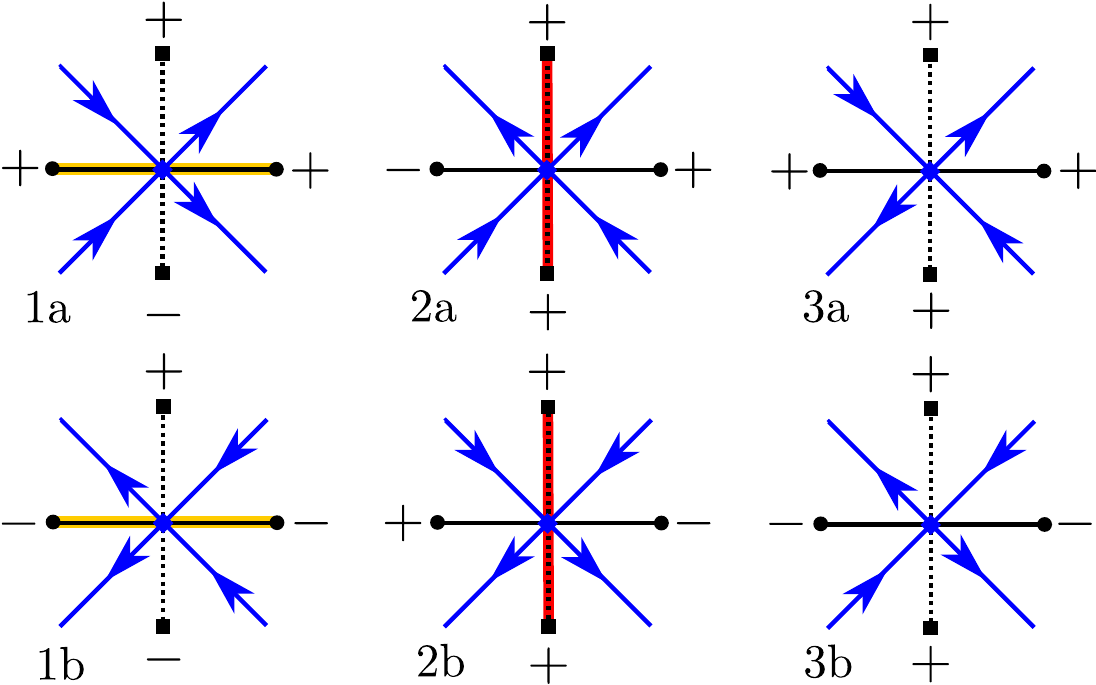}
\caption{A primal edge (solid), its dual edge (dashed), and four corresponding medial edges (blue). The figure shows the three 
types (up to arrow reversal) of local arrow arrangements in the six-vertex model on the medial graph. The sets of yellow primal and red dual edges $\eta'$ and $\eta$ are given by Rys' mapping.
The signs are the values of $\tilde \sigma$ and $\tilde \sigma'$ from \eqref{eq:spindef}. We have $\eta'=\eta(\tilde \sigma')$ and $\eta=\eta(\tilde \sigma)$ }
 \label{fig:6v}
\end{center}
\end{figure}

An observable of interest in the six-vertex model is its \emph{height function} $\tilde H$ defined on the faces of $\mathsf G^{\times}$, or equivalently on $\mathsf V \cup \mathsf U$.
It is given by first fixing its value at a chosen face $u_0$ of $\mathsf G^{\times}$. Then for any other face $u$,
one draws a directed path $\gamma$ in the dual of $\mathsf G^{\times}$ (which is the \emph{quad graph} of $\mathsf G$ and $\mathsf G^*$) connecting $u_0$ to $u$, and one defines
$\tilde H^{\gamma}_{\leftarrow}(u)$ and $\tilde H^{\gamma}_{\rightarrow}(u)$ to be the numbers of arrows in the 
underlying six-vertex configuration that cross $\gamma$ from right to left, and from left to right respectively.
The height at $u$ is then given by
\begin{align} \label{eq:hf}
\tilde H(u) =\tilde H^{\gamma}_{\leftarrow}(u) - \tilde H^{\gamma}_{\rightarrow}(u).
\end{align}
In the case of genus zero, the right-hand side is independent of $\gamma$ since the surface is simply connected and six-vertex configurations form conservative flows.
In higher genus the value of $H(u)$ in general depends on the homotopy class of the path $\gamma$, and can be thought of as a function defined on the faces of the universal cover of~$\mathsf G^{\times}$.
Note that the height function has a fixed parity on $\mathsf G$ and $\mathsf G^*$.

We define $\mathcal{O}^0$ to be the set of arrow configurations for which the increment of the height function along any closed path in the dual of $\mathsf G^{\times}$
is equal to zero $\textnormal{mod} \ 4$. Note that $\mathcal{O}^0=\mathcal O$ if $M$ is of genus zero.

Rys in~\cite{Rys} introduced a correspondence between~$\mathcal{O}^0$ and the set of pairs $(\eta,\eta')$ of certain primal and dual subgraphs that do not intersect (see Fig.~\ref{fig:6v}).
We note that this representation appeared also in the work of Nienhuis \cite{Nie}, and Boutillier and de Tili\`{e}re \cite{BoudeT} where it was used to represent the double Ising model
as a free-fermion six-vertex model. 
\begin{lemma}[Rys' mapping]\label{lem:Rys}
Let $q=q'=2$.
Then for each arrow configuration in $\mathcal{O}^0$, there exist exactly two spin configurations $(\tilde \sigma,\tilde \sigma')\in \Sigma$ that differ by a global sign change, such that $\eta=\eta(\tilde\sigma)$, and $\eta'=\eta(\tilde \sigma')$,
where $\eta'$ and $\eta$ are the sets of primal and dual edges respectively defined from the arrow configuration as in Fig.~\ref{fig:6v}.
\end{lemma}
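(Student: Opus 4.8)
The plan is to define $(\tilde\sigma,\tilde\sigma')$ directly from the six-vertex height function $\tilde H$ and then to verify all the required properties by a local inspection at each medial vertex. Fix an arrow configuration in $\mathcal O^0$. First I would note that $\mathcal O^0$ is defined precisely so that the increment of $\tilde H$ along every closed path in the dual of $\mathsf G^{\times}$ is zero modulo $4$, so that $\tilde H$, \emph{a priori} only a function of the homotopy class of a path, descends to a well-defined function $\mathsf V\cup\mathsf U\to\mathbb Z/4\mathbb Z$, unique up to an additive constant. Since two faces of $\mathsf G^{\times}$ sharing a medial edge --- one in $\mathsf V$ and one in $\mathsf U$, as $(\mathsf G^{\times})^*$ is bipartite --- have heights differing by exactly $\pm1$ (a single arrow crosses the separating edge), $\tilde H$ has constant and opposite parities on $\mathsf V$ and on $\mathsf U$. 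Fixing these parities --- equivalently, so that \eqref{eq:spindef} takes values in $Q$ on $\mathsf V$ and in $Q'$ on $\mathsf U$ --- leaves only the freedom of adding $0$ or $2$ modulo $4$, and the spins $\tilde\sigma\in Q^{\mathsf V}$, $\tilde\sigma'\in Q'^{\mathsf U}$ obtained from $\tilde H\bmod 4$ via \eqref{eq:spindef} are therefore well defined up to the simultaneous change $(\tilde\sigma,\tilde\sigma')\mapsto(-\tilde\sigma,-\tilde\sigma')$. This accounts for the two configurations related by a global sign change.

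The heart of the argument is then a finite local computation, since both of the remaining assertions --- that $(\tilde\sigma,\tilde\sigma')\in\Sigma$ and that $\eta(\tilde\sigma)=\eta$, $\eta(\tilde\sigma')=\eta'$ --- are conditions on individual quads. At a medial vertex, with surrounding faces $v_1,u_1,v_2,u_2$ in cyclic order ($v_i\in\mathsf V$, $u_i\in\mathsf U$), the two-in-two-out rule forces the four consecutive increments of $\tilde H$ around the vertex to consist of two $+1$'s and two $-1$'s; the $\binom{4}{2}=6$ sign patterns are in bijection with the six local arrow arrangements, grouped into the three types of Fig.~\ref{fig:6v}. For a type~$1$ vertex one reads off $\tilde H(v_1)\equiv\tilde H(v_2)$ and $\tilde H(u_1)\equiv\tilde H(u_2)+2\pmod 4$, hence $\tilde\sigma(v_1)=\tilde\sigma(v_2)$ and $\tilde\sigma'(u_1)=-\tilde\sigma'(u_2)$; since $Q'$ is symmetric with two elements the latter gives $\tilde\sigma'(u_1)\ne\tilde\sigma'(u_2)$, so the primal edge $e=\{v_1,v_2\}$ lies in $\eta(\tilde\sigma')$ while $e^*=\{u_1,u_2\}\notin\eta(\tilde\sigma)$ --- exactly the yellow primal / non-red dual pattern of type~$1$ in Fig.~\ref{fig:6v}. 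Type~$2$ is the image of type~$1$ under the primal--dual symmetry of the construction (exchange $\mathsf G\leftrightarrow\mathsf G^*$, $\tilde\sigma\leftrightarrow\tilde\sigma'$, $a\leftrightarrow b$), giving $\tilde\sigma(v_1)\ne\tilde\sigma(v_2)$ and $\tilde\sigma'(u_1)=\tilde\sigma'(u_2)$, i.e.\ $e^*\in\eta(\tilde\sigma)$, $e\notin\eta(\tilde\sigma')$; and at a type~$3$ vertex $\tilde H$ agrees modulo $4$ on $\{v_1,v_2\}$ and on $\{u_1,u_2\}$, so neither $e$ nor $e^*$ lies in its contour set. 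In none of the six cases can $e$ and $e^*$ belong to $\eta(\tilde\sigma')$ and $\eta(\tilde\sigma)$ simultaneously, because a closed walk $v_1,u_1,v_2,u_2,v_1$ with steps $\pm1$ cannot satisfy $|v_2-v_1|=|u_2-u_1|=2$ at once; hence $\eta(\tilde\sigma)^*\cap\eta(\tilde\sigma')=\emptyset$, so $(\tilde\sigma,\tilde\sigma')\in\Sigma$, and summing the local identities over all quads yields $\eta(\tilde\sigma)=\eta$ and $\eta(\tilde\sigma')=\eta'$.

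The step I expect to demand the most care is the global consistency underlying the pointwise definition of $\tilde\sigma,\tilde\sigma'$: one must ensure that these are honest single-valued functions on $\mathsf V\cup\mathsf U$, which is exactly the role of the hypothesis that the arrow configuration lies in $\mathcal O^0$ --- vacuous in genus zero, but a genuine restriction on the torus, as it will be used in Theorem~\ref{thm:selfdual}. The remaining delicate point is to match the normalisation conventions implicit in \eqref{eq:spindef} with the orientations drawn in Fig.~\ref{fig:6v}, so that the two height normalisations differing by $2$ really do produce the pair $(\tilde\sigma,\tilde\sigma')$ and $(-\tilde\sigma,-\tilde\sigma')$ and the local sign patterns agree with the figure; once this bookkeeping is fixed, the proof reduces to the finite case check above.
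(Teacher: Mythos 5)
Your proposal is correct and follows essentially the same route the paper takes: define $\tilde\sigma,\tilde\sigma'$ from $\tilde H\bmod 4$ via \eqref{eq:spindef}, observe that $\mathcal O^0$ is exactly what makes $\tilde H\bmod 4$ single-valued, account for the two configurations by the residual freedom of adding $2$ to the height, and verify $\eta(\tilde\sigma)=\eta$, $\eta(\tilde\sigma')=\eta'$ and the non-intersection constraint by a local check at each medial vertex. The paper dispatches the local check with ``by inspection in Fig.~\ref{fig:6v}''; you have spelled out the inspection (two $+1$'s and two $-1$'s around each medial vertex, so at most one of $|\tilde H(v_2)-\tilde H(v_1)|$, $|\tilde H(u_2)-\tilde H(u_1)|$ equals $2$), which is a welcome expansion but not a different argument.
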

This correspondence is a consequence of the following construction.
We chose to fix the height function $\tilde H$ to be $\pm 1$ with equal probability at a fixed face $u_0\in \mathsf U$. 
We now define spins $\tilde \sigma$ on $\mathsf V$ and $\tilde \sigma'$ on $\mathsf U$ by
\begin{align} \label{eq:spindef}
\tilde{\sigma}(v) = i^{\tilde H(v)},  \qquad \textnormal{and} \qquad \tilde{\sigma}'(u) = i^{\tilde H(u)+1}.
\end{align}
This definition depends only on the values of $\tilde H$ $\textnormal{mod } 4$, and since we consider only configurations from $\mathcal O^0$, $\tilde \sigma$ is a well defined function on the faces of $\mathsf G^{\times}$.
We first claim that $(\tilde \sigma,\tilde \sigma' )\in \Sigma$, which can be checked by inspection in Fig.~\ref{fig:6v}, where the spin $\tilde \sigma'$ at the top was fixed to be $+1$. On the other hand,
knowing the spins $(\tilde \sigma,\tilde \sigma' )\in \Sigma$ one can locally and consistently recover an arrow-configuration constructed as in Fig.~\ref{fig:6v}.

Moreover, by our symmetric choice of $\tilde H(u_0)$ the distribution of 
$(\tilde \sigma,\tilde \sigma')$ is invariant under a global sign reversal, and since the weights from \eqref{eq:def6V} agree with those from \eqref{eq:defspin},
we readily get the following correspondence.
\begin{corollary} \label{cor:6vrep}
The law of $(\tilde \sigma,\tilde \sigma')$ induced by the map \eqref{eq:spindef} from the law of the staggered six-vertex model on $\mathsf G^{\times}$ conditioned on $\mathcal O^0$ has the same distribution as $(\sigma,\sigma')$ under $\mathbf P$. Moreover, the respective height functions $\tilde H$ and $H$ have the same distribution up to a global additive constant.
\end{corollary}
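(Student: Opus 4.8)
The plan is to read the statement off from Rys' mapping (Lemma~\ref{lem:Rys}) together with a direct comparison of the weights \eqref{eq:def6V} and \eqref{eq:defspin}. First I would match the weights. Every vertex of $\mathsf G^{\times}$ sits at the crossing of a unique primal edge $e$ with its dual $e^{*}$, so the vertices of $\mathsf G^{\times}$ are in bijection with the quads of $\mathsf G$. Inspecting Fig.~\ref{fig:6v} one sees that at a vertex of type $1$ the primal edge lies in $\eta'$, at a vertex of type $2$ the dual edge lies in $\eta$, and at a vertex of type $3$ neither occurs; hence $N_1=|\eta'|$ and $N_2=|\eta|$. Since Lemma~\ref{lem:Rys} identifies $\eta'=\eta(\tilde\sigma')$ and $\eta=\eta(\tilde\sigma)$, the six-vertex weight \eqref{eq:def6V} of the arrow configuration underlying $(\tilde\sigma,\tilde\sigma')$ equals $a^{|\eta(\tilde\sigma')|}b^{|\eta(\tilde\sigma)|}$, which is exactly the unnormalised weight of $(\tilde\sigma,\tilde\sigma')$ in \eqref{eq:defspin}.

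Next I would make the correspondence measure-theoretic. By Lemma~\ref{lem:Rys} and the construction following it, the map sending an arrow configuration in $\mathcal O^{0}$ together with the fair coin $\tilde H(u_0)\in\{+1,-1\}$ to the spin pair defined by \eqref{eq:spindef} is a bijection onto $\Sigma$: flipping the coin exchanges the two globally sign-related preimages of a given arrow configuration, while conversely the arrows are recovered locally from $(\tilde\sigma,\tilde\sigma')$ and the coin is recorded in $\tilde\sigma'(u_0)$. Pushing the staggered six-vertex measure on $\mathcal O^{0}$ (times the fair coin) forward along this bijection therefore yields the probability measure on $\Sigma$ assigning $(\sigma,\sigma')$ mass proportional to $a^{|\eta(\sigma')|}b^{|\eta(\sigma)|}$ by the previous paragraph, i.e.\ precisely $\mathbf P$; in passing one finds that $\mathcal Z$ equals twice the partition function of the six-vertex model restricted to $\mathcal O^{0}$.

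For the height functions I would check that $\tilde H$ obeys a sign-flipped version of the rule \eqref{eq:Hdef} for $(\tilde\sigma,\tilde\sigma')$. If $u\in\mathsf U$ and $v\in\mathsf V$ lie in a common quad then $\tilde H(v)$ is even, $\tilde H(u)$ is odd and $\tilde H(u)=\tilde H(v)\pm1$, so a short computation with \eqref{eq:spindef} gives $\tilde\sigma(v)\tilde\sigma'(u)=i^{\tilde H(v)+\tilde H(u)+1}=\tilde H(v)-\tilde H(u)$. Thus $-\tilde H$ satisfies the relations \eqref{eq:Hdef} that define $H$, so $H=-\tilde H$ up to an additive constant. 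Finally, reversing every arrow of a six-vertex configuration is a weight-preserving involution of the staggered model (it fixes each of the three vertex types) that negates every height increment, so $\tilde H$ and $-\tilde H$ have the same law up to an additive constant; combined with $H=-\tilde H$ up to a constant this gives the asserted equality in distribution.

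The substance of the argument is bookkeeping rather than depth, and the one genuinely delicate point is to make the combinatorial and probabilistic conventions match: that the fair coin on $\tilde H(u_0)$ is exactly the global-sign symmetry of $\mathbf P$ and accounts for the factor $2$ relating the partition functions, and that the orientation conventions are chosen so that the signs in the height-function identity and in the arrow-reversal step come out consistently. The count $N_1=|\eta(\tilde\sigma')|$, $N_2=|\eta(\tilde\sigma)|$ is a routine finite verification against Fig.~\ref{fig:6v}.
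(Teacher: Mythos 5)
Your proof is correct and follows the same route the paper intends, which it records only as a one-sentence remark: the weights $a^{N_1}b^{N_2}$ and $a^{|\eta(\tilde\sigma')|}b^{|\eta(\tilde\sigma)|}$ coincide via Rys' mapping, and the fair coin for $\tilde H(u_0)$ accounts for the global-sign degree of freedom identified in Lemma~\ref{lem:Rys}, making the map a weight-preserving bijection onto~$\Sigma$. Your additional verification that $-\tilde H$ (rather than $\tilde H$ itself) satisfies the gradient rule~\eqref{eq:Hdef} for $(\tilde\sigma,\tilde\sigma')$, followed by the observation that global arrow reversal is a weight-preserving involution of the staggered model on $\mathcal O^0$ that negates increments and hence gives $\tilde H\stackrel{d}{=}-\tilde H$, is a careful filling-in of a sign the paper glosses over with ``we readily get''; it is the same argument, spelled out.
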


\begin{remark}
As mentioned before, our results from previous sections generalize to the case of nonhomogeneous weights $a_e$ and $b_e$ that depend on the particular edge.
To obtain the classical (non-staggered) six-vertex model on the square lattice instead of the staggered one, 
one has to assign weights $a$ to the horizontal edges and $b$ to the vertical edges $\mathsf G$.\end{remark}

\subsection{Random currents for $q'=2$ at the the free fermion point $a^2+b^2=1$}
Here we again assume that $M$ is of genus zero.
A \emph{current} is simply a function $\n : \mathsf E \to \{0,1,\ldots,\}$. For a current~$\n$, define $\omega(\n)$ and $\eta(\n)$ to be the set of edges with non-zero 
and odd values of $\n$ respectively. In particular, $\eta\subseteq \omega$. We will say that the pair $(\eta(\n),\omega(\n))$ is the \emph{trace} of the current $\n$. 
We will often identify a current with its trace as the trace contains all the relevant information for the probability measures that we will consider.

Let $\mathcal{E}$ be the collection of sets of edges $\eta$ such that each vertex in $\mathsf V$ has even degree 
in the graph $(\mathsf V,\eta)$. Such $\eta$ are usually called \emph{even subgraphs} of $\mathsf G$. By $\Gamma$ we will denote the set of all possible 
traces $(\omega,\eta)$ of currents such that $\eta\in \mathcal{E}$.
The currents in $\Gamma$ are commonly called \emph{sourceless}. Note that since $M$ is either a sphere or the plane, spin configurations 
$\sigma': U \to \{-1,+1\}$ are in a 2-to-1 correspondence with even subgraphs given by the map $\sigma'\mapsto \eta(\sigma')$. This is not true for higher genera since
then not all even subgraphs (e.g.\ a non-contractible cycle on a torus) can be realized as the set of interfaces of a spin configuration.
In what follows we will write $\eta=\eta(\sigma')$.

\subsubsection*{Single random current for $q=1$}
The single random current measure is induced from the power series expansion of the Ising model partition function (see e.g.\ \cite{LisT}). 
We have the following observation:
\begin{corollary} 
Assume that $M$ is of genus zero. Let $a^2+b^2=1$, $q'=2$ and $q=1$. Then
\[
\mathbf{P}(\eta,\omega)\propto a^{|\eta|} (1-b)^{|\omega\setminus \eta|}b^{|\mathsf E\setminus \omega|}, \qquad (\eta,\omega) \in \Gamma,
\]
which is the law of the sourceless single random current with $a = \tanh J$.
\end{corollary}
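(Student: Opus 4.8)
The plan is to start from Corollary~\ref{cor:marginal}, equation \eqref{eq:od1}, specialize it to $q=1$ and $q'=2$, and then rewrite the $\omega$-marginal obtained by summing over $\sigma'$ in terms of the trace $(\eta,\omega)$ rather than in terms of $\sigma'$ and $\omega$. Since $q=1$, the factor $q^{k(\omega)}$ disappears, so \eqref{eq:od1} reads $\mathbf{P}(\omega,\sigma') \propto a^{|\eta(\sigma')|}(1-b)^{|\omega\setminus\eta(\sigma')|} b^{|\mathsf E\setminus\omega|}\mathbf 1_{\{\eta(\sigma')\subseteq\omega\}}$. The key structural input, recorded at the start of Section~\ref{sec:other} (the random current subsection), is that on a genus-zero surface the map $\sigma'\mapsto\eta(\sigma')$ is a $2$-to-$1$ bijection onto the set $\mathcal E$ of even subgraphs; in particular, the only constraint $\eta(\sigma')$ satisfies is membership in $\mathcal E$, and $|\eta(\sigma')| = |\eta|$ depends only on the trace. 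Hence summing \eqref{eq:od1} over the (two) spin configurations $\sigma'$ with $\eta(\sigma') = \eta$, for each fixed $\eta\in\mathcal E$ with $\eta\subseteq\omega$, gives $\mathbf{P}(\eta,\omega) \propto a^{|\eta|}(1-b)^{|\omega\setminus\eta|} b^{|\mathsf E\setminus\omega|}$ for $(\eta,\omega)\in\Gamma$, which is exactly the claimed formula (the factor $2$ is absorbed into the normalization).

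The second step is to identify this with the sourceless single random current measure and to pin down the relation $a=\tanh J$. Recall that the single random current measure assigns to a current $\n\colon\mathsf E\to\{0,1,\dots\}$ weight $\prod_{e} \beta^{\n(e)}/\n(e)!$ with $\beta = J$ (after the standard high-temperature normalization), restricted to sourceless currents; summing over all currents with a prescribed trace $(\eta,\omega)$ produces, for each edge in $\eta$ a factor $\sinh J$, for each edge in $\omega\setminus\eta$ a factor $\cosh J - 1$, and for each edge in $\mathsf E\setminus\omega$ a factor $1$. Dividing through by $\cosh J$ per edge rescales these to $\tanh J$, $1 - \operatorname{sech} J$, and $\operatorname{sech} J$ respectively. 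So the trace weight is proportional to $(\tanh J)^{|\eta|}(1-\operatorname{sech} J)^{|\omega\setminus\eta|}(\operatorname{sech} J)^{|\mathsf E\setminus\omega|}$, and comparing exponent by exponent with our formula forces $a = \tanh J$, hence $b = \operatorname{sech} J$, and then the free-fermion identity $a^2+b^2 = \tanh^2 J + \operatorname{sech}^2 J = 1$ is automatically consistent. One should also double-check that $1-b = 1-\operatorname{sech} J = \cosh J - 1$ after the same per-edge rescaling, which it is.

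There is essentially one thing to be careful about rather than a genuine obstacle: making sure the ``only constraint is $\eta(\sigma')\in\mathcal E$'' claim is used correctly, i.e.\ that on a sphere or plane every even subgraph contained in $\omega$ arises from exactly two spin configurations, so that the sum over $\sigma'$ in \eqref{eq:od2} genuinely reproduces the sum over even subgraphs that defines $\Gamma$. This is precisely the genus-zero hypothesis, and it is the reason the statement restricts to that case; on higher genus the correspondence fails (a non-contractible cycle is even but not a contour set), and indeed the partition function $Z_{(\mathsf V(\omega),\omega)^*,2}$ in \eqref{eq:od2} would then count even subgraphs that are not contour sets. Beyond that caveat the proof is a direct comparison of two explicit product measures on traces, so I expect the write-up to be short; the only mildly technical point is recalling the standard trace-weight computation for the single random current to read off $a=\tanh J$.
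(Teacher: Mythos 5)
Your proof is correct and takes essentially the same route as the paper: specialize Corollary~\ref{cor:marginal}, eliminate the $q^{k(\omega)}$ factor since $q=1$, use the genus-zero $2$-to-$1$ correspondence between $\sigma'$ and even subgraphs to pass to traces, and then match the resulting trace weights to the single random current. The only difference is that you spell out the per-edge trace-weight computation ($\sinh J$, $\cosh J - 1$, $1$, then normalize by $\cosh J$), which the paper offloads to Lemma~3.1 of \cite{LisT}.
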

\begin{proof}
The formula for the probability is a direct consequence of Corollary~\eqref{cor:marginal}. The identification with the random current measure follows for 
instance from Lemma~3.1 of \cite{LisT}, by setting $a=\tanh J$ and $b=(\cosh J)^{-1}$.
\end{proof}

\subsubsection*{Double random current for $q=2$}
The double random current can be defined as the sum of two i.i.d.\ sourceless single random currents~\cite{GHS}.
It turns out that going from a single current to a double current amounts to changing $q$ from $1$ to~$2$, and $J$ to $2J$:
\begin{corollary} \label{cor:doublecurr}
Assume that $M$ is of genus zero. Let $ x\in (0,1]$ be given by $a=2 x /(1+ x^2)$. Moreover, let $a^2+b^2=1$ and $q'=q=2$. Then 
\[
\mathbf{P}(\eta,\omega) \propto 2^{k(\omega)+|\omega|}  x^{|\eta|}  (x^2)^{|\omega\setminus \eta|}(1- x^2)^{|E\setminus \omega|}, \qquad  (\eta,\omega) \in \Gamma,
\]
which is the law of the sourceless double random current with $ x = \tanh J$, or equivalently $a =\tanh 2J$.
\end{corollary}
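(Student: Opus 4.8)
The plan is to first read the displayed formula off Corollary~\ref{cor:marginal}, and then identify the resulting law with the trace of the sum of two i.i.d.\ sourceless single random currents.

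\textbf{The formula.} I would specialise \eqref{eq:od1} to $q=q'=2$. Since $M$ has genus zero, $\sigma'\mapsto\eta(\sigma')$ is a $2$-to-$1$ map from ${Q'}^{\mathsf U}$ onto $\mathcal E$, so summing \eqref{eq:od1} over the two spin configurations realising a given even subgraph gives, for $(\eta,\omega)\in\Gamma$,
\[
\mathbf P(\eta,\omega)\propto 2^{k(\omega)}\,a^{|\eta|}(1-b)^{|\omega\setminus\eta|}b^{|\mathsf E\setminus\omega|}.
\]
Now $a=2x/(1+x^2)$ together with $a^2+b^2=1$ and $x,b\in(0,1]$ force $b=(1-x^2)/(1+x^2)$, hence $1-b=2x^2/(1+x^2)$. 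Substituting and pulling out the $\omega$-independent factors $(1+x^2)^{-|\mathsf E|}$ and $2^{|\eta|+|\omega\setminus\eta|}=2^{|\omega|}$ yields the claimed expression.

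\textbf{The identification.} Here I would compute directly the weight of a trace $(\eta,\omega)\in\Gamma$ under the double current, i.e.\ the sum over all pairs $(\n_1,\n_2)$ of sourceless currents with $\eta(\n_1+\n_2)=\eta$ and $\omega(\n_1+\n_2)=\omega$ of $\prod_e \tfrac{J^{(n_1)_e}}{(n_1)_e!}\tfrac{J^{(n_2)_e}}{(n_2)_e!}$, where $x=\tanh J$. I would parametrise these pairs by $\eta_1:=\eta(\n_1)$, which must lie in $\mathcal E$; then $\eta(\n_2)=\eta_1\triangle\eta$ lies in $\mathcal E$ automatically, because $\eta\in\mathcal E$ and $\mathcal E$ is a group under symmetric difference. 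Summing the geometric series edgewise, an edge outside $\omega$ contributes $1$, an edge in $\eta$ contributes $\sinh J\cosh J$ (exactly one of $(n_1)_e,(n_2)_e$ is odd, the other even), and an edge in $\omega\setminus\eta$ contributes $\sinh^2 J$ if $e\in\eta_1$ and $\cosh^2 J-1$ if $e\notin\eta_1$; the edge constraints restrict the admissible $\eta_1$ to those in $\mathcal E$ with $\eta_1\subseteq\omega$. The key point is that $\cosh^2 J-1=\sinh^2 J$, so the per-edge contribution no longer depends on $\eta_1$ and the trace weight equals $(\sinh J\cosh J)^{|\eta|}(\sinh^2 J)^{|\omega\setminus\eta|}$ times the number of even subgraphs of $(\mathsf V(\omega),\omega)$. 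That number is $2^{|\omega|-|\mathsf V(\omega)|+c(\omega)}$ with $c(\omega)$ the number of components of $(\mathsf V(\omega),\omega)$, which equals $2^{-|\mathsf V|}\,2^{|\omega|+k(\omega)}$ after accounting for isolated vertices. Dividing each edge factor by $\cosh^2 J$ sends $\sinh J\cosh J\mapsto x$, $\sinh^2 J\mapsto x^2$ and $1\mapsto \tfrac{1}{\cosh^2 J}=1-x^2$, so the trace weight is proportional to $2^{k(\omega)+|\omega|}x^{|\eta|}(x^2)^{|\omega\setminus\eta|}(1-x^2)^{|\mathsf E\setminus\omega|}$, matching the first step; and $a=2x/(1+x^2)=\tanh 2J$.

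The main obstacle is the bookkeeping for the sourceless constraint: one observes the trace of $\n_1+\n_2$ but must impose source-freeness on each of $\n_1,\n_2$ separately, and it is the genus-zero hypothesis (which identifies $\mathcal E$ with the group generated by the sets $\eta(\sigma')$) together with the identity $\cosh^2 J-1=\sinh^2 J$ that collapses the sum over $\eta_1$ to a plain count of even subgraphs. Alternatively one may simply quote a known trace formula for the double random current (the analogue for the doubled current of Lemma~3.1 of \cite{LisT}), in which case only the substitution of the first step remains.
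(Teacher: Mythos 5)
Your proof is correct. Step one, deriving the displayed formula by specialising Corollary~\ref{cor:marginal} to $q=q'=2$, using the 2-to-1 map $\sigma'\mapsto\eta(\sigma')$ and substituting $b=(1-x^2)/(1+x^2)$, is the paper's own first step. For step two the paper simply cites Theorem~3.2 of~\cite{LisT}, whereas you re-derive that trace formula from scratch: you parametrise pairs of sourceless currents with the given trace by $\eta_1=\eta(\n_1)\in\mathcal E$, sum the per-edge geometric series, and observe that $\cosh^2J-1=\sinh^2J$ makes the per-edge factor independent of $\eta_1$, so the $\eta_1$-sum collapses to the count $2^{|\omega|+k(\omega)-|\mathsf V|}$ of even subgraphs contained in $\omega$; normalising by $\cosh^{2|\mathsf E|}J$ then gives the stated weight and $a=2x/(1+x^2)=\tanh 2J$. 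This is a sound, self-contained substitute for the citation (and is essentially a proof of the quoted theorem), which is worth having, though it gains nothing over the reference beyond self-containment. One small misattribution: you locate the role of the genus-zero hypothesis in the collapse of the $\eta_1$-sum, but that computation is purely graph-theoretic and valid on any graph. Genus zero is needed only in step one, to ensure that $\sigma'\mapsto\eta(\sigma')$ is a 2-to-1 map onto all of $\mathcal E$; on higher genus surfaces some even subgraphs are not realised as interfaces of a spin configuration, so $\mathbf P(\eta,\omega)$ could not be read off from \eqref{eq:od1} in this way.
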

\begin{proof}
Again, the formula for the probability follows directly from Corollary~\eqref{cor:marginal}. The identification with the double random current measure follows 
from Theorem~3.2 of \cite{LisT}, by setting $x=\tanh J$ and using the fact that since $a^2+b^2=1$, we have $b = {(1- x^2)}/{(1+ x^2)}$. 
\end{proof}

\begin{remark}
Recall that the XOR-Ising model is the pointwise product of two i.i.d. Ising model spin configurations~\cite{Wil,Dub,BoudeT}.
It is known that in the free fermion case $a^2+b^2=1$, the distribution of $\sigma$ and $\sigma'$ is that of the XOR-Ising model and its dual XOR-Ising model~\cite{BoudeT}.
It follows from Corollary~\ref{cor:doublecurr} and Proposition~\ref{prop:coupling} that an independent assignment of a $\pm 1$ spin to the clusters of a double random current
yields the XOR-Ising model configuration. This result was, to our knowledge, first observed during a discussion of Roland Bauerschmidt, Hugo Duminil-Copin, Aran Raoufi and the 
author at IHES in 2017, and was the main inspiration for the 
considerations in this article.
\end{remark}

\subsection{Loop $O(n)$ model for $q=n$, $q'=2$ and $b=1$}
Assume that $M$ is of genus zero, and let $\mathsf G$ be a $3$-regular graph e.g.\ a piece of the hexagonal lattice. For each spin configuration, $\sigma': \mathsf U\to \{-1,1\}$, $\eta(\sigma')$ is a collection of 
disjoint loops on $\mathsf G$, and each such collection of loops corresponds to exactly two spin configurations. This implies that for $b=1$, the marginal distribution on $\eta=\eta(\sigma')$ is given by
\[
\mathbf{P}(\eta) \propto n^{k(\eta)} a^{|\eta|} \propto n^{\# \text{ loops in }\eta }  x ^{|\eta|}, \qquad \eta \in \mathcal{E},
\]
where $ x = a/n$, and where $\mathcal E$ is the set of all collections of disjoint loops on~$\mathsf G$. 
In the second identity, we used the fact that each vertex of $\mathsf G$ is either isolated in $(\mathsf V,\eta)$ or belongs to exactly two edges of $\eta$
This is the law of the \emph{loop $O(q)$ model} with parameter $ x$ (see e.g.\ \cite{PelSpi} for an exposition on the subject). 

Note that the spin model $\sigma'$ for $q=1$ is the standard Ising model. We note that for real valued $n\geq 1$, this model has been studied in \cite{DCetal}, 
where its FKG property (valid for $ x\leq \frac1{\sqrt{n}}$) was used to prove 
existence of macroscopic loops at the Nienhuis' critical point.

\section{$\sigma$ and $\sigma'$ as classical spin models} \label{sec:unconstrained}
In this section we assume that $M$ is of genus zero.
It turns out that the spin model $\sigma$ can be represented 
via a classical unconstrained spin system, which is a special case of the model of Domany and Riedel~\cite{DomRie}, where two Potts models are coupled via a general four-spin interaction. This property is a consequence of duality in the Potts model,
and is a generalization to arbitrary $q$ and $q'$ of the six-vertex model representation of the Ashkin-Teller model~\cite{Nie} (see also~\cite{Weg,BoudeT}).
We note that similar ideas applied to the model from~\cite{DomRie} are present in~\cite{Perk}.

We consider a spin model on configurations $(\s,\s')\in Q^{\mathsf V}\times{Q'}^{\mathsf V}$ given by the Gibbs-Boltzmann distribution
\begin{align} \label{eq:sspin}
\tilde \mu(\s,\s') \propto \exp \Big( \sum_{\{v_1,v_2\} \in \mathsf E} \delta_{\s(v_1),\s(v_2)} \big (\alpha +\beta \delta_{\s'(v_1),\s'(v_2)}\big)\Big ),
\end{align}
where 
\begin{align}
\alpha =\ln\big(\tfrac{1-a}b\big) \qquad \text{and} \qquad \beta= \ln \big(1+\tfrac{q'a}{1-a} \big), \label{eq:parameters}
\end{align}
and where $\delta_{x,y}=1$ if $x=y$ and $\delta_{x,y}=0$ otherwise.
Note that $\beta$ is always positive, and $\alpha$ is positive if and only if $a+b\leq 1$. 
These parameters form a two-dimensional subspace of the parameters of the general model from~\cite{DomRie}.

\begin{remark}
This spin model can be defined on any finite graph, not necessarily embedded in a surface.
\end{remark}

The following values of parameters are of special interest:
\begin{itemize}
\item The case $\alpha=0$, or equivalently $a+b=1$, corresponds to a ferromagnetic $qq'$-state Potts model $\tilde \s$ with $J=\beta>0$. Then, $\s= \tilde \s \ (\textnormal{mod}\ q)$ in distribution.
\item For $q'=2$, the case $\alpha=-\beta$, or equivalently $b-a=1$, on a bipartite graph corresponds to an antiferromagnetic $2q$-state Potts model $\tilde \s$ with $J=-\beta$.
Indeed, if we flip the value of the $\s'$ spin on all black vertices and call the resulting spin configuration $\s''$, we have $1-\delta_{\s'(v_1),\s'(v_2)} =\delta_{\s''(v_1),\s''(v_2)}$.
Again, $\s= \tilde \s \ (\textnormal{mod}\ q)$ in distribution.
\item The case $2\alpha=-\beta$, or $a^2+b^2=1$, results in symmetric energy levels $-\tfrac{\beta}2,0,\tfrac{\beta}2$ per bond. For $q=q'=2$, it corresponds to the free fermion point  in the six-vertex model. 
In this case one can write the Hamiltonian as a sum of two independent Hamiltonians, and as a result represent the system as two statistically independent copies of the 2-state Potts model (the Ising model).
For $q'=2$ and $q>2$, the model is equivalent to the $q$-component version of the cubic model of Kim, Levy and Uffer~\cite{cubic}.
\end{itemize}

Note that by definition~\eqref{eq:sspin}, conditioned on the spins $\s$, the spins $\s'$ do not interact along edges with a different value of $\s$ assigned to both endpoints. This is exactly the dual picture (zero coupling constant) of the hard-core constraint~\eqref{eq:divfree} (infinite coupling constant) for the spin model $\sigma$.
Hence, the following identification of $\sigma$ and $\s$ which is the main result of this section should not be surprising.
\begin{theorem} \label{thm:unconstrained}
Assume that $M$ is of genus zero.
Then the distributions of $\sigma$ under $\mathbf P$, and of $\s$ under $\tilde \mu$ are the same.
\end{theorem}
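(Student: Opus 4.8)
The plan is to establish the identity of the two laws by expanding the partition function of $\tilde\mu$ in \eqref{eq:sspin} and recognizing it as a sum over the constrained spin model $\mathbf{P}$, using planar duality for the dual Potts model. First I would fix a primal spin configuration $\s \in Q^{\mathsf V}$ and look at the conditional law of $\s'$ under $\tilde\mu$. By \eqref{eq:sspin}, conditioned on $\s$, the spins $\s'$ interact as a $q'$-state ferromagnetic Potts model with coupling constant $\beta$, but \emph{only along the edges where $\s$ takes equal values on both endpoints} — i.e.\ the Potts model lives on the graph $\mathsf G \setminus \eta(\s)$ obtained by deleting the edges of $\eta(\s)$ (here I use $\eta(\s)\subseteq\mathsf E$ for the set of primal edges whose endpoints get different $\s$-spin, consistent with the paper's notation for $\eta(\sigma')$). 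Summing out $\s'$, the contribution of $\s$ is proportional to $e^{\alpha|\eta(\s)^c|} \cdot Z_{\mathsf G\setminus\eta(\s),\,q'}(e^{\beta}-1)$ up to the choice of normalization of the Potts weight in \eqref{def:Potts}, so everything reduces to computing this partition function of a dual Potts model on a planar graph.

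The key step is then to apply the planar duality (Kramers–Wannier-type) relation for the $q'$-state Potts partition function: $Z_{\mathsf G\setminus\eta(\s),\,q'}(y)$ equals, up to an explicit power of $q'$ and a factor depending only on $|\mathsf V|,|\mathsf E|$, a sum over subgraphs of the dual graph of $\mathsf G\setminus\eta(\s)$. Concretely, expanding $Z$ in the high-temperature (even-subgraph / random-cluster–dual) representation gives a sum over $\sigma'$-configurations on $\mathsf U$, and — this is the crucial geometric point — $\sigma'$ is allowed to have an interface across a dual edge $e^*$ precisely when the primal edge $e$ was \emph{not} deleted, i.e.\ when $e\notin\eta(\s)$, which is exactly the constraint $\eta(\sigma')^*\cap\eta(\s)=\emptyset$ defining $\Sigma$ in \eqref{def:sigma}. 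After substituting the values of $\alpha$ and $\beta$ from \eqref{eq:parameters} — chosen precisely so that $e^{-\beta}=\tfrac{1-a}{1-a+q'a}$ and the duality bookkeeping produces the weights $a^{|\eta(\sigma')|}b^{|\eta(\s)|}$ — one reads off that the joint weight of $(\s,\sigma')$ under the expansion is a constant times $a^{|\eta(\sigma')|}b^{|\eta(\s)|}\mathbf 1_{(\s,\sigma')\in\Sigma}$, i.e.\ exactly \eqref{eq:defspin}. Marginalizing over $\sigma'$ on both sides gives the claimed equality of the laws of $\sigma$ and $\s$.

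I expect the main obstacle to be getting the duality relation for $Z_{\mathsf G\setminus\eta(\s),\,q'}$ stated in exactly the right normalization: the graph $\mathsf G\setminus\eta(\s)$ is not the full graph $\mathsf G$, so its planar dual is $\mathsf G^*$ with the edges $\eta(\s)^*$ \emph{contracted}, and one must check that the power of $q'$ coming from Euler's formula ($q'^{\#\text{faces}}$ versus $q'^{\#\text{components}}$) combines with the edge-count prefactors to leave only an $\s$-independent constant — otherwise the two marginals would differ by a reweighting. The genus-zero hypothesis enters exactly here, since it is what makes $\sigma'\mapsto\eta(\sigma')$ essentially a $q'$-to-$1$ correspondence with even subgraphs / dual-Potts configurations (cf.\ the discussion preceding Corollary~\ref{cor:marginal} and in Section~\ref{sec:other}), and what makes Euler's formula take its simple form. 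A secondary bit of care is the rescaling of the Potts weight by $e^{-J|E|}$ noted after \eqref{def:Potts}: one should track this consistently so that the normalizations of $\tilde\mu$ and of $\mathbf{P}$ match up to the harmless global constant. Once the duality identity is pinned down with correct constants, the rest is a direct substitution of \eqref{eq:parameters}.
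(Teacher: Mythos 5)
Your route is correct, and it is genuinely different from the paper's. The paper proves the theorem by computing \emph{both} sides and matching them against a common third expression: it expands $\tilde\mu(\s)$ by summing out $\s'$ (giving a product of Potts partition functions on the constant-$\s$ subgraphs, Eq.~\eqref{eq:prob1}), and separately expands $\mathbf P(\sigma)$ as a sum over the percolation configurations $\omega$ consistent with $\sigma$, using Proposition~\ref{prop:coupling}, the primal-Potts formula of Proposition~\ref{prop:negform} (itself obtained by dualizing $(\mathsf V(\omega),\omega)$), and the resummation Lemma~\ref{lem:resum}, arriving at Eq.~\eqref{eq:prob2}; the two expressions turn out to be identical. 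Your argument instead avoids $\omega$ and Lemma~\ref{lem:resum} entirely: after the same first step, you apply Kramers--Wannier duality \emph{once}, to $Z_{(\mathsf V,\,\mathsf E\setminus\eta(\s)^*),\,q'}(e^\beta-1)$, and observe that its dual graph is precisely $\mathsf G^*$ with the components of $\eta(\s)$ contracted, so the dual partition function is a sum over $\sigma'\in Q'^{\mathsf U}$ constrained to be constant across $\eta(\s)$, i.e.\ over $\sigma'$ with $(\s,\sigma')\in\Sigma$, at dual coupling $e^{-J^*}=a$; this gives $\sum_{(\s,\sigma')\in\Sigma}a^{|\eta(\sigma')|}$. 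I checked the bookkeeping you flag as the main obstacle: with $\alpha,\beta$ as in~\eqref{eq:parameters}, the prefactor per edge works out to $\tfrac{e^{\alpha+\beta}}{q'}\cdot\tfrac{x+q'}{x+1}=\tfrac1b$ (here $x=e^\beta-1$), so $\tilde\mu(\s)\propto b^{|\eta(\s)^*|}\sum_{(\s,\sigma')\in\Sigma}a^{|\eta(\sigma')|}\propto\mathbf P(\s)$ as desired, and the factor $q'^{|\mathsf V|-1}$ that remains is $\s$-independent. Your approach is shorter and more self-contained (one dualization, no intermediate percolation coupling), while the paper's route has the advantage of establishing Proposition~\ref{prop:negform} and Lemma~\ref{lem:resum} along the way, which are used elsewhere. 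Two small imprecisions in your write-up: the constraint should read $\eta(\s)\cap\eta(\sigma')=\emptyset$ as subsets of $\mathsf E$ in your convention (not $\eta(\sigma')^*\cap\eta(\s)$), and ``a $q'$-to-$1$ correspondence with even subgraphs'' only describes the case $q'=2$ — what you actually use for general $q'$ is that Potts configurations on the quotient graph $\mathsf G^*/\eta(\s)$ lift bijectively to $\sigma'$ constant on components of $\eta(\s)$.
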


Before the proof, we need to recall the high-temperature expansion of the Potts model partition function and its planar duality. 
Let $G=(V,E)$ be a finite and not necessarily connected graph embedded in $M$, and let $G^*=(U,E^*)$ be its dual graph. 
We will often drop the parameter $q$ from the notation and write $Z_G$ for $Z_{G,q}$.
Recall that $x+1=e^J$. We have
\begin{align} \nonumber
(x+1)^{|E|}Z_G(x) &= \sum_{\s \in Q^V }\exp \Big(J \sum_{\{v_1,v_2\}\in E} \delta_{\s(v_1),\s(v_2)} \Big)  \\ \nonumber
&= \sum_{\s \in Q^V } \prod_{\{ v_1,v_2\}\in E} (1+\delta_{\s(v_1),\s(v_2)} x) \\ \nonumber
&= \sum_{\s \in Q^V } \sum_{\xi \subseteq E} x^{|\xi|}\mathbf{1}\{ \s \ \textnormal{constant on clusters of } \xi \} \\ \label{eq:highT}
&= \sum_{\xi \subseteq E} x^{|\xi|}q^{k(\xi)},
\end{align}
where $k(\xi)$ is the number of {clusters} of $\xi$ in $G$, i.e., connected components of the graph $(V,\xi)$ including isolated vertices.
Again recall that Euler's formula for planar graphs reads
\begin{align} \label{eq:Euler}
k(\xi) = f(\xi) -|\xi|+|V|-1,
\end{align}
where $f(\xi)$ is the number of faces of $\xi$. Note that if we denote $\xi^{\dagger}=E^*\setminus \xi^*$, then $f(\xi)=k(\xi^{\dagger})$ is the number of clusters of $\xi^{\dagger}$ in $G^*$. 
Hence by \eqref{eq:highT} we can write
\begin{align} \nonumber
Z_G(x)  &= (x+1)^{-|E|}q^{|V|-1}\sum_{\xi \subseteq E} (\tfrac{x}q)^{|\xi|}q^{k(\xi^{\dagger})} \\ \nonumber
&= q^{|V|-|E|-1}\big( \tfrac{x}{x+1}\big)^{|E|}\sum_{\xi^{\dagger} \subseteq E^*} (x^*)^{|\xi^{\dagger}|}q^{k(\xi^{\dagger})} \\
&=q^{|V|-|E|-1}\big( \tfrac{x}{x+1}\big)^{|E|} (x^*+1)^{|E^*|}Z_{G^*}(x^*) \nonumber \\ 
&=q^{|V|-|E|-1}\big( \tfrac{x+q}{x+1}\big)^{|E|} Z_{G^*}(x^*)  \label{eq:Pottsduality},
\end{align}
where $x^* = q/x$, and where in the third equality we again used \eqref{eq:highT}.

Based on Corollary~\ref{cor:marginal}, we can now prove an intermediate result which gives a formula for the probability of 
$\omega$ in terms of the (primal) Potts model partition function $Z_{(\mathsf V,\omega),q'}$.
\begin{proposition} \label{prop:negform}Assume that $M$ is of genus zero. Then, the marginal distribution of $\mathbf P$ on $\omega$ is given by 
\begin{align}  \label{eq:negform}
\mathbf P (\omega) \propto q^{k(\omega)} \big(\tfrac{1-(1-q')a-b}b\big)^{|\omega|}Z_{(\mathsf V,\omega),q'}(\tfrac{q'a}{1-a-b}), \qquad \omega \subseteq \mathsf E.
\end{align}
\end{proposition}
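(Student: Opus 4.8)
The plan is to start from the formula for $\mathbf P(\omega)$ already obtained in Corollary~\ref{cor:marginal}, namely
\begin{align} \label{eq:startingpoint}
\mathbf P (\omega) \propto q^{k(\omega)} \big(\tfrac{1-b}b \big)^{|\omega|}Z_{ (\mathsf V({ \omega}),\omega)^*,q'}(\tfrac{1-a-b}{a}),
\end{align}
and to transform the dual partition function $Z_{(\mathsf V(\omega),\omega)^*,q'}$ into a primal one $Z_{(\mathsf V,\omega),q'}$ using the planar duality relation \eqref{eq:Pottsduality}. The subtlety is that \eqref{eq:Pottsduality} is stated for a graph $G$ and its dual $G^*$, and here the relevant ``primal'' graph is $(\mathsf V(\omega),\omega)$ while the partition function in \eqref{eq:startingpoint} is that of its dual. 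So I would apply \eqref{eq:Pottsduality} with $G = (\mathsf V(\omega),\omega)$ and $G^* = (\mathsf V(\omega),\omega)^*$, solving for $Z_{G^*}(x^*)$ in terms of $Z_G(x)$. Concretely, with $x^* = \tfrac{1-a-b}{a}$ one has $x = q'/x^* = \tfrac{q'a}{1-a-b}$, which is exactly the argument appearing in the target formula \eqref{eq:negform}; this is the main structural check that makes the whole computation work.

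The second point of care is bookkeeping of the combinatorial prefactors $q'^{|V|-|E|-1}$ and $\big(\tfrac{x+q'}{x+1}\big)^{|E|}$ that \eqref{eq:Pottsduality} produces, where now $V$, $E$ refer to $(\mathsf V(\omega),\omega)$, i.e. $|V| = |\mathsf V(\omega)|$ and $|E| = |\omega|$. The factor $q'^{|\mathsf V(\omega)|-|\omega|-1}$ is \emph{not} a constant in $\omega$, since $|\mathsf V(\omega)|$ depends on $\omega$; however, I expect it to be absorbed correctly once one rewrites $|\mathsf V(\omega)|$ via the relation $k(\omega) = (|\mathsf V|-|\mathsf V(\omega)|) + k\big((\mathsf V(\omega),\omega)\big)$ --- isolated vertices of $\mathsf G$ not touched by $\omega$ each contribute a separate cluster --- so that $q^{k(\omega)}$ together with the stray $q'$-powers reorganizes into the stated $q^{k(\omega)}$ (unchanged, since that power is in $q$, not $q'$) times a clean power of $\big(1-(1-q')a-b\big)/b$ per edge. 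Here one should double-check that $Z_{(\mathsf V(\omega),\omega),q'}$ and $Z_{(\mathsf V,\omega),q'}$ differ only by the factor $q'^{\#\{\text{isolated vertices}\}}$, which again merges into the prefactor. The key algebraic identity to verify is that $\tfrac{1-b}{b} \cdot \tfrac{x + q'}{x+1}$, with $x = \tfrac{q'a}{1-a-b}$, simplifies to $\tfrac{1-(1-q')a-b}{b}$; expanding, $x+1 = \tfrac{q'a + 1 - a - b}{1-a-b} = \tfrac{1-(1-q')a-b}{1-a-b}$ and $x + q' = \tfrac{q'a + q'(1-a-b)}{1-a-b} = \tfrac{q'(1-b)}{1-a-b}$, so $\tfrac{x+q'}{x+1} = \tfrac{q'(1-b)}{1-(1-q')a-b}$, and multiplying by $\tfrac{1-b}{b}$ and tracking where the extra $q'^{|\omega|}$ goes gives the claim after absorbing $q'$-powers into the normalization.

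The main obstacle I anticipate is purely organizational rather than conceptual: keeping the $\omega$-dependent powers of $q'$ straight through the duality substitution, in particular making sure that (i) the discrepancy between $(\mathsf V(\omega),\omega)$ and $(\mathsf V,\omega)$ as the ``primal graph'', (ii) the $q'^{|V|-|E|-1}$ prefactor in \eqref{eq:Pottsduality}, and (iii) the $\big(\tfrac{x}{x+1}\big)^{|E|}$ versus $\big(\tfrac{x+q'}{x+1}\big)^{|E|}$ forms of the duality all conspire so that only $q^{k(\omega)}$ (a power of $q$, untouched by this argument), an edge-weight, and the primal partition function survive, everything else being an $\omega$-independent constant folded into $\propto$. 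Once the exponents are matched the result follows; I do not expect any genuinely hard step, since both the high-temperature expansion \eqref{eq:highT} and planar duality \eqref{eq:Pottsduality} are already in hand and Corollary~\ref{cor:marginal} does the probabilistic work.
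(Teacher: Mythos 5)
Your overall plan is exactly the one the paper follows: start from Corollary~\ref{cor:marginal}, apply planar duality~\eqref{eq:Pottsduality} to the graph $G=(\mathsf V(\omega),\omega)$ to turn $Z_{G^*}(x^*)$ into $Z_G(x)$ with $x=q'/x^*=\tfrac{q'a}{1-a-b}$, and then pad with isolated vertices to replace $Z_{(\mathsf V(\omega),\omega),q'}$ by $Z_{(\mathsf V,\omega),q'}$. However, the ``key algebraic identity'' you state is false, and the error is not merely notational: you wrote
\[
\tfrac{1-b}{b}\cdot\tfrac{x+q'}{x+1}=\tfrac{1-(1-q')a-b}{b},
\]
but your own intermediate computation shows $\tfrac{x+q'}{x+1}=\tfrac{q'(1-b)}{1-(1-q')a-b}$, so the product on the left is $\tfrac{q'(1-b)^2}{b(1-(1-q')a-b)}$, which is a different rational function of $a,b$. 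The source of the mismatch is the direction of the duality. Equation~\eqref{eq:Pottsduality} expresses $Z_G$ in terms of $Z_{G^*}$; since you need the reverse, the correct per-edge factor after solving is $\big(\tfrac{x+1}{x+q'}\big)^{|\omega|}$, not $\big(\tfrac{x+q'}{x+1}\big)^{|\omega|}$, and the prefactor becomes $q'^{-(|\mathsf V(\omega)|-|\omega|-1)}$.

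Combining this prefactor with the padding factor $q'^{|\mathsf V(\omega)|-|\mathsf V|}$ yields $q'^{|\omega|+1-|\mathsf V|}$, of which the $q'^{1-|\mathsf V|}$ part is a genuine constant but the $q'^{|\omega|}$ part is a per-edge weight that cannot be ``absorbed into the normalization'' as you suggest --- it depends on $\omega$. The correct assembly is
\[
\tfrac{1-b}{b}\cdot q'\cdot\tfrac{x+1}{x+q'}=\tfrac{1-b}{b}\cdot\tfrac{1-(1-q')a-b}{1-b}=\tfrac{1-(1-q')a-b}{b},
\]
where the middle step is the identity $q'\tfrac{x+1}{x+q'}=\tfrac{1-(1-q')a-b}{1-b}$ that the paper records. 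So: keep the plan, but invert the ratio and multiply by one $q'$ per edge rather than treating all stray $q'$-powers as constants. The auxiliary relation $k(\omega)=(|\mathsf V|-|\mathsf V(\omega)|)+k((\mathsf V(\omega),\omega))$ that you invoke is not actually needed; the power $q^{k(\omega)}$ passes through untouched since it is a power of $q$, not $q'$, and the isolated-vertex adjustment is handled entirely by $Z_{(\mathsf V,\omega),q'}=q'^{|\mathsf V|-|\mathsf V(\omega)|}Z_{(\mathsf V(\omega),\omega),q'}$ as you correctly note.
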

\begin{proof} 
Let $x^*=\tfrac{1-a-b}{a}$  and $x=\tfrac{q'}{x^*}=\tfrac{q'a}{1-a-b}$, and note that 
\[
Z_{ (\mathsf V(\omega),\omega)}=q^{|V( \omega)|-|\mathsf V|} Z_{(\mathsf V, \omega)}.
\]
Applying duality we get
\begin{align} \label{eq:duality2}
Z_{  (\mathsf V(\omega),\omega)}(x) = q'^{|V( \omega)| - | \omega|-1} \big( \tfrac{x+q'}{x+1}\big)^{| \omega|} Z_{  (\mathsf V(\omega),\omega)^*}(x^*).
\end{align}
Therefore, by Corollary~\ref{cor:marginal} we have
\begin{align*}
q^{-k(\omega)} \big(\tfrac{b}{1-b}\big)^{|\omega|}  \mathbf P (\omega) & \propto Z_{  (\mathsf V(\omega),\omega)^*}(x^*) \\
& =   {q'}^{-|V( \omega)| + | \omega|+1}\big( \tfrac{x+1}{x+q'}\big)^{| \omega|} Z_{  (\mathsf V(\omega),\omega)}(x) \\
&={q'}^{-|V( \omega)| + | \omega|+1+| V( \omega)|-|\mathsf V|}  \big( \tfrac{x+1}{x+q'}\big)^{| \omega|}  Z_{(\mathsf V,\omega)}(x) \\
&= \big(\tfrac{1-(1-q')a-b}{1-b}\big)^{| \omega|}   Z_{(\mathsf V,\omega)}(x),
\end{align*}
where in the last equality we used the identity $q' \tfrac{x+1}{x+q'}=\tfrac{1-(1-q')a-b}{1-b}$.

\end{proof}
Note that for $a+b>1$ and $b\leq 1$, we have that $x< -1$, and hence the Potts model corresponding to the partition function $Z_{(\mathsf V, \omega),q'}(x)$ has a complex-valued coupling constant $J$.
To complete the proof of Theorem~\ref{thm:unconstrained} we need the following elementary lemma which effectively turns this coupling constant into a real-valued one.
\begin{lemma} \label{lem:resum}
Let $G=(V,E)$ be a finite graph. Then for all $t\neq-1$, we have
\begin{align*}
\sum_{\xi \subseteq E }  t^{|\xi|}(x+1)^{|\xi|}  Z_{(V,\xi)} (x)  =(1+t(x+1))^{|E|}  Z_{G}\big(\tfrac{tx}{1+t}\big).
\end{align*}
\end{lemma}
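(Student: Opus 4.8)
The plan is to prove the identity by expanding the right-hand side using the high-temperature expansion \eqref{eq:highT} for each $Z_{(V,\xi)}(x)$, and then reorganizing the double sum by grouping terms according to the union of the two edge subsets involved. First I would apply \eqref{eq:highT} to write $(x+1)^{|\xi|}Z_{(V,\xi)}(x) = \sum_{\zeta\subseteq \xi} x^{|\zeta|}q^{k(\zeta)}$, where the number of clusters $k(\zeta)$ is computed in the graph $(V,\xi)$; but since isolated vertices are counted and $\zeta\subseteq\xi\subseteq E$, this cluster count is the same whether computed in $(V,\xi)$ or in $(V,E)$, so I may write it simply as $k(\zeta)$ in $G$. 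Hence the left-hand side becomes
\begin{align*}
\sum_{\xi\subseteq E} t^{|\xi|}\sum_{\zeta\subseteq\xi} x^{|\zeta|}q^{k(\zeta)} = \sum_{\zeta\subseteq E} x^{|\zeta|}q^{k(\zeta)}\sum_{\xi:\ \zeta\subseteq\xi\subseteq E} t^{|\xi|}.
\end{align*}

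Next I would evaluate the inner sum: writing $\xi = \zeta\cup\rho$ with $\rho\subseteq E\setminus\zeta$ we get $\sum_{\rho\subseteq E\setminus\zeta} t^{|\zeta|+|\rho|} = t^{|\zeta|}(1+t)^{|E|-|\zeta|}$. Substituting back, the left-hand side equals
\begin{align*}
(1+t)^{|E|}\sum_{\zeta\subseteq E} \Big(\tfrac{tx}{1+t}\Big)^{|\zeta|} q^{k(\zeta)}.
\end{align*}
Finally I would recognize the remaining sum, via \eqref{eq:highT} applied with parameter $\tfrac{tx}{1+t}$, as $\big(1+\tfrac{tx}{1+t}\big)^{|E|} Z_G\big(\tfrac{tx}{1+t}\big) = \big(\tfrac{1+t+tx}{1+t}\big)^{|E|} Z_G\big(\tfrac{tx}{1+t}\big)$, and combining the powers of $(1+t)$ gives $(1+t(x+1))^{|E|} Z_G\big(\tfrac{tx}{1+t}\big)$, as desired. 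The hypothesis $t\neq -1$ is exactly what is needed to divide by $(1+t)$ and to make sense of $\tfrac{tx}{1+t}$.

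The only real subtlety — and the step I would be most careful about — is the bookkeeping of cluster counts: making sure that $k(\cdot)$ for a subset $\zeta$ of edges does not depend on which ambient edge set ($\xi$ versus $E$) one regards $\zeta$ as sitting inside, which holds precisely because $k$ counts connected components of $(V,\zeta)$ including isolated vertices and the vertex set $V$ is fixed throughout. Everything else is an elementary interchange of finite sums and the binomial-type identity for the inner sum, so there is no genuine analytic obstacle; the lemma is purely formal.
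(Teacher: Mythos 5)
Your proof is correct and follows essentially the same route as the paper's: expand $(x+1)^{|\xi|}Z_{(V,\xi)}(x)$ via the high-temperature expansion \eqref{eq:highT}, swap the order of summation, evaluate the inner geometric/binomial sum to get $t^{|\zeta|}(1+t)^{|E|-|\zeta|}$, and recognize the result as $(1+t)^{|E|}$ times the high-temperature expansion of $Z_G$ at $\tfrac{tx}{1+t}$. The one point you flag explicitly — that the cluster count $k(\zeta)$ is insensitive to whether the ambient edge set is $\xi$ or $E$, because isolated vertices are always counted against the fixed $V$ — is indeed the small subtlety and is correctly resolved; the paper relies on it implicitly.
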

\begin{proof}
We have
\begin{align*}
\sum_{\xi \subseteq E } t^{|\xi|}(x+1)^{|\xi|}  Z_{(V,\xi)} (x)& = \sum_{\xi \subseteq E }t^{|\xi|} \sum_{\xi'\subseteq \xi} x^{|\xi'|}q^{k(\xi')} \\
&= \sum_{\xi' \subseteq E }\sum_{\xi \supseteq \xi'}  t^{|\xi|}  x^{|\xi'|}q^{k(\xi')} \\
&= \sum_{\xi' \subseteq E } (1+t)^{|E|-|\xi'|}  (tx)^{|\xi'|}q^{k(\xi')} \\
& = (1+t)^{|E|} \sum_{\xi' \subseteq E } \big(\tfrac{tx}{1+t})^{|\xi'|}q^{k(\xi')}\\
&=(1+t(x+1))^{|E|} Z_{G}\big(\tfrac{t x}{1+t}\big),
\end{align*}
where in the first and last equality we applied the high-temperature expansion~\eqref{eq:highT}.
\end{proof}

We can finally identify the distribution of $\sigma$ as that of $\s$.
\begin{proof}[Proof of Theorem~\ref{thm:unconstrained}]

For $i \in Q$ and $\s\in Q^{\mathsf V}$, we define 
\[
E_i(\s) = \{ \{v_1,v_2\}\in \mathsf E: \ \s(v_1)=\s(v_2)=i \} \quad \text{and} \quad E(\s) = \bigcup_{i\in Q} E_i(\s).
\]
For $\xi\subseteq \mathsf V$, we denote  $\tilde{Z}_{\xi}=Z_{( \mathsf V(\xi),\xi),q'}$.
On one hand, by the definition of the spin model~\eqref{eq:sspin}, conditioned on $\s$, the $\s'$ spins do not interact whenever the corresponding $\s$ spins are different, and hence we have
\begin{align}
\tilde \mu(\s) & \propto (q')^{|\mathsf V|- | V(E(\s))|}  e^{\alpha |E(\s)|}\prod_{i\in Q} e^{\beta |E_i(\s)|}\tilde Z_{E_i(\s)} (e^{\beta}-1) \nonumber \\
&=  (q')^{|\mathsf V|- | V(E(\s))|} e^{(\alpha+\beta) |E(\s)|}\prod_{i\in Q}  \tilde Z_{E_i(\s)} \big (\tfrac{q'a}{1-a} \big) \nonumber \\
&=  (q')^{|\mathsf V|- | V(E(\s))|}  \big(\tfrac{1-(1-q')a}{b}\big)^{ |E(\s)|}\prod_{i\in Q}  \tilde Z_{E_i(\s)} \big (\tfrac{q'a}{1-a} \big). \label{eq:prob1}
\end{align}
On the other hand, by property \eqref{eq:step1} from Proposition~\ref{prop:coupling}, and Proposition~\ref{prop:negform}, we have
\begin{align}
\mathbf P (\sigma)& \propto \sum_{\omega \textnormal{ consistent with } \sigma}\big(\tfrac{1-(1-q')a-b}b\big)^{|\omega|}Z_{(\mathsf V,\omega)}(\tfrac{q'a}{1-a-b}) \nonumber \\
&= \sum_{\omega \textnormal{ consistent with } \sigma} \big(\tfrac{1-(1-q')a-b}b\big)^{|\omega|}   (q')^{|\mathsf V|- | V(\omega)|}  \prod_{i\in Q} \tilde Z_{E_i(\sigma) \cap \omega} (\tfrac{q'a}{1-a-b}) \nonumber \\
&= (q')^{|\mathsf V|- | V(E(\sigma))|}   \prod_{i\in Q} \sum_{\xi \subseteq E_i(\sigma)}  \big(\tfrac{1-(1-q')a-b}b\big)^{|\xi|}  (q')^{| V(E_i(\sigma))|- | V(\xi)|} \tilde Z_{\xi} (\tfrac{q'a}{1-a-b}) \nonumber \\
&=(q')^{|\mathsf V|- | V(E(\sigma))|}  \big(\tfrac{1-(1-q')a}{b}\big)^{ |E(\s)|} \prod_{i\in Q}  \tilde Z_{E_i(\s)} \big (\tfrac{q'a}{1-a} \big), \label{eq:prob2}
\end{align}
where `$\omega$ {consistent with} $\sigma$' means that $\sigma$ is constant on the clusters of $\omega$, and where in the last equality we used Lemma~\ref{lem:resum} with $t=(1-a-b)/b$ and
$x={q'a}/({1-a-b})$. The formulas \eqref{eq:prob1} and \eqref{eq:prob2} are identical and we finish the proof.
\end{proof}

 \section{Positive association} \label{sec:posass}
We discuss here the positive association of the percolation and spin models. To this end, we first briefly recall the basic notions of this theory.
For a set~$S$, we identify $\mathcal P(S):=\{0,1\}^S$ with the set of subsets of $S$, and for $\xi_1,\xi_2 \in \mathcal P(S)$, we write $\xi_1\leq \xi_2$ if $\xi_1\subseteq \xi_2$.
We consider $\mathcal P(S)$ as a probability space with a probability measure $\nu$.
A random variable $X:\mathcal P(S)\to \mathbb R$ is called \emph{increasing} (resp.\ \emph{decreasing}), if $X(\xi_1)\leq X(\xi_2)$ (resp.\ $X(\xi_1)\geq X(\xi_2)$) whenever {$\xi_1\leq \xi_2$}.
Similarly, an event $A \subseteq \mathcal P(S)$ is called increasing (resp.\ decreasing) if its indicator function is increasing (resp.\ decreasing).

We call $\nu$ \emph{strictly positive} if $\nu(\xi)>0$ for all $\xi\in \mathcal P(S)$.
We say that $\nu$ satisfies the \emph{FKG inequality}, or is \emph{positively associated}, if
\[
\nu(XY) \geq \nu(X)\nu(Y)
\]
for all increasing random variables $X,Y$.
Moreover, we say that $\nu$ satisfies the \emph{FKG lattice condition} if for all $\xi_1,\xi_2 \in \mathcal P (S)$,
\begin{align} \label{eq:FKGlattice}
\nu(\xi_1\wedge \xi_2)\nu(\xi_1\vee \xi_2) \geq \nu(\xi_1) \nu(\xi_2),
\end{align}
where $\xi_1\wedge \xi_2$, resp.\ $\xi_1\vee \xi_2$, denote the pointwise minimum, resp.\ maximum, of $\xi_1$ and $\xi_2$ (or the intersection, resp.\ union, in the set interpretation of $\xi_1$ and~$\xi_2$).
If $\nu$ is strictly positive, then the FKG lattice condition implies positive association of $\nu$~\cite{FKG, Holley}, and moreover
it implies a stronger property, called \emph{strong positive association}, which e.g.\ yields stochastic monotonicity of the measure with respect to the imposed boundary conditions, see~\cite{RC}.

For $\xi \in \mathcal P(S)$ and $x\in S$, let $\xi^x,\xi_x\in \mathcal P(S)$ be the two configurations equal to $\xi$ on $S\setminus \{ x\}$, and satisfying $\xi^x(x)=1$ and $\xi_x(x)=0$.
It is known that for strictly positive measures, the FKG lattice condition holds if and only if
\begin{align} \label{eq:Grimm}
\nu (\xi^{x,y}) \nu(\xi_{x,y}) \geq \nu (\xi^{x}_{y})\nu(\xi^{y}_x)
\end{align}
for all $\xi\in \mathcal P (S)$ and $x,y \in S$ (see ~\cite{RC}).

\subsection{Positive association of the percolation model for $a+b\leq 1$} \label{sec:omegaFKG}
In this section we will use the above vocabulary with $S=\mathsf E$, and hence $\mathcal P(S)=\Omega$.
\begin{remark}
The following main result of this section, in the special case $q=q'=2$ and $a=b\leq \tfrac 12$, was independently proved in~\cite{SpiRay}. 
The partial order considered in~\cite{SpiRay} is actually different, and the monotonicity obtained there is stronger than ours in the sense that any increasing function according to our definition 
is also increasing according to the order of~\cite{SpiRay}, but not the other way around. We refer the interested reader to~\cite{SpiRay} for details.
\end{remark}

\begin{proposition} \label{prop:fkgomega}
The marginal distribution of $\mathbf P$ on $\omega$ satisfies the FKG lattice condition for $a+b\leq 1$.
\end{proposition}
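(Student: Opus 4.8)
The plan is to verify the FKG lattice condition in the form \eqref{eq:Grimm}, i.e.\ to show that for the marginal law $\mathbf P(\omega)$ and any $\omega \subseteq \mathsf E$ and $e_1,e_2 \in \mathsf E$,
\[
\mathbf P(\omega^{e_1,e_2})\,\mathbf P(\omega_{e_1,e_2}) \geq \mathbf P(\omega^{e_1}_{e_2})\,\mathbf P(\omega^{e_2}_{e_1}).
\]
I will use the explicit formula from Corollary~\ref{cor:marginal}:
\[
\mathbf P(\omega) \propto q^{k(\omega)} \big(\tfrac{1-b}{b}\big)^{|\omega|} Z_{(\mathsf V(\omega),\omega)^*,q'}\big(\tfrac{1-a-b}{a}\big).
\]
First I would observe that strict positivity holds on $\Omega$ for $a+b \leq 1$ (for $a+b<1$ the weight $(1-a-b)^{\cdots}$ is strictly positive; the boundary case $a+b=1$ can be treated by continuity or directly, since then the $Z$ factor is $1^{\cdots}=1$). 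The three factors $q^{k(\omega)}$, $\big(\tfrac{1-b}{b}\big)^{|\omega|}$ and the partition-function factor must each be analysed for their behaviour under the two moves $e_1,e_2 \mapsto$ both in / both out versus one in, one out.

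The term $\big(\tfrac{1-b}{b}\big)^{|\omega|}$ is log-modular in $\omega$, so it contributes equality and can be ignored. The classical fact (used already for the random cluster model, see \cite{RC}) is that $\omega \mapsto q^{k(\omega)}$ satisfies the FKG lattice condition because $k(\omega^{e_1,e_2}) + k(\omega_{e_1,e_2}) \geq k(\omega^{e_1}_{e_2}) + k(\omega^{e_2}_{e_1})$ as an inequality of integers (adding an edge decreases $k$ by $0$ or $1$, and the decrement is a supermodular-type statement about connectivity). The genuinely new ingredient is the partition-function factor $\omega \mapsto Z_{(\mathsf V(\omega),\omega)^*,q'}\big(\tfrac{1-a-b}{a}\big)$: since $a+b\leq 1$ this is a \emph{ferromagnetic} $q'$-state Potts partition function on the dual of $(\mathsf V(\omega),\omega)$, as noted right after Corollary~\ref{cor:marginal}. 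I would rewrite it via the high-temperature/random-cluster expansion \eqref{eq:highT},
\[
Z_{(\mathsf V(\omega),\omega)^*,q'}\Big(\tfrac{1-a-b}{a}\Big) \propto \sum_{\xi \subseteq \omega^{\dagger}} \Big(\tfrac{1-a-b}{a}\Big)^{|\xi|} q'^{\,k^*(\xi)},
\]
where $\xi$ ranges over dual edge-subsets supported on $\omega^{\dagger}=\mathsf E^*\setminus\omega^*$ and $k^*$ counts clusters in $\mathsf G^*$; combined with Euler's formula this is exactly the coupled $(\omega,\sigma')$ (equivalently $(\omega,\omega')$) picture of Corollary~\ref{cor:marginal}. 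So the cleanest route is probably to prove the FKG lattice condition not for the $\omega$-marginal directly but for the \emph{joint} law $\mathbf P(\omega,\eta)$ with $\eta=\eta(\sigma')\subseteq\omega$ on the lattice $\{(\omega,\eta): \eta\subseteq\omega\}$ — this joint weight from \eqref{eq:od1} is $q^{k(\omega)} a^{|\eta|} (1-b)^{|\omega\setminus\eta|} b^{|\mathsf E\setminus\omega|} \mathbf 1_{\eta\subseteq\omega}$ — and then use that marginals of a measure satisfying the lattice condition again satisfy it (or, since we only need positive association of the $\omega$-marginal, that marginals preserve the FKG inequality).

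For the joint weight $W(\omega,\eta)=q^{k(\omega)} a^{|\eta|}(1-b)^{|\omega\setminus\eta|} b^{|\mathsf E\setminus\omega|}\mathbf 1_{\eta\subseteq\omega}$ I would check \eqref{eq:FKGlattice}/\eqref{eq:Grimm} on the product lattice $\mathcal P(\mathsf E)\times\mathcal P(\mathsf E)$ (coordinates $\omega$ and $\eta$), move by move: the indicator $\mathbf 1_{\eta\subseteq\omega}$ is itself a lattice (its indicator satisfies the lattice condition, being the indicator of a sublattice); $a^{|\eta|}$, $(1-b)^{|\omega\setminus\eta|}$, $b^{|\mathsf E\setminus\omega|}$ are all log-modular hence give equality; and $q^{k(\omega)}$ depends only on $\omega$ and satisfies the lattice condition in $\omega$ by the classical random-cluster computation, which lifts trivially to the product lattice. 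The condition $a+b\le 1$ enters precisely to make all these weights nonnegative (so that the measure is genuinely supported where we claim and strictly positive after removing the hard constraint $\eta\subseteq\omega$, to which Holley's criterion still applies on the sublattice).

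I expect the main obstacle to be bookkeeping rather than conceptual: one must be careful that the relevant lattice for the joint law is the sublattice $\{\eta\subseteq\omega\}$ of $\mathcal P(\mathsf E)^2$, which \emph{is} closed under $\wedge,\vee$, so Holley's theorem and the equivalence with \eqref{eq:Grimm} apply there; and one must correctly handle the boundary case $a+b=1$ where $1-a-b=0$ kills higher-order terms (there the dual Potts partition function degenerates to $1$ and the argument is even simpler, matching Proposition~\ref{prop:RC}). The supermodularity of $\omega\mapsto k(\omega)$ — namely $k(\omega^{e_1,e_2}) - k(\omega^{e_1}_{e_2}) - k(\omega^{e_2}_{e_1}) + k(\omega_{e_1,e_2}) \geq 0$ — is the one genuinely combinatorial input, and it is standard (it is the same statement that underlies FKG for the random cluster model for $q\ge 1$), so I would cite \cite{RC} for it rather than reprove it.
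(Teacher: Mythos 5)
The key step in your plan is incorrect, and the error is conceptual rather than bookkeeping.

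You propose to prove the lattice condition for the ``joint law $\mathbf P(\omega,\eta)$ with $\eta=\eta(\sigma')$'' and write its weight as $q^{k(\omega)}a^{|\eta|}(1-b)^{|\omega\setminus\eta|}b^{|\mathsf E\setminus\omega|}\mathbf 1_{\eta\subseteq\omega}$, treating $\eta$ as a free lattice coordinate over subsets of $\mathsf E$. But \eqref{eq:od1} is a weight on pairs $(\omega,\sigma')$ with $\sigma'\in Q'^{\mathsf U}$, not on pairs $(\omega,\eta)$. The map $\sigma'\mapsto\eta(\sigma')$ is neither surjective onto $\{\eta\subseteq\omega\}$ nor has constant fibres, so pushing forward does \emph{not} give this product weight on $\{(\omega,\eta):\eta\subseteq\omega\}$. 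If you nevertheless define a measure by your formula and sum out $\eta$ freely, the $\omega$-marginal becomes $q^{k(\omega)}b^{|\mathsf E\setminus\omega|}(a+1-b)^{|\omega|}$, a plain $\textnormal{FK}(q)$ random cluster weight with no $q'$-dependence whatsoever, which is visibly not $\mathbf P(\omega)$ from \eqref{eq:od2}. The $q'$-Potts partition function factor $Z_{(\mathsf V(\omega),\omega)^*,q'}$ appearing in \eqref{eq:od2} is precisely the record of the non-constant multiplicity $|\{\sigma':\eta(\sigma')=\eta\}|$, and your plan never engages with it. In addition, even setting that aside, the parenthetical fallback ``marginals of a measure satisfying the lattice condition again satisfy it'' is false in general: marginals preserve positive association but not the lattice condition, and the proposition asserts the lattice condition.

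The paper's proof attacks the $\omega$-marginal \eqref{eq:od2} directly via the criterion \eqref{eq:Grimm}. After discarding the log-modular factor and using supermodularity of $k$, the genuinely new step is to prove
\[
Z_{G^*(\omega_{e,f})}\,Z_{G^*(\omega^{e,f})}\ \geq\ Z_{G^*(\omega^{e}_f)}\,Z_{G^*(\omega^{f}_e)},
\]
where $G^*(\omega)=(\mathsf V(\omega),\omega)^*$. This is reduced (after normalizing by $Z_{G^*(\omega^{e,f})}^2$) to the correlation inequality $\mu(\s(u_1)=\s(u_2),\,\s(u_3)=\s(u_4))\geq\mu(\s(u_1)=\s(u_2))\,\mu(\s(u_3)=\s(u_4))$ for a \emph{ferromagnetic} $q'$-state Potts model $\mu$ on $G^*(\omega^{e,f})$ (ferromagnetic precisely because $a+b\leq 1$), which is then proved in Lemma~\ref{lem:clumping} via the Edwards--Sokal coupling and positive association of the associated random cluster model. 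This reduction to a Potts correlation inequality for equality events is the idea your proposal is missing; it is what lets one handle the $Z_{(\mathsf V(\omega),\omega)^*,q'}$ factor, rather than trying to linearize it away as a sum over a second lattice coordinate.
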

\begin{proof}
By~\eqref{eq:Grimm}, it is enough to show that
\begin{align*} 
\mathbf P (\omega^{e,f})\mathbf P(\omega_{e,f}) \geq \mathbf P(\omega^{e}_{f})\mathbf P(\omega^{f}_e)
\end{align*}
for all $\omega\in \Omega$ and $e,f\in \mathsf E$. We will use formula~\eqref{eq:od2} from Corollary~\ref{cor:marginal}. Since
\[
k(\omega^{e,f})+k(\omega_{e,f}) \geq  k(\omega^{e}_{f}) + k(\omega^{f}_e),
\]
it is enough to prove that 
\begin{align}\label{eq:zzzz}
{Z_{G^*(\omega_{e,f})}}{Z_{G^*(\omega^{e,f})}} \geq {Z_{G^*(\omega^{e}_f)}}{Z_{G^*(\omega^{f}_e)}},
\end{align}
where $G^*(\omega) := (\mathsf V(\omega), \omega)^*$.
Recall that a bridge of $\omega$ is an edge in $\omega$ which does not belong to any cycle of $\omega$.
Note that if either $e$ or $f$ is a bridge of $\omega^{e,f}$, then \eqref{eq:zzzz} actually becomes an equality. Hence, we can assume that neither $e$ nor $f$ is a bridge of $\omega^{e,f}$.
In this case, let $\mu$ be the law of the ferromagnetic $q'$-state Potts model on $ G^*({\omega^{e,f}})$ with parameter $x=(1-a-b)/a$, and let $u_1$, $u_2$ (resp.\ $u_3$, $u_4$) be the faces of $ G({\omega^{e,f}})$ incident on $e$ (resp.\ $f$).
Then, after dividing both sides by $(Z_{G^*(\omega^{e,f})})^2$, \eqref{eq:zzzz} is equivalent to 
\[
\mu(\s(u_1)=\s(u_2), \s(u_3)=\s(u_4)) \geq \mu(\s(u_1)=\s(u_2)) \mu(\s(u_3)=\s(u_4)),
\]
which follows from the first inequality in the next lemma.
\end{proof}

\begin{lemma} \label{lem:clumping}
Let $\mu$ be the law of a ferromagnetic $q$-state Potts model on a finite graph $G=(V,E)$, and let $A,B\subseteq V$. Then
\[
\mu(\s \textnormal{ constant on } A \textnormal{ and on } B) \geq \mu(\s \textnormal{ constant on } A) \mu(\s \textnormal{ constant on }  B),
\]
and 
\[
\mu(\s \textnormal{ constant on } A\cup B) \geq \tfrac 1q \mu(\s \textnormal{ constant on } A\textnormal{ and on } B).
\]
\end{lemma}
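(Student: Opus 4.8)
The plan is to prove both inequalities using the random cluster (Fortuin--Kasteleyn) representation of the ferromagnetic Potts model together with the Edwards--Sokal coupling, since in that picture the events ``$\s$ constant on $A$'' become geometric connectivity-type events to which the FKG inequality for the random cluster measure applies.

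\textbf{Setup via Edwards--Sokal.} First I would recall that the ferromagnetic $q$-state Potts model $\mu$ on $G$ is coupled to the $\textnormal{FK}(q)$ random cluster measure $\phi$ on $G$: sample $\zeta\subseteq E$ according to $\phi$, then assign an independent uniform spin from $Q$ to each cluster of $\zeta$. Under this coupling, for a fixed vertex set $A$, the conditional probability that $\s$ is constant on $A$ given $\zeta$ depends only on the number of distinct clusters of $\zeta$ meeting $A$: if $A$ touches $m(A)$ clusters then $\mathbf P(\s \text{ const on } A \mid \zeta) = q^{1-m(A)}$ (with the convention that $A=\emptyset$ or a single vertex gives $m=1$). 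The key structural observation is that $\xi \mapsto q^{1-m_A(\xi)}$ is an \emph{increasing} function of $\xi\in\mathcal P(E)$, since adding an edge can only merge clusters and hence only decrease $m_A$; likewise $\xi\mapsto q^{1-m_{A\cup B}(\xi)}$ is increasing.

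\textbf{First inequality.} For the first inequality, write $f_A(\zeta) = q^{1-m_A(\zeta)}$ and $f_B(\zeta)=q^{1-m_B(\zeta)}$, both increasing. Then $\mu(\s \text{ const on } A) = \phi(f_A)$ and similarly for $B$. Moreover $\mu(\s\text{ const on }A\text{ and on }B) = \phi(f_A f_B)$, because conditionally on $\zeta$ the events ``const on $A$'' and ``const on $B$'' are \emph{not} independent in general, but the conditional probability of their intersection is still a function of $\zeta$ alone; one checks directly that this conditional probability equals $f_A(\zeta)f_B(\zeta)$ precisely when no cluster of $\zeta$ meets both $A$ and $B$, and is \emph{larger} than $f_A f_B$ otherwise (a shared cluster only helps). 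Hence $\mu(\s\text{ const on }A,B)\ge\phi(f_Af_B)\ge\phi(f_A)\phi(f_B)$ by the FKG inequality for the random cluster model (valid for $q\ge1$), which is what we want.

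\textbf{Second inequality.} For the second inequality I would argue deterministically at the level of the coupling. Condition on $\zeta$. If $\s$ is constant on $A$ and on $B$, then $\s$ is constant on $A\cup B$ as soon as the (constant) value on $A$ equals the value on $B$; if $A$ and $B$ share a cluster of $\zeta$ this is automatic, and otherwise, conditionally, the two values are independent uniform spins, so they agree with probability $1/q$. Thus $\mathbf P(\s\text{ const on }A\cup B\mid\zeta)\ge \tfrac1q\,\mathbf P(\s\text{ const on }A\text{ and on }B\mid\zeta)$ pointwise in $\zeta$, and integrating against $\phi$ gives the claim.

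\textbf{Main obstacle.} The routine part is the counting of conditional probabilities in the Edwards--Sokal coupling; the one genuine point requiring care is the identification $\mu(\s\text{ const on }A\text{ and on }B)=\phi(f_Af_B)$ versus an inequality --- one must be careful that when a cluster of $\zeta$ straddles $A$ and $B$ the conditional probability exceeds $f_A(\zeta)f_B(\zeta)$, so that at worst we get the inequality $\mu(\s\text{ const on }A,B)\ge\phi(f_Af_B)$, which is all that is needed. An alternative, perhaps cleaner, route for the first inequality is to invoke the second Griffiths-type / FKG inequality for the Potts model directly via the fact that ``$\s$ constant on $A$'' is, after the standard Ashkin--Teller-style embedding, increasing in the appropriate random cluster variables; but the Edwards--Sokal argument above is self-contained and elementary, so that is the one I would write out.
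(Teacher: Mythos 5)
Your argument is correct and essentially the same as the paper's: pass to the Edwards--Sokal coupling, express the conditional probability of constancy in terms of cluster counts (the paper writes this as $\mathbb E[q^{1-k(A\cup B)}\mathbf 1\{A\con{}B\}+q^{2-k(A)-k(B)}\mathbf 1\{A\ncon{}B\}]$), bound it below using $k(A\cup B)\le k(A)+k(B)-1$ on $\{A\con{}B\}$, and apply FKG for the random cluster measure to the increasing functions $q^{-k(A)},q^{-k(B)}$; the second inequality is the pointwise observation $k(A\cup B)\le k(A)+k(B)$. One small presentational wobble: you first assert $\mu(\s\text{ const on }A\text{ and }B)=\phi(f_Af_B)$ before correcting this to $\ge$, but since you flag and repair this in the same paragraph the argument stands.
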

\begin{proof}
Consider the FK-random cluster model associated to the Potts model in the Edwards--Sokal coupling (see e.g.\ \cite{RC} for details). 
Let $k(A)$ be the number of clusters of the configuration intersecting $A$, and let $ \{ A \con{} B\}$ be the event that $A$ and $B$ belong to the same cluster.
From the Edwards--Sokal coupling it follows that
\begin{align} \label{eq:ES1}
\mu(\s \textnormal{ constant on } A ) &= \mathbb E[q^{1-k(A)}], 
\end{align}
and 
\begin{align}\label{eq:ES2}
\mu(\s \textnormal{ constant on }  A  \text{ and on } B) =  \\ \nonumber
\mathbb E\big[ q^{1-k(A \cup B) } &  \mathbf 1\{ A \con{} B\} +q^{2-k(A)-k( B) } \mathbf 1\{ A \ncon{} B\}\big], 
\end{align}
where $\mathbb E$ is the expectation in the random cluster model. 
Using the fact that $k(A\cup B) \mathbf 1\{ A \con{} B\} \leq k(A)+k(B)-1$, we therefore get that
\begin{align}
\mu(\s \textnormal{ constant on } A \text{ and on } B) \geq \mathbb E\big [q^{2-k(A)-k( B) } \big],
\end{align}
and the first desired inequality follows from \eqref{eq:ES1} and the positive association of the FK-random cluster model (see e.g.\ \cite{RC}) applied to the increasing function $q^{-k(A)}$.
On the other hand, the second desired inequality follows from \eqref{eq:ES1}, \eqref{eq:ES2} and the fact that $k(A\cup B) \leq k(A)+k(B)$.
\end{proof}

\begin{remark}
One can check that the FKG lattice condition for $\omega$ does not hold when $a+b>1$. The reason is that the associated Potts model is in this case antiferromagnetic 
and the inequalities above are not valid anymore.
\end{remark}
\subsection{Positive association of the spin model $\sigma$ for $q=2$} \label{sec:spinFKG}
We now fix $q=2$, identify $Q$ with $\{0,1\}$, and set $S=\mathsf V$.

The following is the main result of this section. In the case $q'=2$ it was first proved by Glazman and Peled in~\cite{GlaPel}. 

\begin{proposition} 
Assume that $M$ is of genus zero and $q=2$. Then, the marginal distribution of $\tilde \mu$ on $\s$, and hence, by Theorem~\ref{thm:unconstrained}, also the marginal of $\mathbf P$ on~$\sigma$, satisfies the FKG lattice condition whenever $b\leq1$.
\end{proposition}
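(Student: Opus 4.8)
The plan is to verify the FKG lattice condition directly in the form \eqref{eq:Grimm}, using the explicit Gibbs--Boltzmann weight \eqref{eq:sspin}. Since the measure $\tilde \mu$ is strictly positive on $\{0,1\}^{\mathsf V}$ when $b\leq 1$ (all summands in the partition function are positive, $\beta>0$ always and the edge weights $e^{\alpha}, e^{\alpha+\beta}$ are positive), it suffices to show
\[
\tilde \mu(\s^{v,w}) \tilde\mu(\s_{v,w}) \geq \tilde\mu(\s^v_w)\tilde\mu(\s^w_v)
\]
for all $\s$ and all vertices $v,w\in \mathsf V$. The key point is that $\tilde\mu(\s)$ is a \emph{sum} over the $\s'$ configurations, so the left- and right-hand sides are bilinear expressions in the conditional sub-partition-functions, and one must avoid the naive approach of fixing $\s'$ (which fails, as the order on $\s'$ is not relevant here). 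Instead I would integrate out $\s'$ first.

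First I would record the conditional structure: for fixed $\s$, summing \eqref{eq:sspin} over $\s'\in Q'^{\mathsf V}$ gives, as already computed in \eqref{eq:prob1}, that
\[
\tilde\mu(\s)\propto (q')^{|\mathsf V|-|V(E(\s))|}\Big(\tfrac{1-(1-q')a}{b}\Big)^{|E(\s)|}\prod_{i\in Q}\tilde Z_{E_i(\s)}\big(\tfrac{q'a}{1-a}\big),
\]
and since $q=2$ there are just two colour classes, so $E(\s)=E_0(\s)\cup E_1(\s)$ is the disjoint union of the monochromatic edge sets and the product has only two factors. Then I would fix $\s$ off $\{v,w\}$ and compare the four configurations $\s^{v,w},\s_{v,w},\s^v_w,\s^w_v$. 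Writing $N$ for the neighbours of $v$ and $w$, flipping the spin at $v$ (or $w$) only changes which edges incident to $v$ (resp. $w$) lie in $E_0$ versus $E_1$; the combinatorial factor $(q')^{-|V(E(\s))|}$ and the power of $\big(\tfrac{1-(1-q')a}{b}\big)$ depend only on which incident edges are monochromatic, and — crucially — the ``monochromatic'' status of each edge $\{v,x\}$ with $x\notin\{v,w\}$ is the \emph{same} in $\s^{v,w}$ and $\s^v_w$ (namely $\s(x)=1$), and the same in $\s_{v,w}$ and $\s^w_v$ (namely $\s(x)=0$); only the edge $\{v,w\}$ itself (if present) behaves differently. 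So after cancelling common factors the inequality reduces to a statement purely about the random-cluster partition functions $\tilde Z_{E_0},\tilde Z_{E_1}$ associated to a \emph{ferromagnetic} $q'$-state Potts model (the coupling $\tfrac{q'a}{1-a}>0$ since $a\leq 1$), of the shape: the product of $Z$'s for the ``merged'' configurations dominates the product for the ``split'' ones.

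The concrete reduction should be the following. Let $G_i$ be the graph on the neighbourhood structure with edge set $E_i(\s_{v,w})$, and note $E_i$ gains or loses the edges incident to $v$ or $w$ depending on the four cases. Then the desired inequality becomes exactly an instance of Lemma~\ref{lem:clumping} (both inequalities of it, the correlation inequality and the entropy-type $\tfrac1{q'}$ inequality handling the edge $\{v,w\}$) applied inside one of the colour classes of the associated FK-random-cluster model: moving the neighbours of $v$ and $w$ all into one colour is a ``$\s$ constant on $A\cup B$'' event, splitting them between the two colours is the ``constant on $A$ and on $B$'' event, and the $\tfrac1q$ factor in Lemma~\ref{lem:clumping} is precisely what compensates the $(q')^{\pm 1}$ that appears when the edge $\{v,w\}$ itself changes monochromatic status. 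So the proof assembles Theorem~\ref{thm:unconstrained} (to transfer the conclusion to $\sigma$ under $\mathbf P$), the $\s'$-integrated formula \eqref{eq:prob1}, and Lemma~\ref{lem:clumping}.

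\textbf{Main obstacle.} The delicate part is the bookkeeping in the case analysis at $\{v,w\}$: keeping track of how the exponent $|E(\s)|$, the vertex-count exponent $|V(E(\s))|$, and the edge $\{v,w\}$'s membership in a colour class vary across the four configurations, and checking that all the elementary algebraic prefactors (the powers of $\tfrac{1-(1-q')a}{b}$ and of $q'$) cancel or combine so that what remains is \emph{exactly} covered by the two inequalities of Lemma~\ref{lem:clumping} — in particular that the single $\tfrac1{q'}$ loss in the second inequality of that lemma is enough, which is why the hypothesis $q=2$ (only two colour classes, so only one ``merge'' to control) is used. I also need the ferromagneticity of the auxiliary Potts model, i.e. $\tfrac{q'a}{1-a}\geq 0$, which holds throughout $a\in(0,1]$ with the convention that $a=1$ is a degenerate limit; the genus-zero assumption enters only through Theorem~\ref{thm:unconstrained}, and the hypothesis $b\le 1$ is used only for strict positivity.
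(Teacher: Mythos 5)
Your overall plan coincides with the paper's: verify the FKG lattice condition through the two-point criterion~\eqref{eq:Grimm}; observe that conditionally on $\s$ the dual spins $\s'$ form two independent ferromagnetic $q'$-state Potts models on the two colour classes (with parameter $e^\beta-1=\tfrac{q'a}{1-a}$); and reduce the required inequality to Lemma~\ref{lem:clumping}. Two of your bookkeeping claims, however, are incorrect, and one of them hides the place where the hypothesis actually matters. First, the prefactor $(q')^{-|V(E(\s))|}$ in~\eqref{eq:prob1} does \emph{not} cancel across the four configurations: take a vertex $x$ adjacent to both $v$ and $w$ (and to nothing else) with $\s(x)=1$; then $x\in V(E(\s))$ for $\s^{v,w}$, $\s^v_w$, $\s^w_v$ but not for $\s_{v,w}$, so it contributes $(q')^{-1}$ to the left product and $(q')^{-2}$ to the right. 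This is fixable by absorbing the prefactor, i.e.\ working with $Z_{(\mathsf V,E_i(\s)),q'}=(q')^{|\mathsf V|-|V(E_i(\s))|}\tilde Z_{E_i(\s)}$ on the fixed vertex set $\mathsf V$. The paper avoids this kind of bookkeeping altogether by never integrating $\s'$ all the way out: it fixes the Potts measure $\mu_i$ on the vertex set $V_i\setminus\{v,w\}$ (independent of the four configurations), packages the local contribution of $v$ into $X_i(v)=\sum_{\s'(v)}\prod_{v'\in V_i(v)}e^{\alpha+\beta\delta_{\s'(v),\s'(v')}}$, expands $X_i(v)=\sum_{A\subseteq V_i(v)}c_A\mathbf 1\{\s'\textnormal{ const.\ on }A\}$ with nonnegative coefficients, and then the inequality becomes $\mu_i(X_i(v)X_i(w))\geq\mu_i(X_i(v))\mu_i(X_i(w))$, which is Lemma~\ref{lem:clumping} term by term.

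Second, and more seriously, your assertion that $b\le 1$ ``is used only for strict positivity'' is wrong. The Gibbs weight~\eqref{eq:sspin} is a product of strictly positive edge factors $e^\alpha$, $e^{\alpha+\beta}$ for all $b>0$, so $\tilde\mu$ is strictly positive regardless; if $b\le 1$ were only needed for positivity, the proposition would hold for all $b$, which it should not. The hypothesis $b\leq 1$ is precisely what closes the ``$\tfrac1{q'}$ loss'' you mention at the edge $\{v,w\}$. When $\{v,w\}\in\mathsf E$ the left-hand side carries the extra factor $e^{\alpha+\beta\delta_{\s'(v),\s'(w)}}$; after expanding and applying the second inequality of Lemma~\ref{lem:clumping} one is left needing
\[
e^{\alpha}+\tfrac1{q'}\bigl(e^{\alpha+\beta}-e^\alpha\bigr)\;=\;\tfrac1{q'}\bigl(e^{\alpha+\beta}+(q'-1)e^\alpha\bigr)\;\geq\;1,
\]
and substituting~\eqref{eq:parameters} shows the bracket equals $q'/b$, so the inequality holds precisely when $b\leq 1$. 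You speculated that $q=2$ is what makes this balance work; in fact $q=2$ is needed only to have the linear order on $\{0,1\}^{\mathsf V}$ that makes the FKG lattice condition meaningful, whereas the numerical balance at the edge $\{v,w\}$ is exactly the place where $b\leq1$ does the work.
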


\begin{proof}
Fix $\s \in Q^{\mathsf V}$ and $v,w\in \mathsf V$.
By~\eqref{eq:Grimm} it is enough to show that 
\begin{align}  \label{eq:Grimm1}
 \tilde \mu (\s^{v,w}) \tilde \mu (\s_{v,w}) \geq  \tilde \mu(\s^{v}_{w}) \tilde \mu (\s^{w}_v)
\end{align}
By the definition of the spin model $(\s,\s')$, conditioned on~$\s$, the spins~$\s'$ are distributed like two
independent Potts models, both with parameter $J=\beta$, defined on the subgraphs of $\mathsf G$ induced by the two sets of vertices of constant spin $\s$.
Let
\[
V_{i}:=\{ v\in \mathsf V: \s(v)=i\} \setminus \{v,w\}, \qquad i\in \{0,1\},
\]
and let $\mu_{i}$ be the law of the Potts model with $J=\beta$ defined on the subgraph of $\mathsf G$ induced by $V_{i}$.
Define $V_i(v)= \{ v'\in V_{i}:v' \sim v   \}$
and let
\begin{align} \label{eq:Xi}
X_i(v)=\sum_{\s'(v)\in Q'}\prod_{v' \in V_i(v)}e^{\alpha + \beta \delta_{\s'(v),\s'(v')}},
\end{align}
where $\alpha$ and $\beta$ are as in~\eqref{eq:sspin}.

We consider two cases.

{\bf Case I:} $\{v, w\}\notin \mathsf E$. 
One can check that in this case condition \eqref{eq:Grimm1} is equivalent to
\[
\prod_{i\in \{0,1\}}\mu_{i}(X_i(v)X_i(w)) \geq \prod_{i\in \{0,1\}}\ \mu_{i}(X_i(v)) \mu_{i}(X_i(w)),
\]
and it is enough to prove the corresponding inequality for each $i\in\{0,1\}$ separately.
To this end, we write 
\begin{align} \label{eq:linear}
e^{\alpha + \beta \delta_{\s'(v),\s'(v')}}= e^{\alpha} +\delta_{\s'(v),\s'(v')}( e^{ \alpha+\beta }- e^{\alpha}).
\end{align}
Plugging this into~\eqref{eq:Xi} and expanding the product, we have
\begin{align}
X_i(v) & =\sum_{\s'(v)\in Q'} \sum_{A \subseteq V_i(v) } e^{\alpha(|V_i(v)| -|A|) }( e^{ \alpha+\beta }- e^{\alpha})^{|A|}\mathbf 1\{\s'\equiv\s'(v) \textnormal{ on } A \} \nonumber\\
&= \sum_{A \subseteq V_i(v) } e^{\alpha(|V_i(v)| -|A|) }( e^{ \alpha+\beta }- e^{\alpha})^{|A|}\mathbf 1\{\s' \textnormal{ constant on } A \} . \label{eq:expX}
\end{align}
Since $\beta\geq 0$ we have that $e^{ \alpha+\beta }- e^{\alpha} \geq 0$, and hence all the coefficients of the sum are nonnegative.
The desired bound follows therefore from Lemma~\ref{lem:clumping} applied to each term of the sum separately.

{\bf Case II:} $\{v, w\} \in \mathsf E$. 
In this case condition \eqref{eq:Grimm1} can be rewritten as
\[
\prod_{i\in \{0,1\}}\mu_{i}(X_i(v)X_i(w)e^{\alpha + \beta \delta_{\s'(v),\s'(w)}}) \geq \prod_{i\in \{0,1\}}\ \mu_{i}(X_i(v)) \mu_{i}(X_i(w)).
\]
Again, it is enough to prove the corresponding inequality for each $i\in\{0,1\}$ separately. Using \eqref{eq:expX} and \eqref{eq:linear} we can again conclude the desired inequality from
the following bound valid for all $A\subseteq V_i(v)$ and $B\subseteq V_i(w)$:
\begin{align*}
&\mu_{i}\Big(\sum_{\s'(v),\s'(w)\in Q'} \mathbf 1 \{\s' \equiv \s'(v) \textnormal{ on } A \textnormal{ and }\s' \equiv \s'(w) \textnormal{ on } B \}e^{\alpha + \beta \delta_{\s'(v),\s'(w)}}\Big) \\
&= e^{\alpha} \mu_{i}(\s' \textnormal{ constant on } A \textnormal{ and on } B)+( e^{ \alpha+\beta }- e^{\alpha})\mu_{i}(\s' \textnormal{ constant on } A \cup B) \\
& \geq (e^{\alpha} +\tfrac 1{q'} ( e^{ \alpha+\beta }- e^{\alpha}))\mu_{i}(\s' \textnormal{ constant on } A \textnormal{ and on } B) \\
& =\tfrac 1{q'}(e^{ \alpha+\beta }+(q'-1)e^{\alpha})\mu_{i}(\s' \textnormal{ constant on } A \textnormal{ and on } B) \\
& \geq \mu_{i}(\s' \textnormal{ constant on } A)\mu_{i}(\s' \textnormal{ constant on } B),
\end{align*}
where we used Lemma~\ref{lem:clumping} twice, and where we used that $e^{ \alpha+\beta }+(q'-1)e^{\alpha}\geq q'$ for $b\leq 1$.
\end{proof}

\section{Interplay between the models} \label{sec:interplay}
In this section we discuss various aspects of the relationship between the three instances of the model.

\subsection{Spins and percolation}
We start by showing how $\sigma$-spin correlations are described by $\omega$-connectivity probabilities
in the same way the Potts model correlations are given by the random cluster connectivity probabilities.
Using this together with positive association of $\omega$ for $a+b\leq 1$ established in Sect.~\ref{sec:omegaFKG}, we deduce the second Griffiths inequality for the moments of $\sigma$. This is proved in the same manner as the analogous result for the Potts model was proved in~\cite{grimPotts}.

For a random variable $X$, we will sometimes use the physics notation and write $\langle X \rangle$ for the expectation of $X$.
Let $\sigma_0$ be a random variable uniformly distributed on $Q$, and let $m(k)= \langle \sigma_0^k \rangle$ for $k \in \{ 0,1,\ldots\}$.
Since we assume that $Q$ is a symmetric subset of $\mathbb R$, we have $m(2k+1)=0$.
For $\xi\subseteq \mathsf E$ and $v_1,\ldots,v_k \in \mathsf V$, we define $\pi(\xi)$ to be the partition of $\{ v_1,\ldots,v_k \}$ induced by the connected components of~$\xi$. 
\begin{proposition}[$\sigma$-spin correlations via $\omega$-connectivities]
Let $v_1,\ldots,v_k\in \mathsf V$ and $r_1,\ldots,r_k\in \{0,1,\ldots \}$. Then 
\[
\langle \sigma_{v_1}^{r_1} \cdots \sigma_{v_k}^{r_k} \rangle= \sum_{ \mathcal P } \mathbf P (\pi( {\omega})=  \mathcal P) \prod_{A\in \mathcal P} m\Big({\sum_{ v_j\in A} r_{j}}\Big),
\]
where the sum is taken over all partitions $ \mathcal P$ of $\{v_1,\ldots,v_k\}$ such that for all $A\in \mathcal P$, $\sum_{ v_j\in A} r_{j}$ is even.
In particular,
\[
\langle \sigma(v_1)\sigma(v_2) \rangle =  m(2)\mathbf P( v_1 \con{\omega}v_2),
\]
where $\{ v_1 \con{\omega}v_2 \}$ is the event that $v_1$ and $v_2$ are in the same cluster of $\omega$.
\end{proposition}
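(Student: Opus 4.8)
The plan is to condition on the percolation configuration $\omega$ and use property~\eqref{eq:step1} of Proposition~\ref{prop:coupling}, which says that conditioned on $\omega$ the spins $\sigma$ are an independent uniform assignment of a spin from $Q$ to each cluster of $\omega$. First I would write
\[
\langle \sigma_{v_1}^{r_1} \cdots \sigma_{v_k}^{r_k} \rangle = \mathbf E\Big[ \mathbf E\big[ \sigma_{v_1}^{r_1} \cdots \sigma_{v_k}^{r_k} \, \big| \, \omega \big] \Big],
\]
and evaluate the inner conditional expectation. Given $\omega$, group the vertices $v_1,\ldots,v_k$ according to which cluster of $\omega$ they lie in; this grouping is exactly the partition $\pi(\omega)$ of $\{v_1,\ldots,v_k\}$. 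Since distinct clusters receive independent spins, and within a single cluster $A$ all the $v_j\in A$ share the common spin value, say $\sigma_A$, the product factorizes as
\[
\mathbf E\big[ \sigma_{v_1}^{r_1} \cdots \sigma_{v_k}^{r_k} \, \big| \, \omega \big] = \prod_{A\in \pi(\omega)} \mathbf E\big[ \sigma_A^{\sum_{v_j\in A} r_j} \big] = \prod_{A\in \pi(\omega)} m\Big(\sum_{v_j\in A} r_j\Big),
\]
using that each $\sigma_A$ is uniform on $Q$ and that $m(k)=\langle \sigma_0^k\rangle$ by definition.

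Next I would take the outer expectation over $\omega$. Collecting configurations $\omega$ by the value of the induced partition $\pi(\omega)=\mathcal P$, this gives
\[
\langle \sigma_{v_1}^{r_1} \cdots \sigma_{v_k}^{r_k} \rangle = \sum_{\mathcal P} \mathbf P(\pi(\omega)=\mathcal P) \prod_{A\in \mathcal P} m\Big(\sum_{v_j\in A} r_j\Big),
\]
where the sum ranges over all partitions $\mathcal P$ of $\{v_1,\ldots,v_k\}$. Finally, I would observe that $Q$ being a symmetric subset of $\mathbb C$ forces $m(2\ell+1)=0$ for all $\ell$, so any partition containing a block $A$ with $\sum_{v_j\in A} r_j$ odd contributes a zero factor and may be dropped; this yields exactly the stated restricted sum over partitions whose every block has an even exponent sum.

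For the special case $k=2$, $r_1=r_2=1$: the only admissible partition is the one with the single block $\{v_1,v_2\}$ (the partition into two singletons would require $r_1$ and $r_2$ individually even, which they are not since $m(1)=0$), and $\{v_1,v_2\}$ forms one block precisely on the event $\{v_1 \con{\omega} v_2\}$. Hence $\langle \sigma(v_1)\sigma(v_2)\rangle = m(2)\,\mathbf P(v_1\con{\omega}v_2)$. There is no real obstacle here; the only point requiring a little care is the bookkeeping of the partition structure and the justification that the within-cluster spins are genuinely constant (which is built into the definition of $\Omega\Sigma$ and reiterated in Proposition~\ref{prop:coupling}\eqref{eq:step1}), together with the parity cancellation coming from symmetry of $Q$.
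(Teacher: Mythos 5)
Your proof is correct and follows exactly the approach the paper indicates (the paper's own proof is the one-liner ``condition on $\omega$ and use Proposition~\ref{prop:coupling}''); you have simply spelled out the conditional factorization over clusters and the parity cancellation in more detail.
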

\begin{proof}
It is enough to condition on $\omega$ and use Proposition~\ref{prop:coupling}.
\end{proof}

\begin{proposition}[Second Griffiths inequality] Assume that $a+b\leq 1$.
Let $v_1,\ldots,v_k\in\mathsf V$ and $r_1,\ldots,r_k,s_1,\ldots,s_k\in \{0,1,\ldots \}$. Then 
\[
\langle \sigma(v_1)^{r_1+s_1} \cdots \sigma(v_k)^{r_k+s_k} \rangle \geq \langle \sigma({v_1})^{r_1} \cdots \sigma({v_k})^{r_k} \rangle\langle \sigma({v_1})^{s_1} \cdots \sigma({v_k})^{s_k} \rangle.
\]
\end{proposition}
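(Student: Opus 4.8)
The plan is to mimic the proof of the second Griffiths inequality for the Potts model from \cite{grimPotts}, transferring everything to the percolation picture via the previous proposition. First I would take two independent copies $\omega^{(1)}$ and $\omega^{(2)}$ of the percolation configuration, governed by the product measure $\mathbf P \otimes \mathbf P$ on $\Omega \times \Omega$, and write $\pi_1 = \pi(\omega^{(1)})$, $\pi_2 = \pi(\omega^{(2)})$ for the induced partitions of $\{v_1,\ldots,v_k\}$. Applying the preceding proposition to each factor, the right-hand side becomes
\[
\sum_{\mathcal P_1, \mathcal P_2} \mathbf P(\pi_1 = \mathcal P_1)\,\mathbf P(\pi_2 = \mathcal P_2) \prod_{A \in \mathcal P_1} m\Big(\sum_{v_j \in A} r_j\Big) \prod_{B \in \mathcal P_2} m\Big(\sum_{v_j \in B} s_j\Big),
\]
summed over partitions all of whose blocks have even $r$-sum, resp.\ even $s$-sum. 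The left-hand side, again by the proposition, is a sum over partitions $\mathcal P$ with even $(r+s)$-sum on every block; the key combinatorial step is to re-express it using the common refinement $\mathcal P_1 \wedge \mathcal P_2$ of the two independent partitions, which is stochastically dominated in refinement order by $\pi_1$ (and by $\pi_2$) since $\omega^{(1)} \wedge \omega^{(2)} \subseteq \omega^{(1)}$ and fewer edges means coarser components --- wait, more carefully: if $\omega' \subseteq \omega$ then $\pi(\omega')$ refines $\pi(\omega)$, so $\pi(\omega^{(1)} \cap \omega^{(2)})$ refines both $\pi_1$ and $\pi_2$.

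Concretely, I would argue as follows. On the right-hand side, each product $\prod_{A} m(\sum_A r_j)$ can be rewritten by noting that $m(\ell) = \langle \sigma_0^\ell\rangle$ factorizes as an expectation over independent uniform spins $\sigma_0^{(j)} \in Q$ attached to the vertices $v_j$, constrained to agree within blocks of $\mathcal P_1$; similarly for the $s$-side with blocks of $\mathcal P_2$. Introducing two independent families of block-spins, one constant on blocks of $\pi_1$ and one constant on blocks of $\pi_2$, the double sum on the right equals $\mathbf E\big[\prod_j (\tau_j)^{r_j} (\tau'_j)^{s_j}\big]$ where $\tau_j$ (resp.\ $\tau'_j$) is the block-spin of $v_j$ under $\pi_1$ (resp.\ $\pi_2$), and this is nonzero only because of the symmetry $m(2\ell+1)=0$ forcing the relevant parity conditions. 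On the left-hand side, the analogous expression involves a single percolation configuration, but using positive association of $\omega$ established in Proposition~\ref{prop:fkgomega} (valid precisely because $a+b \le 1$) together with the fact that ``$v_i$ and $v_j$ connected'' is an increasing event, I would couple $\omega$ so that its partition is coarser than $\pi_1 \wedge \pi_2$, i.e., I would compare $\langle \prod_j \sigma(v_j)^{r_j+s_j}\rangle$ with $\mathbf E[\prod_j (\rho_j)^{r_j+s_j}]$ where $\rho_j$ is the block-spin of $v_j$ in a suitable common coarsening, and show the former dominates.

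The mechanism making this work is the Griffiths--Ginibre-type trick: one writes $\sigma(v_i)^{r_i+s_i} = \sigma(v_i)^{r_i}\sigma(v_i)^{s_i}$, duplicates the system, and exploits that $m$ applied to block-sums is a nonnegative quantity whose ``cross terms'' between the $r$-copy and the $s$-copy only add connections --- never remove them. Since connection events are increasing, FKG for $\omega$ gives $\mathbf P(\pi(\omega) = \mathcal P \text{ refining structure needed}) \ge$ the product of the two marginal connection probabilities in an averaged sense; summing against the nonnegative weights $\prod m(\cdot)$ preserves the inequality. I would organize the computation so that after unwinding both sides via the spin-correlation-via-connectivity formula, the inequality reduces to: for increasing events $A_1,\ldots,A_m$ determined by $\omega$-connectivities, $\mathbf P$ satisfies $\langle \mathbf 1_{A_1} \cdots \rangle$-type products obey FKG, which is exactly Proposition~\ref{prop:fkgomega}.

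The main obstacle I anticipate is the bookkeeping around the parity constraints: unlike the Ising case ($q=2$, spins $\pm 1$), here $m(2\ell)$ is a genuine moment that depends on $\ell$, not just on its parity, so when two blocks merge under a coarsening the weight changes from $m(2\ell_1)m(2\ell_2)$ to $m(2\ell_1+2\ell_2)$, and one must verify that this change goes the right way --- this is not automatic and is precisely where the argument could break for a badly chosen symmetric set $Q$. The resolution, following \cite{grimPotts}, is that one never actually needs $m(2\ell_1+2\ell_2) \ge m(2\ell_1)m(2\ell_2)$ pointwise; instead the duplicated-system identity arranges things so that only the connection (increasing) events are compared, and the $m$-values appear as fixed nonnegative coefficients on both sides. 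Making that reorganization precise, so that the FKG inequality for $\omega$ is applied to the correct increasing random variables with the correct nonnegative coefficients, is the crux of the proof.
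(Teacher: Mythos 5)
Your proposal identifies the right ingredients (the spin-correlation-via-connectivity formula from the preceding proposition, plus positive association of $\omega$ for $a+b\le1$), but it overcomplicates matters and, more importantly, sidesteps the one simple inequality that makes the paper's argument go through, while incorrectly claiming that inequality should be avoided. The paper's proof is a three-line chain: it uses the elementary fact $m(r+s)\geq m(r)m(s)$ for all $r,s\ge 0$ --- which you dismiss as something ``one never actually needs pointwise.'' In fact one does need it, and it is harmless: for $\sigma_0$ uniform on a symmetric set $Q$, it reduces to $\mathbf E[\sigma_0^{r}\sigma_0^{s}]\geq\mathbf E[\sigma_0^{r}]\mathbf E[\sigma_0^{s}]$, which holds because $\sigma_0^r$ and $\sigma_0^s$ are both monotone functions of $|\sigma_0|$ when $r,s$ are even, and trivially when one of $r,s$ is odd (the right side vanishes). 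This moment inequality does two jobs simultaneously: (i) it shows that $\xi\mapsto\prod_{A\in\pi(\xi)}m(\sum_{v_j\in A}r_j)$ is increasing on $\Omega$, because coarsening the partition (adding edges) merges blocks and replaces $m(\ell_1)m(\ell_2)$ by $m(\ell_1+\ell_2)$; and (ii) it gives the termwise bound $m(\sum r_j+\sum s_j)\geq m(\sum r_j)\,m(\sum s_j)$ inside the expectation. After step (ii), one has $\mathbf E[XY]$ for two increasing functions $X,Y$ of the \emph{same} $\omega$, and FKG (Proposition~\ref{prop:fkgomega}) finishes it.

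Your duplicated-system apparatus introduces a genuine gap: you set up two independent copies $\omega^{(1)},\omega^{(2)}$ and want to compare a single-copy expectation to a product over two independent copies, but FKG only compares $\mathbf E[XY]$ with $\mathbf E[X]\mathbf E[Y]$ for $X,Y$ defined on the same configuration space. The passage from ``common refinement $\pi_1\wedge\pi_2$'' to an inequality is never actually closed in your sketch, and the worry you flag about whether merging blocks ``goes the right way'' is exactly the point that the moment inequality $m(r+s)\geq m(r)m(s)$ settles in one stroke. Dropping the duplication and using that inequality directly recovers the paper's proof.
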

\begin{proof}It is not difficult to prove that $m(r+s)\geq m(r)m(s)$ for $r,s\in  \{0,1,\ldots \}$ (see e.g.~\cite{grimPotts}).
Therefore, the function 
\[
\xi \mapsto \prod_{A\in \pi(\xi)} m\Big(\sum_{v_j\in A} r_j \Big)
\]
is increasing on $\Omega$.
Combining this with the proposition above and the positive association of $\omega$ under $\mathbf P$ from Proposition~\ref{prop:fkgomega}, we get
\begin{align*}
\langle \sigma(v_1)^{r_1+s_1} \cdots \sigma(v_k)^{r_k+s_k} \rangle &= \mathbf E \Big[ \prod_{A\in \pi(\omega)} m\Big(\sum_{v_j\in A} r_j +\sum_{v_j\in A} s_{j}\Big)\Big] 
\\ & \geq \mathbf E \Big[ \prod_{A\in \pi(\omega)}    m\Big(\sum_{v_j\in A} r_{j} \Big) m\Big(\sum_{v_j\in A} s_{j}\Big)\Big]  
\\  & \geq \mathbf E \Big[ \prod_{A\in \pi(\omega)}   m\Big(\sum_{v_j\in A} r_{j} \Big)\Big] \mathbf E \Big[ \prod_{A\in \pi(\omega)}   m\Big(\sum_{v_j\in A} s_{j}\Big)\Big]  
\\ &= \langle \sigma({v_1})^{r_1} \cdots \sigma({v_k})^{r_k} \rangle\langle \sigma({v_1})^{s_1} \cdots \sigma({v_k})^{s_k} \rangle. \qedhere
\end{align*} 
\end{proof}

\subsection{Height function and percolation} \label{sec:homega}
We will now compare the variance of the difference of the height function $h'$ between two faces $u_1$ and $u_2$ to the expected number of clusters
of the percolation models that one has to cross when going from $u_1$ to $u_2$. 
We assume here that $M$ is of genus zero but similar arguments taking into account the topology of clusters can be applied to surfaces of higher genus.

Consider the percolation configuration $\omega$, and let $\sigma $ and $\sigma' $ be consistent with~$\omega$, i.e., $\sigma$ is 
constant on the clusters of $\omega$ and $\eta(\sigma') \subseteq \omega$. For a cluster $\mathcal C$ of $\omega$, 
define $\sigma'_{\mathcal C}: \mathsf U \to Q'$ to be the spin configuration which is equal to $\sigma'$ on the faces incident on $\mathcal C$ and satisfies 
$\eta(\sigma'_{\mathcal C}) \subseteq \mathcal C$. In other words, $\sigma'_{\mathcal C}$ is the spin configuration 
obtained from $\sigma'$ by erasing the contours in $\eta(\sigma')$ which are not contained in $\mathcal C$ and 
changing the spins in a consistent way.  Also, denote by $\sigma({\mathcal C})$ the value of $\sigma$ assigned to any vertex in $\mathcal C$.
Fix $u_1,u_2\in \mathsf U$, and let 
\begin{align*} 
dh' &=dh'(u_1,u_2)=h(u_2)-h(u_1), \text{ and } d\sigma'_{\mathcal C}=d\sigma'_{\mathcal C}(u_1,u_2)=\sigma'_{\mathcal C}(u_2 )-  \sigma'_{\mathcal C}(u_1).
\end{align*}
We claim that
\begin{align} \label{eq:hcluster}
dh' =\sum_{\mathcal C} \sigma({\mathcal C}) d\sigma'_{\mathcal C}.
\end{align}
To justify this, we will say that a non-trivial cluster $\mathcal C$ of $\omega$ \emph{disconnects} $u_1$ from~$u_2$, if the two faces belong to two different connected 
components of $M\setminus \mathcal C$, where we think of $\mathcal C$ as the closed subset of $M$ given by the union of its edges.
\begin{remark} \label{rem:ceta}
Note that if $\mathcal C$ does not disconnect $u_1$ from $u_2$, then $d\sigma'_{\mathcal C}=0$. 
Actually if $d\sigma'_{\mathcal C}\neq 0$, then $\mathcal C$ necessarily contains a connected component of $\eta(\sigma')$ that disconnects $u_1$ from $u_2$. 
\end{remark}
By the above remark, 
the sum in \eqref{eq:hcluster} can be restricted to only these clusters that disconnect $u_1$ from~$u_2$.
Let $\gamma =\{ \tilde u_1,\ldots, \tilde u_l\} $ be a path of pairwise adjacent faces with $ \tilde u_1= u_1$ and $\tilde u_l=u_2$, and such that the only 
non-trivial clusters of $\omega$ that it crosses are those that disconnect $u_1$ from~$u_2$, and moreover $\gamma$ crosses each of 
these clusters only once. For $j\in\{1,\ldots,l-1$\}, let $v_j$ be one of the two vertices of the edge dual to $\{\tilde u_j, \tilde u_{j+1} \}$.
By the definition of $h'$ \eqref{eq:proph2} we have
\[
dh' =\sum_{j=1}^{l} \sigma(v_j) (\sigma'(\tilde u_j)-\sigma'(\tilde u_{j+1})).
\]
Note that as long as $\gamma$ stays in between two clusters of $\omega$, the increments in the sum above are zero since the value of $\sigma'(\tilde u_j)$ is constant.
On the other hand, when $\gamma$ crosses a cluster $\mathcal C$, then the value of $\sigma(v_j)$ is constant and equal to $\sigma(\mathcal C)$, and hence, 
by a telescopic sum, the contribution corresponding to $\mathcal C$ is exactly $ \sigma({\mathcal C}) d\sigma'_{\mathcal C}$. This justifies \eqref{eq:hcluster}. 

We will now use this formula to estimate the variance of~$dh'$.
To this end, let $\sigma_0$ be a random variable uniformly distributed on $Q$.
By property \eqref{eq:step1} and~\eqref{eq:step2} from Proposition~\ref{prop:coupling}, 
conditioned on $\omega$, $\sigma(\mathcal C)$ and $\sigma'_{\mathcal C}$ are independent, and moreover $\sigma(\mathcal C)\sim \sigma_0$ and $\sigma(\mathcal C') \sim \sigma_0$ are independent for different clusters $\mathcal C$ and~$\mathcal C'$ of $\omega$. Hence, using \eqref{eq:hcluster} we can write
\begin{align*}
\mathbf{Var}[d h'] =& \mathbf E \Big[ \Big(\sum_{\mathcal C} \sigma({\mathcal C})d\sigma'_{\mathcal C}\Big)^2\Big] \\
= &\sum_{\omega \subseteq \mathsf E } \sum_{\mathcal C_1,  \mathcal C_2 \subseteq \omega}  \mathbf{E}  [  \sigma({\mathcal C_1})d\sigma'_{ \mathcal C_1} \sigma({\mathcal C_2})d\sigma'_{ \mathcal C_2}  \mid\omega]\mathbf P(\omega)  \\
= &\sum_{\omega \subseteq \mathsf E } \sum_{\mathcal C_1,  \mathcal C_2 \subseteq \omega}  \mathbf{E}  [  \sigma({\mathcal C_1}) \sigma({\mathcal C_2}) \mid \omega]\mathbf{E}  [  d\sigma'_{ \mathcal C_1}  d\sigma'_{ \mathcal C_2}  \mid \omega]\mathbf P(\omega)  \\
= & \mathbf{E}  [  \sigma_0^2] \sum_{\omega \subseteq \mathsf E } \sum_{ \mathcal C \subseteq \omega} \mathbf{E}  [  ({d\sigma'_{ \mathcal C}})^2   \mid \omega]\mathbf P(\omega) \\
=&\mathbf{E}  [  \sigma_0^2] \mathbf{E}\Big[ \sum_{\mathcal C} ({d\sigma'_{ \mathcal C}})^2 \Big] \\
=&\mathbf E[\sigma_0^2] \sum_{d\neq 0} d^2 \mathbf E \big[N_d\big],
\end{align*}
where $N_d=N_d({u_1,u_2})$ is the number of clusters $\mathcal C$ of $\omega$ such that $d\sigma'_{\mathcal C}=d$.
Define $N_{\neq 0}= \sum_{d\neq 0}N_d$, and let $C/2=\max \{| i|: i\in Q'\}$. Then $|d\sigma'_{\mathcal C}| \leq C$, and from the above computation we immediately get the following equivalence up to constants.
\begin{proposition} \label{prop:uptoconst}
We have
\begin{align*} 
C^2 \mathbf E[\sigma_0^2]  \mathbf E[N_{\neq 0} ]\geq \mathbf{Var}[dh']  \geq \mathbf E[\sigma_0^2]  \mathbf E\big[ N_{\neq 0}\big ].
\end{align*}
\end{proposition}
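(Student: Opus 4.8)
The plan is to read the statement off the identity $\mathbf{Var}[dh'] = \mathbf E[\sigma_0^2]\sum_{d\neq 0} d^2\,\mathbf E[N_d]$ obtained in the computation preceding the statement, so that the proof reduces to a two-sided bound on the factor $d^2$ inside that finite sum. First I would record why the identity holds, since this is its only substantive content. Conditioning on $\omega$, Proposition~\ref{prop:coupling} gives that $\{\sigma(\mathcal C)\}_{\mathcal C}$ is an i.i.d.\ family of uniform $Q$-valued spins, one per cluster of $\omega$, and that this family is independent of $\sigma'$, hence of every quantity that is a function of $(\omega,\sigma')$ such as the increments $d\sigma'_{\mathcal C}$. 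Since $Q=-Q$, each $\sigma(\mathcal C)$ has conditional mean $\mathbf E[\sigma_0]=0$ and conditional second moment $\mathbf E[\sigma_0^2]$; in particular, by \eqref{eq:hcluster}, $\mathbf E[dh'\mid\omega]=\sum_{\mathcal C}\mathbf E[\sigma(\mathcal C)\mid\omega]\,\mathbf E[d\sigma'_{\mathcal C}\mid\omega]=0$, so $\mathbf{Var}[dh']=\mathbf E[(dh')^2]$. Expanding the square in \eqref{eq:hcluster}, the conditional independence lets one factor $\mathbf E[\sigma(\mathcal C_1)\sigma(\mathcal C_2)\,d\sigma'_{\mathcal C_1}d\sigma'_{\mathcal C_2}\mid\omega]$ into a factor depending only on the $\sigma(\mathcal C_i)$ and one depending only on $\sigma'$; the former is $0$ when $\mathcal C_1\neq\mathcal C_2$ and $\mathbf E[\sigma_0^2]$ when $\mathcal C_1=\mathcal C_2$. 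Averaging over $\omega$ and grouping clusters by the common value $d$ of $d\sigma'_{\mathcal C}$ yields the displayed identity; this reproduces exactly the chain of equalities already written out, the only delicate point being the factorization of the conditional expectation, which is where independence of $\sigma$ and $\sigma'$ given $\omega$ enters.

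Given the identity, both inequalities follow by bounding $d^2$ in the sum $\sum_{d\neq 0} d^2\,\mathbf E[N_d]$ and recalling that $N_{\neq 0}=\sum_{d\neq 0}N_d$. For the upper bound I would use the estimate $|d\sigma'_{\mathcal C}|\le C$ with $C=2\max\{|i|:i\in Q'\}$ recorded above, so that $d^2\le C^2$ for every nonzero summand and hence $\sum_{d\neq 0}d^2\,\mathbf E[N_d]\le C^2\,\mathbf E[N_{\neq 0}]$. For the lower bound I would use that every nonzero value $d$ taken by $d\sigma'_{\mathcal C}$ is a difference of two distinct elements of $Q'$, which in the normalization relevant here---$Q'$ a symmetric set of integers---forces $d^2\ge 1$, so $\sum_{d\neq 0}d^2\,\mathbf E[N_d]\ge \mathbf E[N_{\neq 0}]$. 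Multiplying both inequalities by $\mathbf E[\sigma_0^2]\ge 0$ gives the two displayed bounds.

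I do not expect a genuine obstacle here: the proposition is an immediate corollary of the preceding computation, and the only point deserving a remark is the lower bound's use of $d^2\ge 1$. This holds whenever the elements of $Q'$ lie in a set with minimal nonzero gap at least one---in particular for $Q'\subset\mathbb Z$, which covers the applications of the next two sections; for $Q'=\{-1,+1\}$ (the six-vertex case) both inequalities are in fact equalities. For a statement valid for an arbitrary symmetric $Q'\subset\mathbb R$ one would simply replace the lower constant by $\big(\min\{|x-y|:x,y\in Q',\ x\neq y\}\big)^2$.
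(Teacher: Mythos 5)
Your proof is correct and follows essentially the same route as the paper: read the two-sided bound off the identity $\mathbf{Var}[dh']=\mathbf E[\sigma_0^2]\sum_{d\neq 0}d^2\,\mathbf E[N_d]$ derived just above the statement, using $d^2\le C^2$ for the upper bound and $d^2\ge 1$ for the lower one. Your closing remark is a genuine clarification the paper leaves implicit: the lower bound with constant $1$ is only literally valid when the minimal nonzero gap of $Q'$ is at least $1$ (e.g.\ $Q'\subset\mathbb Z$); in general the correct constant is $\bigl(\min\{|x-y|:x,y\in Q',\,x\neq y\}\bigr)^2$, consistent with the paper's phrase ``equivalence up to constants'' and with its own example $Q'=\{-i,i\}$, where the identity $\mathbf{Var}[dh']=4i^2\,\mathbf E[\sigma_0^2]\,\mathbf E[N_{\neq 0}]$ shows the stated lower bound requires $i\ge 1/2$.
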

Note that in the special case when $q'=2$ and $Q'=\{ -i,i\}$, we actually get the equality
\begin{align*}
 \mathbf{Var}[dh']  = 4i^2 \mathbf E[\sigma_0^2] \mathbf E\big[ N_{\neq 0}\big ].
\end{align*}
We note that this identity in the setting of the double random current model and the related height function was first obtained in~\cite{DumCopLis}, where it was
used to establish continuity of the phase transition of the Ising model on any bi-periodic planar graph.

To prove the final result of this section we will need the following description of the mutual conditional laws for the two bond percolation processes.
\begin{lemma} \label{lem:condomega}
Assume that $a+b \geq1$. Then conditioned on $(\omega', \sigma')$, the percolation configuration $\omega$ is distributed like $\eta(\sigma') \cup \zeta$, where $\zeta$ is an independent bond percolation process on $(\omega')^{\dagger}$ with success probability $(1-b)/a$.
\end{lemma}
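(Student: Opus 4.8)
The plan is to compute the weight of a configuration $(\omega,\omega',\sigma,\sigma')\in\Omega\Sigma_{\geq 1}$ with $(\omega',\sigma')$ held fixed, and to recognize the result as the product of the deterministic constraint $\eta(\sigma')\subseteq\omega$ with an independent Bernoulli$((1-b)/a)$ percolation on the edges dual to $(\omega')^\dagger$. Recall from~\eqref{eq:weight2} that the weight of $(\omega,\omega',\sigma,\sigma')\in\Omega\Sigma_{\geq 1}$ is
\[
a^{|\eta(\sigma')|}(1-b)^{|\omega\setminus\eta(\sigma')|}b^{|\eta(\sigma)|}(1-a)^{|\omega'\setminus\eta(\sigma)|}(a+b-1)^{|\mathsf E^*\setminus(\omega^*\cup\omega')|}.
\]
When we fix $(\omega',\sigma')$, the factors $b^{|\eta(\sigma)|}$ and $(1-a)^{|\omega'\setminus\eta(\sigma)|}$ depend only on $\sigma$ (which is free to vary), and summing over $\sigma$ produces a factor independent of $\omega$; this is exactly the Edwards--Sokal/Proposition~\ref{prop:coupling} structure, so for the conditional law of $\omega$ they can be absorbed into the normalization. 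So I need to understand how $a^{|\eta(\sigma')|}(1-b)^{|\omega\setminus\eta(\sigma')|}(a+b-1)^{|\mathsf E^*\setminus(\omega^*\cup\omega')|}$ behaves as $\omega$ varies with $(\omega',\sigma')$ fixed.

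The key combinatorial point is to identify which edges of $\omega$ are free. First, $\eta(\sigma')\subseteq\omega$ is forced by the definition of $\Omega\Sigma$, contributing the deterministic part. Second, since $a+b\geq 1$ we have $\omega'\subseteq\omega^\dagger=\mathsf E^*\setminus\omega^*$, equivalently $\omega\subseteq(\omega')^\dagger$; combined with $\eta(\sigma')\subseteq\omega$ this says the only edges of $\omega$ that are not already determined are those in $(\omega')^\dagger\setminus\eta(\sigma')$, and each such edge may independently be in $\omega$ or not. For such an edge $e$: if $e\in\omega$ it contributes a factor $1-b$ from $(1-b)^{|\omega\setminus\eta(\sigma')|}$ and nothing to the last factor (since then $e^*\in\omega^*$, so $e^*\notin\mathsf E^*\setminus(\omega^*\cup\omega')$); if $e\notin\omega$ it contributes nothing to $(1-b)^{|\omega\setminus\eta(\sigma')|}$ and, since $e^*\notin\omega^*$ and $e^*\notin\omega'$ (as $e\in(\omega')^\dagger$ means $e^*\notin(\omega')^*$... wait, need $e^*\notin\omega'$), a factor $a+b-1$ from the last term. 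One must check that for $e\in(\omega')^\dagger\setminus\eta(\sigma')$ one indeed has $e^*\notin\omega'$: this is precisely the content of $e\in(\omega')^\dagger=\mathsf E^*\setminus(\omega')^*$ reinterpreted on the dual, i.e.\ $e^*\notin\omega'$. Hence each free edge independently carries weight $1-b$ if open and $a+b-1$ if closed, so its conditional open-probability is $(1-b)/((1-b)+(a+b-1))=(1-b)/a$.

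Assembling: conditioned on $(\omega',\sigma')$, $\omega=\eta(\sigma')\cup\zeta$ where $\zeta\subseteq(\omega')^\dagger\setminus\eta(\sigma')$ is a product Bernoulli$((1-b)/a)$ field; since $\eta(\sigma')\subseteq(\omega')^\dagger$ we may equivalently say $\zeta$ ranges over $(\omega')^\dagger$ with the edges of $\eta(\sigma')$ already forced open, which is the statement of the lemma. The main obstacle, such as it is, is bookkeeping: one must carefully track, for each edge $e$ and its dual $e^*$, which of the five sets $\eta(\sigma')$, $\omega\setminus\eta(\sigma')$, $\eta(\sigma)$, $\omega'\setminus\eta(\sigma)$, $\mathsf E^*\setminus(\omega^*\cup\omega')$ it lies in, and verify that exactly the edges of $(\omega')^\dagger\setminus\eta(\sigma')$ are free while all others are frozen by the conditioning; the factorization over these free edges then makes the independence manifest. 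I also need to note that the last factor $(a+b-1)^{|\cdot|}$ must be interpreted correctly when $a+b=1$, but in that degenerate case $\omega^\dagger=\omega'$ deterministically, $(\omega')^\dagger\setminus\eta(\sigma')=\eta(\sigma')\setminus\eta(\sigma')$ is... actually one checks $(1-b)/a=1$ there and the claim reads $\omega=(\omega')^\dagger$, consistent with $\omega^\dagger=\omega'$.
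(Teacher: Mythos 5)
Your proof is correct, and it takes a somewhat different route from the paper. The paper's proof is a one-liner appealing to the generative description of the coupling: for each edge pair whose state is decided by the four-sided die, conditioning on $e^*$ being closed yields open probability $(1-b)/\bigl((1-b)+(a+b-1)\bigr)=(1-b)/a$ for $e$. You instead work directly from the weight formula~\eqref{eq:weight2}, factor out the $\sigma$-dependent terms, and identify the free edges $(\omega')^\dagger\setminus\eta(\sigma')$ as carrying an independent product Bernoulli structure. Both arguments are valid; yours is more explicit and makes the bookkeeping visible, while the paper's is shorter but asks the reader to trust the coupling construction. One point in your argument worth spelling out slightly more: when you say summing over $\sigma$ yields a factor independent of $\omega$, the reason the sum ranges over an $\omega$-independent set is that, for $a+b\geq 1$, we have $\omega\subseteq(\omega')^\dagger$, so the constraint ``$\sigma$ constant on clusters of $\omega$'' is already implied by $\eta(\sigma)\subseteq\omega'$ (equivalently, $\sigma$ constant on components of $(\omega')^\dagger$); hence the constraint on $\sigma$ truly depends only on $(\omega',\sigma')$.
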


\begin{proof}
This follows from the fact that, tossing the four-sided die from the right-hand side of the table in~\eqref{step1}, and conditioning on $e^*$ being closed,
we open $e$ with probability $(1-b)/a$.
\end{proof}

We say that a cluster $\mathcal C'$ of $\omega'$ \emph{disconnects} face $u_1$ from $u_2$, if either one of the two faces is in $\mathcal C'$, or they belong to two different connected 
components of $M\setminus \mathcal C'$, where we think of $\mathcal C'$ as the closed subset of $M$ given by the union of its edges.
Note that the trivial clusters containing $u_1$ and $u_2$ satisfy this definition.

The next result compares the number of clusters of $\omega'$ disconnecting $u_1$ from $u_2$ with the number of clusters $\mathcal C$ of $\omega$ which satisfy $d \sigma'_{\mathcal C} \neq 0$ in the case $a+b\geq 1$.
\begin{proposition} \label{prop:hfclusters}
Let $N'=N'(u_1,u_2)$ be the number of clusters of $\omega'$, that \emph{disconnect} $u_1$ from $u_2$. 
Assume that $a+b\geq 1$. Then 
\[
\mathbf E[ N_{\neq 0} ] \geq (1-\tfrac{1}{q'}) (\mathbf E[ N']-1).
\]
\end{proposition}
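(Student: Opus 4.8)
The plan is to couple the two percolation configurations using Lemma~\ref{lem:condomega} and show that each cluster $\mathcal C'$ of $\omega'$ that disconnects $u_1$ from $u_2$ has a reasonable chance of "surviving" into a cluster $\mathcal C$ of $\omega$ with $d\sigma'_{\mathcal C}\neq 0$. First I would condition on $(\omega',\sigma')$. By Lemma~\ref{lem:condomega} we then have $\omega = \eta(\sigma') \cup \zeta$ with $\zeta$ an independent Bernoulli$((1-b)/a)$ percolation on $(\omega')^{\dagger}$, so in particular $\eta(\sigma')\subseteq \omega$ and the clusters of $\omega$ refine (in the complementary sense) those of $\omega'$: adding edges of $(\omega')^{\dagger}$ can only merge faces, hence split vertex-clusters, i.e. it makes $\omega$ \emph{smaller} than $\omega'^{\dagger}$ would allow. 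The key geometric point is that for a cluster $\mathcal C$ of $\omega$, $d\sigma'_{\mathcal C}\neq 0$ precisely when $\mathcal C$ contains a component of $\eta(\sigma')$ separating $u_1$ from $u_2$ (Remark~\ref{rem:ceta}), and $\sigma'_{\mathcal C}(u_2)-\sigma'_{\mathcal C}(u_1)$ is then a nonzero difference of two spin values of $\sigma'$, which occurs as soon as the relevant interface component is genuinely isolated inside $\mathcal C$.

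Next I would set up the combinatorial bookkeeping on the dual side. Consider the clusters $\mathcal C'_1,\dots,\mathcal C'_{N'}$ of $\omega'$ disconnecting $u_1$ from $u_2$; these are naturally nested, so order them so that $\mathcal C'_1$ is the innermost (containing $u_1$, say) and successive ones are "further out", ending with the trivial cluster containing $u_2$. Between consecutive disconnecting clusters $\mathcal C'_j$ and $\mathcal C'_{j+1}$ there is an "annular" region of faces; the boundary between them carries a piece of $\eta(\sigma')$ (since $\sigma'$ is constant on clusters of $\omega'$ and the two clusters are distinct, the spin must jump somewhere in between), and this piece of $\eta(\sigma')$ is a component of $\eta(\sigma')$ separating $u_1$ from $u_2$. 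So there are at least $N'-1$ such separating components of $\eta(\sigma')$, call them $\Lambda_1,\dots,\Lambda_{N'-1}$, each lying in the annulus between two consecutive disconnecting $\omega'$-clusters. Now in the coupling $\omega=\eta(\sigma')\cup\zeta$, the component $\Lambda_j$ sits inside some cluster $\mathcal C$ of $\omega$, and I need $d\sigma'_{\mathcal C}\neq 0$; by Remark~\ref{rem:ceta} this fails only if $\mathcal C$ also absorbs, via $\zeta$-edges, enough of the interface structure to kill the jump — concretely, $\sigma'_{\mathcal C}$ is obtained from $\sigma'$ by keeping only the interfaces inside $\mathcal C$, so $d\sigma'_{\mathcal C}$ is the net spin jump across those components of $\eta(\sigma')$ contained in $\mathcal C$ that separate $u_1$ from $u_2$. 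The clean way to force this to be nonzero is to demand that the annular region surrounding $\Lambda_j$ (in particular a face just on each side of $\Lambda_j$, within the $\omega'$-annulus) is not invaded: if none of the $\zeta$-edges merging across that annulus are open, then $\mathcal C$ is "trapped" and contains exactly one separating interface component, giving $d\sigma'_{\mathcal C} = \sigma'(u_2)-\sigma'(u_1)$ restricted appropriately, which is nonzero because $\sigma'$ takes different values on the two sides. This last event has probability at least some constant depending only on the spin — and here is where the factor $1-1/q'$ enters: conditioning only on $(\omega',\sigma')$ is not quite enough, since the chosen interface $\Lambda_j$ might be forced by $\sigma'$ itself; instead I would \emph{not} condition on $\sigma'$ at the crucial step but resample it, using property~\eqref{eq:step1} applied to $\sigma'$ (valid since conditioned on $\omega'$ we can take $\sigma'$ uniform on clusters of $\omega'$ up to the Potts interaction), and note that two neighbouring clusters of $\omega'$ carry different spins with probability at least $1-1/q'$.

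Putting it together: conditionally on $\omega'$, for each $j=1,\dots,N'-1$ the event "the two $\omega'$-clusters flanking the $j$-th annulus receive different $\sigma'$-spins" has probability $\geq 1-1/q'$ (by the uniform/Potts assignment, since that is an increasing-type event and neighbouring clusters disagree with probability at least that of the uniform case), and on that event the corresponding cluster $\mathcal C$ of $\omega$ contributes a distinct unit to $N_{\neq 0}$; distinctness follows because the $\Lambda_j$ lie in disjoint annuli and hence in distinct clusters of $\omega$. Therefore
\[
\mathbf E[N_{\neq 0}\mid \omega'] \;\geq\; (1-\tfrac1{q'})(N'-1),
\]
and taking expectations over $\omega'$ gives the claim. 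The main obstacle I anticipate is the bookkeeping in the middle step: making rigorous that distinct disconnecting clusters of $\omega'$ yield distinct clusters $\mathcal C$ of $\omega$ with $d\sigma'_{\mathcal C}\neq0$, and correctly isolating which randomness ($\zeta$ versus the spin assignment to $\omega'$-clusters) produces the $1-1/q'$ constant without double-counting — in particular handling the degenerate cases where an annulus is a single edge or where $\Lambda_j$ touches $u_1$ or $u_2$, and checking that one really only needs the spin-disagreement event and not any control on $\zeta$ at all (since even a fully invaded $\omega$ still has $\sigma'_{\mathcal C}$ determined by $\sigma'$ on the incident faces, so the jump persists as long as the spins on the two sides differ).
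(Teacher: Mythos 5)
Your proof takes essentially the same route as the paper's: condition on $\omega'$ alone, use the uniform i.i.d.\ assignment of $\sigma'$-spins to $\omega'$-clusters (the dual of Proposition~\ref{prop:coupling}\eqref{eq:step1}), count adjacent disagreements among the $N'$ disconnecting clusters (expected number $(1-\tfrac1{q'})(N'-1)$), and note that each disagreeing pair forces at least one $\omega$-cluster with $d\sigma'_{\mathcal C}\neq 0$ in the annulus between them, these annuli being disjoint because the $\omega'$-clusters act as dual barriers for $\omega$ when $a+b\geq 1$. The detours you take---the false intermediate claim that two distinct $\omega'$-clusters must carry different $\sigma'$-spins (so that $N'-1$ separating interfaces $\Lambda_j$ would exist deterministically), the aside about ``increasing-type/Potts'' events (the assignment is simply uniform, so the disagreement probability is exactly $1-\tfrac1{q'}$), and the attempt to control $\zeta$---you correctly retract yourself at the end; the clean observation, which the paper states directly, is that the sum of $d\sigma'_{\mathcal C}$ over all $\omega$-clusters in the annulus equals the net spin jump between the two flanking $\omega'$-clusters, hence is nonzero whenever they disagree, with no control on $\zeta$ required.
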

\begin{proof} 
Recall that by Proposition~\ref{prop:coupling}, when conditioned on $\omega'$, the spins $\sigma'$ are assigned  to each cluster of
$\omega'$ independently and uniformly in $Q'$. Let $\mathcal C'_1,\ldots,\mathcal C'_{N'}$ be the clusters of $\omega'$ that disconnect $u_1$ from $u_2$ ordered
according to the first intersection points with a chosen path from $u_1$ to $u_2$. Note that if two consecutive clusters $\mathcal C'_l, \mathcal C'_{l+1}$ are assigned different spins, then for topological reasons, there must exist
a circuit in $\eta(\sigma')$ disconnecting $\mathcal C'_i$ from $\mathcal C'_{l+1}$, and hence also disconnecting $u_1$ from $u_2$.

By Lemma~\ref{lem:condomega}, conditioned on $\sigma'$ and $\omega'$, we recover $\omega$ by choosing randomly edges from $(\omega')^{\dagger}$ and adding them to $\eta(\sigma')$.
This means that for every pair $\mathcal C'_l, \mathcal C'_{l+1}$ with different spin $\sigma'$, there exists at least one cluster $\mathcal C$ of $\omega$, disconnecting $u_1$ from $u_2$.
Moreover, at least one of these clusters must satisfy $d\sigma'_{\mathcal C}(u_1,u_2)\neq 0$ (since the sum of $d\sigma'_{\mathcal C}$ over all such clusters is nonzero).
Since the clusters of $\omega'$ are disjoint, the clusters of $\omega$ corresponding to different pairs $\mathcal C'_l, \mathcal C'_{l+1}$ are also disjoint.
This means that $N_{\neq 0}$ is at least equal to the number of pairs $\mathcal C'_l, \mathcal C'_{l+1}$ with different spin $\sigma'$.
The latter is equal in distribution to the number of nearest neighbour disagreements in an i.i.d.\ sequence of length $N'$ and distribution $\sigma_0$. Hence, we get the desired inequality
by an elementary computation of the expectation.
\end{proof}

\section{The model on $\mathbb Z^2$} \label{sec:square}
 
We now apply these relatively abstract results to study the behaviour of the model on $\mathbb Z^2$. We consider three different regimes of parameters:
\begin{itemize} 
\item the line $a+b=1$ corresponding to the random cluster model, 
\item the case when either $a$ or $b$ is small,
\item the self-dual case with $q=q'$ and $a=b\geq 1/2$. 
\end{itemize}
We show localization in the first two cases, except at the point $q=q'=2$, $a=b=1/2$ where delocalization is established. This was first proved in~\cite{GlaPel} for different boundary conditions (which are arguably more natural for the corresponding six-vertex model).
We also present some partial results towards establishing delocalization in the third case.

To this end, let $\Lambda_n$ be the $2n\times 2n$ box in $\mathbb Z^2$ centered around the origin, and let $\mathbf P_{\Lambda_n}$ be the law of the model on $\Lambda_n$.
The fact that $\Lambda_n$ is a concretely defined box will actually not be relevant for the arguments in the first two cases (as long as it is monotonically growing with $n$) and we choose it only for concreteness.
\subsection*{The random cluster model line $a+b=1$}
By Proposition~\ref{prop:RC} and Remark~\ref{rem:fkcrit}, for $a+b=1$, the marginal of $\mathbf P_{\Lambda_n}$ on $\omega$ is the $\textnormal{FK}(qq')$ random cluster model on $\Lambda_n$ with free boundary conditions and with parameter 
\[
p({a}) =\frac{q'}{q'+a^{-1}-1}.
\]
We recall that the critical value of $p$ for this model was rigorously established in~\cite{BefDum} and is equal to 
\begin{align} \label{eq:BefDum}
p_c(qq')= \frac{\sqrt{qq'}}{1+\sqrt{qq'}}.
\end{align}
In what follows we will call a random cluster model (possibly defined on a finite graph) \emph{subcritical}, resp.\ \emph{supercritical}, if the associated parameter $p$ is strictly smaller, resp.\ larger than $p_c$.

We also recall that the laws of the random cluster model on $\Lambda_n$ with free boundary conditions are stochastically increasing (by strong positive association and domain Markov property of the random cluster model,~see e.g.\ \cite{RC}), and hence the weak limit $\mathbf P_{\mathbb Z^2} =\lim_{n\to \infty} \mathbf P_{\Lambda_n}$ 
exists, and is the infinite-volume random cluster model with free boundary conditions (here, with a slight abuse of notation, we do not distinguish between the full 
distribution of the model and its marginal distribution on~$\omega$).

To study the height function we use the recent results rigorously establishing the order of phase transition in the random cluster model~\cite{DCST,Disc,SpiRay1}, and we conclude the following two different behaviours. 
\begin{theorem} \label{cor:twobeh}Consider $\mathbf P_{\Lambda_n}$ for $a+b=1$. Let $h'_n$ be the height function on the faces of $\Lambda_n$ with height zero 
assigned to the external face $u_{\infty}$, and let $u_0$ be a fixed face next to the origin of $\mathbb Z^2$. If $q=q'=2$ and $a=b=1/2$, then
\begin{align*}
  \mathbf{Var}_{\Lambda_n} [h'_n(u_0)] \to \infty
  \end{align*}
as $n \to \infty$, and otherwise there exists $C=C(q,a) < \infty$ such that 
\begin{align*} 
  \mathbf{Var}_{\Lambda_n} [h'_n(u_0)] \leq C
\end{align*}
for all $n$.
\end{theorem}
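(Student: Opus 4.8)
The plan is to derive both halves of the statement from the comparison inequalities of Proposition~\ref{prop:uptoconst} and Proposition~\ref{prop:hfclusters}, fed with known facts about the planar random cluster model. Since $h'_n(u_\infty)\equiv 0$ we have $\mathbf{Var}_{\Lambda_n}[h'_n(u_0)]=\mathbf{Var}_{\Lambda_n}[dh'(u_\infty,u_0)]$, and by Proposition~\ref{prop:uptoconst} this is comparable, up to two constants depending only on the fixed choice of $Q,Q'$, to $\mathbf E_{\Lambda_n}[N_{\neq 0}]$ with $N_{\neq 0}=N_{\neq 0}(u_\infty,u_0)$. By Proposition~\ref{prop:RC} the marginal of $\mathbf P_{\Lambda_n}$ on $\omega$ is the $\textnormal{FK}(qq')$ random cluster model on $\Lambda_n$ with free boundary conditions and parameter $p=p(a)$, and $\omega'=\omega^\dagger$. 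If $q=1$ or $q'=1$ then $\sigma\equiv 0$ or $\sigma'\equiv 0$ (the sets $Q,Q'$ being symmetric), so $h'_n$ is constant and there is nothing to prove; hence I assume $q,q'\ge 2$.

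For the \emph{delocalization} assertion I am at the self-dual point $q=q'=2$, $a=b=\tfrac12$, where by Remark~\ref{rem:fkcrit} one has $p=p_c(4)$, so $\omega$ is critical $\textnormal{FK}(4)$. Since $a+b=1\ge1$, combining Proposition~\ref{prop:hfclusters} with Proposition~\ref{prop:uptoconst} gives $\mathbf{Var}_{\Lambda_n}[h'_n(u_0)]\ge c\,(\mathbf E_{\Lambda_n}[N']-1)$, where $N'$ is the number of clusters of $\omega'$ disconnecting $u_0$ from $u_\infty$; it thus suffices to show $\mathbf E_{\Lambda_n}[N']\to\infty$. I would fix $\asymp\log n$ disjoint concentric dyadic annuli around $u_0$ inside $\Lambda_n$ and, in each, invoke the RSW/box-crossing estimates for critical $\textnormal{FK}(4)$~\cite{DCST} to obtain, with probability bounded below uniformly, a circuit of $\omega'$ around $u_0$ (which disconnects $u_0$ from $u_\infty$) together with a circuit of $\omega$ around $u_0$ in an adjacent sub-annulus. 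The $\omega$-circuit forces the $\omega'$-circuits from different annuli to lie in distinct clusters of $\omega'$ (an open primal circuit is a circuit of closed dual edges), so summing indicators over the $\asymp\log n$ annuli yields $\mathbf E_{\Lambda_n}[N']\gtrsim\log n$.

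For \emph{localization} I use the upper bound from Proposition~\ref{prop:uptoconst} together with Remark~\ref{rem:ceta}: if $d\sigma'_{\mathcal C}\ne0$ then $\mathcal C$ surrounds $u_0$ (disconnects it from $u_\infty$), so $\mathbf{Var}_{\Lambda_n}[h'_n(u_0)]\le C\,\mathbf E_{\Lambda_n}[N'']$, where $N''$ counts the clusters of $\omega$ surrounding $u_0$; the task is to bound $\mathbf E_{\Lambda_n}[N'']$ uniformly in $n$. Such clusters are disjoint, hence nested, so the $m$-th innermost surrounds $u_0$ at distance $\gtrsim m$. When $p<p_c(qq')$, or when $p=p_c(qq')$ with $qq'>4$ (the discontinuity regime~\cite{Disc}), the measure $\mathbf P_{\Lambda_n}$ is stochastically dominated by an infinite-volume free-boundary random cluster measure with exponential decay of connectivity; since the event ``$\omega$ has a circuit around $u_0$ at distance $\ge r$'' is increasing, a standard Peierls estimate then bounds $\mathbf E_{\Lambda_n}[N'']$ uniformly in $n$. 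When $p=p_c(qq')$ with $qq'\le4$ we have, since $q,q'\ge2$, necessarily $(q,q',a,b)=(2,2,\tfrac12,\tfrac12)$, which is the delocalized case already treated.

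The remaining case $p>p_c(qq')$ is the main obstacle: $\omega$ is then supercritical, so it surrounds $u_0$ macroscopically many times and the bound of the previous paragraph is vacuous. Here I would pass to the dual, which is subcritical (the parameter of $\omega'=\omega^\dagger$ is dual to $p$, hence $<p_c(qq')$): $m$ nested $\omega$-clusters surrounding $u_0$ force, by planar duality in each of the $m-1$ annular gaps between consecutive clusters, $m-1$ nested circuits of $\omega'$ around $u_0$, and the expected number of $\omega'$-circuits around $u_0$ is finite and uniform in $n$ by exponential decay of connectivity in the subcritical $\omega'$; this again bounds $\mathbf E_{\Lambda_n}[N'']$ and completes the proof. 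Apart from this case, the only delicate points are quoting the external inputs (RSW for $1\le q\le4$, discontinuity for $q>4$) in exactly the form needed and checking uniformity in $n$ of the Peierls-type estimates, which follows from stochastic domination by the infinite-volume limits and monotonicity of the circuit events.
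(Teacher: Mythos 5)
Your overall strategy matches the paper's: lower-bound the variance via Propositions~\ref{prop:uptoconst} and~\ref{prop:hfclusters} for delocalization, and upper-bound it via Proposition~\ref{prop:uptoconst} plus a Peierls count of surrounding clusters for localization, splitting by whether $p(a)$ is subcritical, critical with $qq'>4$, or supercritical. Your delocalization argument constructs circuits in dyadic annuli via RSW for critical $\textnormal{FK}(4)$; the paper instead cites~\cite{DCST} for the almost-sure infinitude of surrounding clusters under the infinite-volume measure and passes from $\mathbf P_{\mathbb Z^2}$ back to $\mathbf P_{\Lambda_n}$ by weak convergence of local observables plus monotone convergence. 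Both routes are legitimate; the paper's is a bit cleaner because it avoids re-deriving the annulus estimates.

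The gap is in your Case III ($p(a)>p_c$). You pass to the dual $\omega'=\omega^{\dagger}$ and claim the expected number of $\omega'$-circuits around $u_0$ is ``finite and uniform in $n$ by exponential decay of connectivity in the subcritical $\omega'$,'' invoking stochastic domination by the infinite-volume limit. But the monotonicity runs the wrong way here: free boundary conditions on $\omega$ correspond to wired boundary conditions on $\omega'$ in $\Lambda_n^*$, which stochastically \emph{dominate} the infinite-volume free-boundary law. Since ``at least $m$ circuits of $\omega'$ around $u_0$'' is an increasing event, you cannot directly upper-bound its probability by the infinite-volume quantity, and the Peierls bound as written does not go through. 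The paper explicitly flags this and fixes it by conditioning on the cluster $\mathcal C'_{\infty}$ of $u_{\infty}$ in $\omega'$: by the domain Markov property, the remaining configuration lives on $\Lambda_n^*$ with all edges incident on $\mathcal C'_{\infty}$ removed, which is a proper subgraph of $(\mathbb Z^2)^*$; this conditional law \emph{is} dominated by the infinite-volume law, and $\mathcal C'_{\infty}$ contributes at most one to the circuit count. Your sketch needs this (or an equivalent) additional step to be complete.

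A second, more minor point: in Case III you compare the number of nested $\omega$-clusters to the number of $\omega'$-circuits via a gap-by-gap duality argument. The paper uses the cleaner inequality $N\le N'+1$ coming directly from $\omega'=\omega^{\dagger}$; either works, but the paper's version is tighter and avoids manipulating individual annular gaps.
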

\begin{proof}
To prove the first statement, note that since $p(1/2)=2/3=p_c(4)$, $\mathbf P_{\mathbb Z^2}$ is the critical $\textnormal{FK}(4)$ random cluster model on $\mathbb Z^2$ with free boundary conditions.
In particular, by~\cite{DCST} we know that $\mathbf P_{\mathbb Z^2}$-almost surely, there are infinitely many clusters of $\omega$, and hence also of $\omega'=\omega^{\dagger}$, surrounding $u_0$, i.e.,
disconnecting $u_0$ from infinity.
Let $N'(k)$ be the number of clusters of $\omega'$ that {surround} $u_0$ and that are contained in the box $\Lambda_k$. Combining the lower 
bounds from Propositions~\ref{prop:hfclusters} and~\ref{prop:uptoconst} applied to $u_0$ and $u_{\infty}$, we obtain that there exists an (explicit) constant $C'>0$ such that for all $k\in\{1,2,\ldots\}$,
\[
\liminf_{n\to \infty}\mathbf{Var}_{\Lambda_n} [h'_n(u_0)]  \geq C' \liminf_{n\to \infty}( \mathbf E_{\Lambda_n}[N'(k)]  -1)= C' ( \mathbf E_{\mathbb Z^2}[N'(k)]  -1),
\]
where the identity follows from the weak convergence of $\mathbf P_{\Lambda_n}$ and the fact that $N'(k)$ is a bounded local random variable. 
It is now enough to notice that since $N'(k)$ is increasing in $k$, $\mathbf E_{\mathbb Z^2}[N'(k)] \to \infty$ as $k\to \infty$ by monotone convergence.

We now turn to the second statement and note that if either $q=1$ or $q'=1$, then the height function is trivially uniformly bounded almost surely. We will now use a variant of Peierls' argument to show that the number of clusters of $\omega$ surrounding $u_0$ is uniformly bounded in expectation. 
We consider three cases: 

{\bf Case I}: $p(a) < p_c$.
In this case, $\mathbf P_{\mathbb Z^2}$ is a subcritical random cluster model on $\mathbb Z^2$ with free boundary conditions.
By sharpness of the phase transition~\cite{BefDum}, this means that the $\mathbf P_{\mathbb Z^2}$-probability that two vertices belong to the same cluster of~$\omega$ decays exponentially fast with the distance between the vertices.
Let $N$ be the number of clusters of $\omega$ surrounding~$u_0$. We have
\begin{align}
\mathbf E_{\Lambda_n}[ N] & \leq \sum_{k=0}^n \mathbf P_{\Lambda_n}[\omega\textnormal{-cluster containing }(-k,-k) \textnormal{ surrounds } u_0] \nonumber \\
&\leq \sum_{k,l=0}^n \mathbf P_{\Lambda_n}[(-k,-k) \con{\omega} (l,l)] \nonumber \\
&  \leq  \sum_{k,l=0}^{\infty} \mathbf P_{\mathbb Z^2}[(-k,-k) \con{\omega} (l,l)] \nonumber \\
&\leq C, \label{eq:peierls}
\end{align}
with $C<\infty$ independent of $n$, where in the second last inequality we used monotonicity of connectivity probabilities with respect to the increasing boundary conditions, and
in the last one we used the fact that the connectivity probabilities decay exponentially for $\mathbf P_{\mathbb Z^2}$. 
We finish the proof by combining this inequality with the upper bound from Propositions~\ref{prop:hfclusters} applied to $u_0$ and $u_{\infty}$, and the obvious bound $\mathbf E_{\Lambda_n}[ N_{\neq 0}] \leq  \mathbf E_{\Lambda_n}[ N]$.

{\bf Case II}: $p(a) = p_c$. In this case, $\mathbf P_{\mathbb Z^2}$ is the critical $\textnormal{FK}(qq')$ random cluster model on $\mathbb Z^2$ with free boundary conditions and $qq'>4$.
In particular, by Theorem 1.2 of~\cite{Disc} establishing discontinuity of phase transition (see also~\cite{SpiRay1} for a more elementary proof), the $\mathbf P_{\mathbb Z^2}$-probability that two vertices belong to the same cluster of $\omega$ again decays exponentially fast and we can apply exactly the same arguments as in~\eqref{eq:peierls}. 

{\bf Case III}: $p(a) > p_c$. Note that by duality in the random cluster model (see e.g.\ \cite{RC}), the law of $\omega'=\omega^{\dagger}$ under $\mathbf P_{\Lambda_n}$, is again a subcritical random cluster model on the dual graph $\Lambda_n^*$. To finish the proof, one has to adjust the arguments in~\eqref{eq:peierls} 
since now the boundary conditions are stochastically \emph{decreasing} for $\omega'$. This issue can be overcome by conditioning on the cluster of~$u_{\infty}$. 

To this end, let $N'$ be the number of clusters of $\omega'$ which surround or contain~$u_0$. Since $\omega'=\omega^{\dagger}$, for topological reasons we have that $N\leq N'+1$, where $N$ is as in Case I.
To finish the proof, it is therefore enough to uniformly bound the expectation of $N'$.  Let $\mathcal C'_{\infty}$ be the cluster of $\omega'$ containing $u_{\infty}$, and 
let $\overline{\mathcal C'}_{\infty}$ be this cluster enlarged by all the edges of $\Lambda_n^{*}$ that are incident on $\mathcal C'_{\infty}$.
By the Markov property of the random cluster model, conditioned on $\mathcal C'_{\infty}$, the rest of the configuration $\omega'$ is distributed like the random cluster model
on $\Lambda_n^{*}$ with the edges in $\overline{\mathcal C'}_{\infty}$ removed. Note that this graph is a proper subgraph of $(\mathbb Z^2)^*$ (unlike $\Lambda_n^*$ where $u_{\infty}$ is of very high degree), 
and hence the law of of the rest of the configuration $\omega'$ is stochastically dominated by the law of $\omega^{\dagger}$ under $\mathbf P_{\mathbb Z^2}$. Since $\mathcal C'_{\infty}$ contributes
at most one to the count of clusters surrounding~$u_0$, we can now repeat the arguments as in~\eqref{eq:peierls} to finish the proof. 
\end{proof}

\subsection*{The case of small $a$ or $b$}
To treat this case, we first derive a simple general comparison result between $\omega$ and the $\textnormal{FK}(q)$ random cluster model. 
Note the difference with Proposition~\ref{prop:RC}, which is valid only for $a+b=1$, and in which~$\omega$ is shown to be a $\textnormal{FK}(qq')$ random cluster model.
\begin{lemma} \label{lem:comparison}
Assume that $M$ is of genus zero. Then the process $\omega$ stochastically dominates the $\textnormal{FK}(q)$ random cluster model on $\mathsf G$ with parameter $p=1-b$.
By duality, the analogous statement holds for $\omega'$ with $\mathsf G$ replaced by $\mathsf G^*$, and $b$ replaced by $a$.
\end{lemma}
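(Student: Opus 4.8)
The plan is to reduce the statement to positive association of the ordinary Fortuin--Kasteleyn random cluster model, which is available since $q\geq 1$. First I would invoke formula~\eqref{eq:od1} from Corollary~\ref{cor:marginal} and condition on a fixed dual spin configuration $\sigma'$: discarding the factor $a^{|\eta(\sigma')|}(1-b)^{-|\eta(\sigma')|}$, which is a constant in $\omega$ on the event $\{\eta(\sigma')\subseteq\omega\}$ enforced by the indicator, one reads off that $\mathbf P(\,\cdot\mid\sigma')$ is proportional to $q^{k(\omega)}(1-b)^{|\omega|}b^{|\mathsf E\setminus\omega|}\mathbf 1_{\{\eta(\sigma')\subseteq\omega\}}$. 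In other words, conditioned on $\sigma'$, the configuration $\omega$ is distributed like the $\textnormal{FK}(q)$ random cluster model on $\mathsf G$ with parameter $p=1-b$, conditioned on the event $\{\eta(\sigma')\subseteq\omega\}$ that all edges of $\eta(\sigma')$ are open.

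Next I would use that $\{\eta(\sigma')\subseteq\omega\}$ is an increasing event, that the $\textnormal{FK}(q)$ measure on the finite graph $\mathsf G$ is positively associated for $q\geq 1$ (see~\cite{RC}; the case $b=1$ is trivial since then $p=0$), and that conditioning a positively associated measure on an increasing event of positive probability yields a measure that stochastically dominates it. This gives, for every fixed $\sigma'$, that $\mathbf P(\,\cdot\mid\sigma')$ stochastically dominates the $\textnormal{FK}(q)$ random cluster model on $\mathsf G$ with parameter $1-b$, and crucially the dominating measure does not depend on $\sigma'$. Averaging over $\sigma'$, together with the elementary fact that a mixture of measures each stochastically dominating a fixed measure $\nu$ again dominates $\nu$, then yields the desired domination for the marginal law of $\omega$. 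The dual statement for $\omega'$ follows verbatim after interchanging the roles of the primal and dual data, i.e.\ replacing $(\mathsf G, q, b)$ by $(\mathsf G^*, q', a)$ and using the analogue of~\eqref{eq:od1} obtained from the primal--dual symmetry of the model.

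The argument is short, so there is no serious obstacle; the only point that deserves care is the passage from the conditional domination to the unconditional one, where one must note that the dominating random cluster measure is literally the same for every value of $\sigma'$ (which is manifest from the explicit form of $\mathbf P(\,\cdot\mid\sigma')$ above), so that the mixture argument applies directly.
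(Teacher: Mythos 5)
Your proof is correct and takes essentially the same route as the paper: both condition on $\sigma'$ via \eqref{eq:od1}, recognize the resulting conditional law of $\omega$ as an $\textnormal{FK}(q)$ measure with parameter $1-b$ conditioned on the increasing event $\{\eta(\sigma')\subseteq\omega\}$, invoke positive association of the random cluster model to obtain stochastic domination, and then average over $\sigma'$. You spell out the mixture step a bit more explicitly than the paper (which simply says ``since this holds for every $\sigma'$''), but the substance is identical.
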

\begin{proof}
By~\eqref{eq:od1}, for fixed $ \sigma' \in Q'^{\mathsf U}$, we have for all $\omega$ with $\eta(\sigma')\subseteq \omega$,
\begin{align} \label{eq:again}
\mathbf P (\omega,\sigma')& \propto  \big(\tfrac{a}{1-b}\big)^{|\eta(\sigma')|} q^{k(\omega)}  (1-b)^{|\omega|} b^{|\mathsf E\setminus \omega| } .
\end{align}
This means that conditioned on $\sigma'$, $\omega$ is 
distributed as the $\textnormal{FK}(q)$ random cluster model on $\mathsf G$ conditioned on the event that all edges in $\eta(\sigma')$ are open.
By strong positive association of the random cluster model (see e.g.~\cite{RC}), this law stochastically dominates the corresponding random cluster model on $\mathsf G$ with no restriction on the state of the edges in $\eta(\sigma')$.
Since this holds for every~$\sigma'$, the proof is completed.
\end{proof}

\begin{corollary} Consider $\mathbf P_{\Lambda_n}$ with either $a<(\sqrt{q'}+1)^{-1}$ or $b<(\sqrt{q}+1)^{-1}$. 
Let $h'_n$ be the height function on the faces of $\Lambda_n$ with height zero 
assigned to the external face $u_{\infty}$, and let $u_0$ be a fixed face next to the origin of $\mathbb Z^2$.
Then, there exists $C=C(q,q',a,b) < \infty$ such that 
\begin{align} 
  \mathbf{Var}_{\Lambda_n} [h'_n(u_0)] \leq C
\end{align}
for all $n$.
\end{corollary}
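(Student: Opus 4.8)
The plan is to deduce the bound from Proposition~\ref{prop:uptoconst}, applied to the two faces $u_1=u_{\infty}$ and $u_2=u_0$, so that $dh'=h'_n(u_0)$ and it suffices to bound $\mathbf E_{\Lambda_n}[N_{\neq 0}]$ uniformly in $n$, where $N_{\neq 0}=N_{\neq 0}(u_0,u_{\infty})$ counts the clusters $\mathcal C$ of $\omega$ with $d\sigma'_{\mathcal C}(u_0,u_{\infty})\neq 0$. In each of the two regimes I would bound $N_{\neq 0}$ by the number of ``circuits around $u_0$'' in an auxiliary bond configuration which, by Lemma~\ref{lem:comparison} and planar duality of the random cluster model, is stochastically dominated by a \emph{subcritical} $\textnormal{FK}$ random cluster model; for such a model a Peierls-type estimate based on exponential decay of connectivities (sharpness of the phase transition, \cite{BefDum}) bounds the expected number of such circuits independently of $n$.

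In the case $b<(\sqrt q+1)^{-1}$ I would use the first assertion of Remark~\ref{rem:ceta}: since $d\sigma'_{\mathcal C}\neq 0$ forces $\mathcal C$ to disconnect $u_0$ from $u_{\infty}$, we have $N_{\neq 0}\leq N''$, the number of clusters of $\omega$ surrounding $u_0$. Ordering these clusters from innermost to outermost and using that a connected primal subgraph surrounding $u_0$ cannot be crossed by a path in $\omega^{\dagger}=\mathsf E^*\setminus\omega^*$, between two consecutive ones there is a circuit of $\omega^{\dagger}$ around $u_0$, and these circuits lie in distinct clusters of $\omega^{\dagger}$; hence $N''\leq 1+N'''$, where $N'''$ is the number of clusters of $\omega^{\dagger}$ surrounding $u_0$. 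Now $N'''$ is a \emph{decreasing} function of $\omega$, so by the stochastic domination $\omega\succeq\textnormal{FK}(q)$ on $\Lambda_n$ at $p=1-b$ from Lemma~\ref{lem:comparison}, $\mathbf E_{\Lambda_n}[N''']$ is at most the expected number of clusters surrounding $u_0$ in the dual of that $\textnormal{FK}(q)$ configuration. By planar duality of the random cluster model (e.g.\ \cite{RC}), this dual configuration is the $\textnormal{FK}(q)$ model on $\Lambda_n^*$ with wired boundary at $u_{\infty}$ and dual parameter $p^*$ determined by $\tfrac{p^*}{1-p^*}\cdot\tfrac{1-b}{b}=q$; since $b<(\sqrt q+1)^{-1}$ is precisely the condition $1-b>p_c(q)=\tfrac{\sqrt q}{1+\sqrt q}$, and $p\mapsto p^*$ is decreasing with fixed point $p_c(q)$, this model is subcritical. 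Finally, every cluster surrounding $u_0$ contains two vertices on two opposite rays from $u_0$, at lattice distances $k$ and $l$ say, connected inside the cluster; distinct surrounding clusters being vertex-disjoint, $N'''$ is bounded by the number of such pairs, so $\mathbf E_{\Lambda_n}[N''']\leq 1+\sum_{k,l\geq 1}Ce^{-c(k+l)}<\infty$ uniformly in $n$ by exponential decay of connectivities in the subcritical regime (the far-away wired vertex $u_{\infty}$ being harmless, as bulk connectivities still decay exponentially).

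The case $a<(\sqrt{q'}+1)^{-1}$ I would treat analogously via the second assertion of Remark~\ref{rem:ceta}: if $d\sigma'_{\mathcal C}\neq 0$ then $\mathcal C$ contains a connected component of $\eta(\sigma')$ surrounding $u_0$, so $N_{\neq 0}$ is at most the number of components of $\eta(\sigma')$ surrounding $u_0$. A short check of the percolation coupling shows that $\eta(\sigma')^*\cap\omega'=\emptyset$ (in step~\eqref{step2} a dual edge outside $\eta(\sigma)$ joins $\omega'$ only when its primal edge is outside $\eta(\sigma')$, while $\eta(\sigma)^*\cap\eta(\sigma')=\emptyset$ by \eqref{def:sigma}), hence $\eta(\sigma')\subseteq\omega'^{\dagger}:=\mathsf E\setminus\omega'^*$. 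The circuits around $u_0$ carried by distinct components of $\eta(\sigma')$ are edge-disjoint and contained in $\omega'^{\dagger}$, so their number is at most the number of edges on a fixed ray from $u_0$ that belong to $\omega'^{\dagger}$ and lie on a circuit of $\omega'^{\dagger}$ around $u_0$; lying on such a circuit forces a connection in $\omega'^{\dagger}$ to the opposite ray, so this count is bounded by an \emph{increasing} function of $\omega'^{\dagger}$. By the dual part of Lemma~\ref{lem:comparison}, $\omega'$ dominates $\textnormal{FK}(q')$ on $\Lambda_n^*$ at $p=1-a$, hence $\omega'^{\dagger}$ is dominated by its dual, the $\textnormal{FK}(q')$ model on $\Lambda_n$ with wired boundary and dual parameter $p^*<p_c(q')$, the last inequality being equivalent to $a<(\sqrt{q'}+1)^{-1}$. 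Exponential decay of connectivities for this subcritical model again gives $\mathbf E_{\Lambda_n}[N_{\neq 0}]\leq\sum_{k,l\geq 1}Ce^{-c(k+l)}<\infty$ uniformly in $n$.

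Combining the two cases, $\mathbf E_{\Lambda_n}[N_{\neq 0}]\leq C_0<\infty$ uniformly in $n$, and Proposition~\ref{prop:uptoconst} then yields $\mathbf{Var}_{\Lambda_n}[h'_n(u_0)]\leq C^2\,\mathbf E[\sigma_0^2]\,C_0$, finite and independent of $n$. I expect the main difficulty to be the topological bookkeeping rather than the probabilistic input: turning the spin-defined count $N_{\neq 0}$ into a quantity monotone in a percolation configuration one controls — the ``barrier'' step from clusters of $\omega$ to circuits of $\omega^{\dagger}$ in the first regime, and the inclusion $\eta(\sigma')\subseteq\omega'^{\dagger}$ together with edge-disjointness of nested circuits in the second — and then invoking subcriticality of the \emph{dual} random cluster model while correctly handling the special wired vertex $u_{\infty}$ of $\Lambda_n^*$.
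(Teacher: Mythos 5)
Your high-level plan — reduce to $\mathbf E_{\Lambda_n}[N_{\neq 0}]$ via Proposition~\ref{prop:uptoconst}, pass to a subcritical random cluster model via Lemma~\ref{lem:comparison} and duality, then close with a Peierls sum — is the paper's plan, and your treatment of the case $a<(\sqrt{q'}+1)^{-1}$ (via the second assertion of Remark~\ref{rem:ceta}, the inclusion $\eta(\sigma')\subseteq(\omega')^{\dagger}$, and exponential decay for the dual) matches the paper's Case~I. However, two steps in the $b<(\sqrt q+1)^{-1}$ case are not sound as written.

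First, the assertion that $N'''$, the number of clusters of $\omega^{\dagger}$ surrounding $u_0$, is a decreasing function of $\omega$ is false, and you rely on it to pass from $\mathbf P_{\Lambda_n}$ to the dominating $\textnormal{FK}(q)$ model. A counterexample: if a single surrounding cluster of $\omega^{\dagger}$ consists of two concentric circuits around $u_0$ joined by one bridge, then opening the primal edge dual to that bridge (increasing $\omega$) splits the cluster into two surrounding clusters, so $N'''$ strictly increases. The correct move — which you in fact write out at the end of the paragraph — is to bound $N'''$ first by a Peierls sum of connectivity indicators such as $\mathbf 1\{(-k,-k)^*\con{\omega^{\dagger}}(l,l)^*\}$; each of \emph{those} is decreasing in $\omega$, so stochastic domination may be applied termwise. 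As written, the intermediate transfer step is invalid.

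Second, the dismissal of the vertex $u_{\infty}$ ("far-away wired vertex $u_\infty$ being harmless") hides the real difficulty. In the $b$-small case, $\omega^{\dagger}$ is dominated by a subcritical FK model on $\Lambda_n^*$, which contains the high-degree vertex $u_{\infty}$ and is therefore \emph{not} a subgraph of $(\mathbb Z^2)^*$; the domain-monotonicity used in the chain of inequalities \eqref{eq:peierls} does not apply directly, and connections through $u_{\infty}$ must be dealt with. This is exactly why the paper's Case~II conditions on the outermost $\eta(\sigma')$-circuit $\eta_0$ surrounding $u_0$: given $\eta_0$ and the configuration outside it, the relevant surrounding clusters lie in the contracted domain $\Lambda_n^{\eta_0}$, whose dual $(\Lambda_n^{\eta_0})^*$ is a proper subgraph of $(\mathbb Z^2)^*$ with free boundary conditions, and the Peierls estimate \eqref{eq:peierls} then goes through. (Compare also Case~III of Theorem~\ref{cor:twobeh}, where the analogous issue is handled by conditioning on the cluster of $u_{\infty}$.) Your argument needs one of these devices, or a genuine two-point estimate controlling the event that both ray vertices connect to $u_\infty$, before the uniform bound on $\mathbf E_{\Lambda_n}[N''']$ can be claimed.
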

\begin{proof}
{\bf Case I}: $a<(\sqrt{q'}+1)^{-1}$.
By Lemma~\ref{lem:comparison}, the explicit value of the critical parameter~\eqref{eq:BefDum}, and duality in the random cluster model, $(\omega')^{\dagger}$ is stochastically dominated by a 
subcritical $\textnormal{FK}(q')$ random cluster model on $\Lambda_n$. In particular the connectivity probabilities of the former process are bounded above by those of the latter.
To finish the proof in this case it is therefore enough to repeat the reasoning as in \eqref{eq:peierls}.

{\bf Case II}: $b<(\sqrt{q}+1)^{-1}$. 
Recall the upper bound from Proposition~\ref{prop:uptoconst}. 
By Remark~\ref{rem:ceta}, every cluster $\mathcal C$ of $\omega$ which satisfies $d\sigma'_{\mathcal C}(u_0,u_{\infty})\neq 0$ (and therefore contributes to $N_{\neq 0}(u_0,u_\infty)$) must necessarily
contain a connected component of $\eta(\sigma')$ that surrounds $u_0$. 
We can therefore assume that there exists an outermost cycle of edges in $\eta(\sigma')$, denoted by $\eta_0$, that surrounds $u_0$. We define $\Lambda_n^{\eta_0}$ to be the subgraph of 
$\Lambda_n$ contained within $\eta_0$ (including $\eta_0$) with all vertices belonging to~$\eta_0$ identified with one another. 
To conclude the proof it is hence enough to uniformly bound the expected number of clusters of $\omega$ that surround $u_0$ and that are contained in $\Lambda_n^{\eta_0}$.
Again, for topological reasons, this is the same as bounding the expected number of clusters that surround $u_0$ of the dual configuration~$\omega^{\dagger}$ restricted to the dual graph~$(\Lambda_n^{\eta_0})^*$.

To this end, note that by formula \eqref{eq:again} and arguments as in the proof of Lemma~\ref{lem:comparison}, conditioned on $\eta_0$ and the configuration $(\omega, \sigma')$ outside~$\eta_0$, the rest of the configuration $\omega$
stochastically dominates a supercritical random cluster model on $\Lambda_n^{\eta_0}$. Therefore, by duality in the random cluster model, $\omega^{\dagger}$ restricted to $(\Lambda_n^{\eta_0})^*$ is stochastically dominated 
by a subcritical random cluster model on $(\Lambda_n^{\eta_0})^*$. Since $(\Lambda_n^{\eta_0})^*$ is a proper subgraph of the dual square lattice $(\mathbb Z^2)^*$, we can again use the reasoning from \eqref{eq:peierls} to finish the proof.
\end{proof}

\subsection*{The self-dual model with $q=q'$ and $a=b$}
For $q=q'=2$, the question of delocalization at $a=b> 1/2$ is still open for a large range of $a$. 
One of the difficulties in studying this regime is that, e.g.\ the classical Baxter--Kelland--Wu coupling~\cite{BKW} between the six-vertex model
and the random cluster model is no longer a probabilistic construction but involves complex-valued measures.
In our case this is the regime where the marginal of $\mathbf P$ on $\omega$ is not positively associated which is a major technical obstacle. 
We say that a bond percolation process on an infinite graph \emph{percolates}
if it contains an infinite cluster.
The main contribution of this section is a partial result saying that no percolation of $\omega$ is a sufficient condition for delocalization in the self-dual model.

To make full use of translation invariance, we will consider the model ${\mathbf  P}_{\mathbb T_n}$ defined on the square lattice torus $\mathbb T_n$ of size $n\times n$.
Self-duality implies that $\omega$ shifted by $(\tfrac 12,\tfrac 12)$ (so that $\mathbb T_n$ becomes $\mathbb T^*_n  \simeq \mathbb T_n$) has the 
same distribution under $\mathbf P_{\mathbb T_n}$ as~$\omega'$. This property clearly carries over to any subsequential limit $\mathbf P_{\mathbb Z^2} =\lim_{k\to \infty}{\mathbf  P}_{\mathbb T_{n_k}}$.
We talk about subsequential limits here since there is no stochastic monotonicity of the model that would guarantee the uniqueness of the limit.
In our last theorem we show that if $\omega$ does not percolate $\mathbf P_{\mathbb Z^2}$-almost surely, 
then $\omega$ necessarily contains infinitely many clusters surrounding the origin and hence the associated height function delocalizes.
The first implication is immediate by self-duality whenever $\omega^{\dagger}\subseteq\omega'$, but this property only holds for $a=b\leq1/2$. To get the result in full generality 
we use the essential fact that $\eta(\sigma)\subseteq \omega'$, which in particular implies that if $\eta(\sigma)$ percolates, then so does~$ \omega'$. 

\begin{theorem} \label{thm:selfdual} Consider a subsequential limit $\mathbf P_{\mathbb Z^2} =\lim_{k\to \infty}{\mathbf  P}_{\mathbb T_{n_k}}$ of the self-dual model with $q=q'$ and $a=b>1/2$, and
assume that 
\begin{align} \label{cond:perco}
\mathbf P_{\mathbb Z^2} (\omega \textnormal{ percolates}) =0.
\end{align}
Then 
\begin{align} \label{eq:infinite}
\mathbf P_{\mathbb Z^2} (\textnormal{infinitely many clusters of }\omega \textnormal{ surround the origin}) =1.
\end{align}
and
\begin{align} \label{eq:divh}
\lim_{ \ |u_1-u_2|\to \infty } \lim_{k\to \infty} \mathbf{Var}_{\mathbb T_{n_k}} [h'(u_1)-h'(u_2)] = \infty,
\end{align}
where the height increment $h'(u_1)-h'(u_2)$ is computed along one of the shortest paths from $u_1$ to $u_2$ in the dual torus $\mathbb T_{n_k}^*$.
\end{theorem}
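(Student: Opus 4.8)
The plan is to prove \eqref{eq:infinite} first and then read off \eqref{eq:divh} from it using the machinery of Section~\ref{sec:homega}. For the reduction, note that self-duality of the model on $\mathbb T_n$ shows that $\omega'$, shifted by $(\tfrac12,\tfrac12)$, has the same law as $\omega$, and this symmetry passes to any subsequential limit; hence \eqref{eq:infinite}, once proved, also holds with $\omega$ replaced by $\omega'$, i.e.\ $\mathbf P_{\mathbb Z^2}$-a.s.\ infinitely many clusters of $\omega'$ surround the origin. Fix $u_1$ next to the origin. Since $a+b=2a>1$, Propositions~\ref{prop:uptoconst} and~\ref{prop:hfclusters} apply and give
\[
\mathbf{Var}_{\mathbb T_{n_k}}[h'(u_1)-h'(u_2)]\ \geq\ m(2)\,\bigl(1-\tfrac1{q'}\bigr)\bigl(\mathbf E_{\mathbb T_{n_k}}[N'(u_1,u_2)]-1\bigr),
\]
with $m(2)=\mathbf E[\sigma_0^2]>0$ for $q\geq2$, and where $N'(u_1,u_2)$ counts the clusters of $\omega'$ that disconnect $u_1$ from $u_2$ along the shortest path defining the increment. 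One bounds $N'(u_1,u_2)$ below by the number of $\omega'$-clusters that surround $u_1$ and are contained in the ball of radius $\tfrac12|u_1-u_2|$ about $u_1$, a bounded local observable; passing to the limit $k\to\infty$ by weak convergence, then letting $|u_1-u_2|\to\infty$ and using monotone convergence together with \eqref{eq:infinite} for $\omega'$, yields \eqref{eq:divh}.

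For \eqref{eq:infinite} itself, observe first that by the self-duality above, condition \eqref{cond:perco} holds for $\omega'$ as well; since $\eta(\sigma)\subseteq\omega'$ and $\eta(\sigma')\subseteq\omega$ almost surely, this forces $\mathbf P_{\mathbb Z^2}(\eta(\sigma)\text{ percolates})=\mathbf P_{\mathbb Z^2}(\eta(\sigma')\text{ percolates})=0$. The planar-duality input I would use is: if a subgraph $X$ of a planar lattice has no infinite cluster then, choosing a nested sequence of disjoint annuli around the origin none of which $X$ crosses, its dual complement $X^{\dagger}$ has infinitely many circuits around the origin. Since $\eta(\sigma')\subseteq\omega$, to get \eqref{eq:infinite} it suffices to produce infinitely many circuits of $\eta(\sigma')$ around the origin, and by this input it suffices that $\eta(\sigma')^{\dagger}$ has no infinite cluster. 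But $\eta(\sigma')^{\dagger}$ is exactly the set of dual edges whose two endpoint-faces carry equal $\sigma'$-value, so its clusters are the monochromatic clusters of $\sigma'$; thus the goal becomes: $\sigma'$ has no infinite monochromatic cluster. If it had one, $\mathcal M$, and the complement of $\mathcal M$ had an infinite component, the set of $\eta(\sigma')$-edges separating them would be an infinite connected set, contradicting the non-percolation of $\eta(\sigma')$; so $\sigma'$ would be constant off a union of finite ``islands'', a degenerate case treated separately. Finally, infinitely many disjoint circuits around the origin in the non-percolating configuration $\omega$ must lie in infinitely many distinct clusters, which is \eqref{eq:infinite}.

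Two points need genuine care. The first is the planar-duality bookkeeping on annuli and its transfer between $\mathbb T_{n_k}$ and its local limit --- in particular the fact that a cluster wrapping a torus of diverging size need not survive in the limit, so that \eqref{cond:perco} is compatible with crossing probabilities bounded away from $0$ as $k\to\infty$; this is why the naive argument comparing crossings directly cannot work. The second, and the main obstacle, is ruling out $\mathbf P_{\mathbb Z^2}$-a.s.\ the degenerate ``ordered'' configurations in which $\sigma'$ (or, by symmetry, $\sigma$) is constant off a union of finite islands. This is precisely the regime where one must exploit $a>\tfrac12$ --- so that, conditionally on $\sigma'$, the configuration $\omega$ is a \emph{subcritical} $\mathrm{FK}(q)$ model, $1-b<\tfrac12<p_c(q)$ --- together with the essential inclusion $\eta(\sigma)\subseteq\omega'$ rather than merely $\omega^{\dagger}\subseteq\omega'$; the latter would make \eqref{eq:infinite} immediate when $a=b\leq\tfrac12$ (then $(\omega')^{\dagger}\subseteq\omega$ and non-percolation of $\omega'$ alone produces the circuits), but it fails for $a=b>\tfrac12$.
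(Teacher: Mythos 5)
Your reduction of \eqref{eq:divh} to \eqref{eq:infinite} via Propositions~\ref{prop:uptoconst} and~\ref{prop:hfclusters} and weak convergence of a bounded local count is essentially the paper's argument, and is fine modulo the torus bookkeeping you flag.

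The proof of \eqref{eq:infinite}, however, has a genuine gap that you yourself identify but do not close. Your planar-duality framing is correct: it suffices to show that $\sigma'$ (equivalently $\sigma$, by self-duality) has no infinite monochromatic cluster, since finiteness of the $\eta(\sigma')^{\dagger}$-clusters yields infinitely many $\eta(\sigma')$-circuits around the origin, which, because $\eta(\sigma')\subseteq\omega$ and $\omega$ has only finite clusters, forces infinitely many distinct $\omega$-clusters surrounding the origin. The case where the complement of an infinite monochromatic cluster also has an infinite component is ruled out because that would produce an infinite component of $\eta(\sigma')\subseteq\omega$. But then you arrive at the ``degenerate'' case --- a single infinite monochromatic $\sigma'$-cluster whose complement is a union of finite islands --- and simply declare it ``treated separately,'' gesturing towards the subcriticality of $\mathrm{FK}(q)$ at $p=1-b$ and the inclusion $\eta(\sigma)\subseteq\omega'$ without giving an argument. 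This is precisely the crux, and the hint as given does not obviously lead anywhere: the relevant conditional law of $\omega$ is an $\mathrm{FK}(q)$ model \emph{conditioned} on $\eta(\sigma')\subseteq\omega$, and in the ordered scenario this conditioning could be substantial.

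The paper resolves this by a conditional $0$--$1$ law plus symmetry argument that your proposal lacks. First, Lemma~\ref{lem:coin} shows that condition \eqref{cond:perco} lets the finite-volume Edwards--Sokal coupling (property \eqref{eq:step1} of Proposition~\ref{prop:coupling}) pass to the infinite-volume limit: conditionally on $\omega$, the $\sigma$-spins are assigned independently and uniformly to the (a.s.\ finite) clusters of $\omega$. The event $\textnormal{Perc}(i)$ that colour $i$ has an infinite monochromatic component is then a tail event for these independent assignments, so Kolmogorov's $0$--$1$ law makes its conditional probability $0$ or $1$. By the exchangeability of the colours under the conditional law, this value is the same for every $i$; hence $\mathbf P_{\mathbb Z^2}(\bigcup_i\textnormal{Perc}(i))=\mathbf P_{\mathbb Z^2}(\bigcap_i\textnormal{Perc}(i))$. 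But on $\textnormal{Perc}(i)\cap\textnormal{Perc}(j)$ with $i\neq j$ there is an infinite interface in $\eta(\sigma)\subseteq\omega'$, contradicting \eqref{cond:perco} for $\omega'$ (via self-duality). Thus all $\textnormal{Perc}(i)$ are null, and the ``single ordered cluster'' scenario is impossible because a unique infinite monochromatic cluster would have to have a deterministic colour, which the symmetry forbids. This conditional $0$--$1$ plus exchangeability step is the missing idea; without it your proposal does not establish \eqref{eq:infinite}.
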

We note that the particular choice of the path in the statement above is not essential to the argument.

\begin{remark}
Since $a=b> \tfrac 12$, we have that
$\omega^* \cap \omega' =\emptyset$. 
One therefore expects \eqref{cond:perco} to hold true if e.g.\ one can establish ergodicity of the marginal of $\mathbf P_{\mathbb Z^2}$ onto~$\omega$.
Indeed, if this were true, then by duality, positive probability of percolation would imply coexistence of disjoint infinite clusters of $\omega$ and $\omega'$, which one does not expect to happen.
However, it is not clear why the infinite volume limit should be ergodic, and indeed it follows from the results of~\cite{Disc} and Proposition~\ref{prop:RCtorus} that this is not true in the boundary case $a=1/2$ and $q>2$.
\end{remark}

To prove the theorem, we will need the following lemma. It is highly likely that this result exists in the literature but we could not find a proper reference, and thus we give a proof for completeness.
\begin{lemma} \label{lem:coin}
Condition \eqref{cond:perco} guarantees that property \eqref{eq:step1} from 
Proposition~\ref{prop:coupling}, which says that under $\mathbf P_{\mathbb T_{n_k}}$ the spins $\sigma$ are sampled by independently chosing a spin for each cluster of $\omega$, carries over into the infinite volume limit $\mathbf P_{\mathbb Z^2}$.
\end{lemma}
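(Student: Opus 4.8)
The plan is to transfer the finite-volume statement of Proposition~\ref{prop:coupling}\eqref{eq:step1} to the limiting measure by \emph{localizing} the partition of a fixed finite set of vertices into $\omega$-clusters, and then invoking~\eqref{cond:perco} to see that this localization is harmless as the localization scale tends to infinity. Fix vertices $v_1,\dots,v_m\in\mathsf V$, a bounded function $g$ of $\big(\sigma(v_1),\dots,\sigma(v_m)\big)$, and a bounded cylinder function $F$ of $\omega$. For every $k$ large enough that $\Lambda_R\subseteq\mathbb T_{n_k}$, Proposition~\ref{prop:coupling}\eqref{eq:step1} --- whose proof, namely that the conditional weight~\eqref{eq:condweight} is independent of $\sigma$, goes through verbatim on the finite graph $\mathbb T_{n_k}$ --- gives $\mathbf E_{\mathbb T_{n_k}}[\,g(\sigma)\mid\omega\,]=\Phi(\pi(\omega))$, where $\pi(\omega)$ denotes the partition of $\{v_1,\dots,v_m\}$ into $\omega$-clusters and $\Phi$ is the average of $g$ under an independent uniform spin per block; note that $\Phi$, viewed as a function of partitions of $\{v_1,\dots,v_m\}$, does not depend on the volume. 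Multiplying by $F(\omega)\mathbf 1_{A_R}$ and taking expectations gives
\[
\mathbf E_{\mathbb T_{n_k}}\big[F(\omega)\,g(\sigma)\,\mathbf 1_{A_R}\big]=\mathbf E_{\mathbb T_{n_k}}\big[F(\omega)\,\Phi(\pi(\omega))\,\mathbf 1_{A_R}\big],
\]
where $A_R$ is the event that the $\omega$-cluster of each $v_i$ is contained in the box $\Lambda_R$ centred at the origin.

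The key point is that $A_R$ is a cylinder event (it is measurable with respect to the edges incident to $\Lambda_R$), and that on $A_R$ the partition $\pi(\omega)$, and hence $\Phi(\pi(\omega))\mathbf 1_{A_R}$, is also a cylinder function of $\omega$; therefore both sides of the last display are expectations of bounded \emph{continuous} functions of $(\omega,\sigma)$. Since the joint law of $(\omega,\sigma)$ under $\mathbf P_{\mathbb T_{n_k}}$ converges weakly to its law under $\mathbf P_{\mathbb Z^2}$, passing to the limit along $(n_k)$ gives
\[
\mathbf E_{\mathbb Z^2}\big[F(\omega)\,g(\sigma)\,\mathbf 1_{A_R}\big]=\mathbf E_{\mathbb Z^2}\big[F(\omega)\,\Phi(\pi(\omega))\,\mathbf 1_{A_R}\big].
\]
Now~\eqref{cond:perco} is used: under it every cluster of $\omega$ is $\mathbf P_{\mathbb Z^2}$-a.s.\ finite, so the union of the clusters of $v_1,\dots,v_m$ is a.s.\ finite and hence eventually contained in $\Lambda_R$; thus $\mathbf P_{\mathbb Z^2}(A_R)\to1$ as $R\to\infty$. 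As $F$, $g$ and $\Phi$ are bounded, letting $R\to\infty$ yields $\mathbf E_{\mathbb Z^2}[F(\omega)g(\sigma)]=\mathbf E_{\mathbb Z^2}[F(\omega)\Phi(\pi(\omega))]$ for every bounded cylinder function $F$ of $\omega$. A routine monotone-class argument extends this to all bounded $\omega$-measurable $F$, whence $\mathbf E_{\mathbb Z^2}[\,g(\sigma)\mid\omega\,]=\Phi(\pi(\omega))$ almost surely; since $g$ and $v_1,\dots,v_m$ were arbitrary, this is precisely the claim that, conditionally on $\omega$, the spins $\sigma$ are obtained by an independent uniform choice of a spin for each (a.s.\ finite) cluster of $\omega$, i.e.\ property~\eqref{eq:step1} persists under $\mathbf P_{\mathbb Z^2}$.

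The only genuine obstacle is that connectivity is not local in infinite volume: neither $\pi(\omega)$ nor $\Phi(\pi(\omega))$ is a cylinder function of $\omega$, so Proposition~\ref{prop:coupling}\eqref{eq:step1} cannot be pushed through the weak limit as it stands. Restricting to the event $A_R$ turns all the relevant quantities into cylinder functions, and condition~\eqref{cond:perco} is exactly what ensures $\mathbf P_{\mathbb Z^2}(A_R)\to1$, so that nothing is lost in the limit $R\to\infty$. The remaining ingredients --- the volume-independence of $\Phi$ and the monotone-class upgrade --- are routine.
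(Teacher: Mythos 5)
Your proof is correct and follows essentially the same approach as the paper: localize the relevant cluster structure to a finite box (your $A_R$, the paper's $E_{l,L}$), use~\eqref{cond:perco} to make this event have probability close to $1$, and transfer the finite-volume conditional law~\eqref{eq:step1} through the limit using the convergence of finite-dimensional marginals. The only difference is presentational --- you argue with cylinder test functions and weak convergence, whereas the paper phrases the same step via total-variation convergence on a fixed box together with an explicit coupling; these are interchangeable.
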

\begin{proof}
Let $\mathbf S$ be the law on spin configurations on the vertices of $\mathbb Z^2$ coupled with $\mathbf P_{\mathbb Z^2}$ by independently assigning a spin from $Q$ to each cluster of $\omega$.
We need to prove that $\mathbf S$ is equal in distribution to the marginal of $\mathbf P_{\mathbb Z^2}$ on $\sigma$.
To this end, consider a fixed box $\Lambda_l$ and $\varepsilon>0$. 
For $L>l$, let $E_{l,L}$ be the event that \emph{no} cluster of $\omega$ connects $\Lambda_l$ to the boundary of $\Lambda_L$, 
and take $L$ so large that $\mathbf P_{\mathbb Z^2}(E_{l,L})\geq 1-\varepsilon$.
Such $L$ exists since by \eqref{cond:perco} there are only finite clusters almost surely.
Next, take $k$ so large that $n_k>L$ and the total variation distance between the law of $\mathbf P_{\mathbb T_{n_{k}}}$ and $\mathbf P_{\mathbb Z^2}$ restricted to $\Lambda_L$ is 
smaller than $\varepsilon$.
This is possible by weak convergence and since $\Lambda_L$ is fixed. Let $\mathbf P_1$ be the former and $\mathbf P_2$ the latter law, and denote by $(\omega_1,\sigma_1)$ and 
$(\omega_2,\sigma_2)$ the corresponding configurations restricted to $\Lambda_L$. By the classical property of the total variation distance, there exists a coupling $\mathbf Q$ of $\mathbf P_1$ and $\mathbf P_2$ satisfying 
$\mathbf Q(\omega_1= \omega_2, \sigma_1= \sigma_2)\geq 1-\varepsilon$, and hence also $\mathbf Q(\omega_1= \omega_2, \sigma_1= \sigma_2,E_{l,L} )\geq 1-2\varepsilon$.
Note that conditioned on the latter event, by property \eqref{eq:step1} from 
Proposition~\ref{prop:coupling}, $\mathbf S$ and $\sigma_1$ have the same law when restricted to $\Lambda_l$.
The last inequality therefore implies that the total variation distance between $\mathbf S$ and the marginal of $\mathbf P_{\mathbb Z^2}$ on $\sigma$ restricted to $\Lambda_l$ is smaller 
than $2\varepsilon$. Since $l$ and $\varepsilon$ were arbitrary, this ends the proof.
\end{proof}
We are now ready to prove our last theorem.
\begin{proof}[Proof of Theorem~\ref{thm:selfdual}]

For $i\in Q$, let $\textnormal{Perc}(i)$ be the event that the subgraph of $\mathbb Z^2$ induced by $\{ v: \sigma(v)=i\}$ contains an infinite connected component.
By Lemma~\ref{lem:coin} we know that $\sigma$ is distributed like an independent assignment of a spin to each cluster of $\omega$.
By \eqref{cond:perco} all clusters of $\omega$ are finite almost surely, and hence $\textnormal{Perc}(i)$ are tail events with respect to these independent spin assignments. 
Therefore by Kolomogorov's 0-1 law, conditioned on $\omega$, the probability of $\textnormal{Perc}(i)$ is either $0$ or $1$.
In the latter case, by symmetry under spin relabelling, we have that the probability of $\bigcap_{i\in Q}\textnormal{Perc}(i)$ is also $1$. Hence,
\begin{align}
\mathbf{P}_{\mathbb Z^2} \Big(\bigcup_{i\in Q}\textnormal{Perc}(i)\Big )&= \mathbf{E}_{\mathbb Z^2}\Big[\mathbf P_{\mathbb Z^2}\Big(\bigcup_{i\in Q}\textnormal{Perc}(i) \mid \omega \Big)\Big]\nonumber \\
&=  \mathbf{E}_{\mathbb Z^2}\Big[\mathbf P_{\mathbb Z^2}\Big(\bigcap_{i\in Q}\textnormal{Perc}(i)\mid \omega \Big)\Big]\nonumber \\
&= \mathbf{P}_{\mathbb Z^2} \Big(\bigcap_{i\in Q}\textnormal{Perc}(i)\Big ).\label{eq:perc}
\end{align}
Note that for topological reasons, on the event $\textnormal{Perc}(i) \cap \textnormal{Perc}(j)$ for $i\neq j$, there must be at least one infinite interface in $\eta(\sigma)$ 
separating two infinite components with different $\sigma$-spins. Since $\eta(\sigma)\subseteq \omega'$, we infer that on the event $\textnormal{Perc}(i) \cap \textnormal{Perc}(j)$, 
the configuration $\omega'$ percolates. By self-duality and~\eqref{cond:perco}, $\omega'$ does not percolate a.s., and therefore $\mathbf P_{\mathbb Z^2}(\textnormal{Perc}(i) \cap \textnormal{Perc}(j))=0$ 
for $i\neq j$. Hence by \eqref{eq:perc}, $\mathbf P_{\mathbb Z^2}(\textnormal{Perc}(i))=0$ for all $i\in Q$. 
We therefore conclude that there are infinitely many clusters of $\eta(\sigma)$, and hence also of $\eta(\sigma')$, surrounding the origin a.s. Since $\omega$ does not percolate and $\eta(\sigma')\subseteq \omega$ a.s., 
there must be infinitely many clusters of $\omega$  surrounding the origin a.s. which gives \eqref{eq:infinite}.

It is now enough to use~\eqref{eq:infinite} to deduce delocalization of the height function~\eqref{eq:divh}. The argument is analogous to the proof of the first part of Theorem~\ref{cor:twobeh}
but the required results from Sect.~\ref{sec:homega} need to be adjusted to the topology of the torus by also considering noncontractible clusters that may intersect the path along which the increment of the height function is computed. We leave the details to the reader.
\end{proof}

\bibliographystyle{amsplain}
\bibliography{iloop}
\end{document}